\documentclass[a4paper,reqno]{amsart}
\pagestyle{myheadings}
\usepackage{array}
\usepackage{siunitx}
\usepackage{enumerate,amsmath,amsthm,amsfonts}
\usepackage{subfigure}
\usepackage{hyperref}
\usepackage{url}
\usepackage{longtable}
\usepackage{amssymb}
\usepackage{braket,enumerate,bm}
\usepackage{graphicx,color}
\usepackage{graphpap}
\usepackage{latexsym}
\usepackage{float}
\usepackage{setspace}
\usepackage{lineno}   

\newtheorem{thm}{Theorem}[section]
\newtheorem{prop}[thm]{Proposition}
\newtheorem{lem}[thm]{Lemma}

\newtheorem{rem}[thm]{Remark}
\newtheorem{definition}[thm]{Definition}
\newtheorem{Quotation}[thm]{Quotation}
\newtheorem{obs}[thm]{Observation}

\DeclareMathOperator {\Exp}{E}

\DeclareMathOperator {\Le}{L}
\DeclareMathOperator {\K}{K}
\DeclareMathOperator {\rank}{rank}
\DeclareMathOperator {\Tr}{Tr}
\DeclareMathOperator {\Var}{Var}

\def\A{\mathbf{A}}
\def\B{\mathbf{B}}
\def\bR{\mathbf{R}}
\def\cas{\mathrel{\stackrel{a.s.}{\to}}}
\def\comment#1{}
\def\D{\mathbf{D}}
\def\E{\mathbf{E}}

\def\fra#1#2{{#1}/{#2}}
\def\G#1#2{G_{#1,#2}}
\def\iidsim{\mathrel{\stackrel{\mbox{\rm i.\@i.\@d.\@}}{\sim}}}
\def\meas{\mu}
\def\MPmeas#1#2{\mu_{#1,#2}}
\def\N{\mathrm{N}}

\DeclareMathOperator {\Prb}{P}
\newcommand{\R}{\mathbb R}
\def\P{\mathbf{P}}
\def\qtl#1{\left(#1\right)^-}

\def\P{\mathbf{P}}
\def\S{\mathbf{S}}
\def\U{\mathbf{U}}

\def\x{\mathbf{x}}
\def\I{\mathbf{1}}
\def\x{\bm{x}}
\def\X{\mathbf{X}}
\def\Y{\mathbf{Y}}
\def\M{\mathbf{M}}
\def\Rc{\mathbf{C}}

\pagestyle{myheadings}

\textwidth 28cc

\font\rs=cmss10.360pk
\font\rt=cmss9.360pk
\font\sd=cmcsc9.360pk

\markboth{{\small\sd \hfill Y. Akama and A. Husnaqilati \hfill}}
{{\small\rt \hfill A dichotomous behavior of GK criterion from ENP\hfill}}

\textheight 42cc

\parskip .5mm
\include {mak}
\parindent 2cc

\begin{document}
~\vspace{-16mm}

\oddsidemargin 16.5truemm
\evensidemargin 16.5truemm

\thispagestyle{plain}

\hspace{8.4cm}{\rs J. Indones. Math. Soc.}

\vspace{-0.25cc}

\hspace{7.5cm}{\scriptsize Vol. xx, No. xx (20xx), pp.~xx--xx.}\\
\vspace{1.2cc}

\vspace{1.5cc}

\begin{center}
{\Large\bf A DICHOTOMOUS BEHAVIOR OF GUTTMAN-KAISER CRITERION FROM EQUI-CORRELATED NORMAL POPULATION\\ 
\vspace{1.5cc}
{\large\sc Yohji Akama$^{1}$, Atina Husnaqilati$^{1,2}$}\\

\vspace{0.3 cm} {\small $^{1}$Department of Mathematics, Graduate School of Science, Tohoku
University, Aramaki, Aoba, Sendai, 980-8578, Japan, yoji.akama.e8@tohoku.ac.jp\\ 
$^{2}$Department of Mathematics, Faculty of Mathematics and Natural Science, Universitas Gadjah Mada, Sekip Utara Bulaksumur, Yogyakarta 55281, Indonesia, husqila@gmail.com}

\rule{0mm}{6mm}\renewcommand{\thefootnote}{}\footnotetext{\scriptsize
{\it 2000 Mathematics Subject Classification}: 60F15; 62H25
\\
{\rm Received: dd-mm-yyyy, accepted: dd-mm-yyyy.}
}
}

\vspace{1.5cc}

\parbox{24cc}{{\Small{\bf Abstract.}
We consider a $p$-dimensional, centered normal population such that all variables have a positive variance $\sigma^2$ and any correlation coefficient between different variables is a given nonnegative constant $\rho<1$. Suppose that both the sample size $n$ and population dimension $p$ tend to infinity with $p/n \to c>0$. 
We prove that the limiting spectral distribution of a sample correlation matrix is Mar\v{c}enko-Pastur distribution of index $c$ and scale parameter $1-\rho$.
By the limiting spectral distributions, we rigorously show the limiting behavior of widespread stopping rules Guttman-Kaiser criterion and cumulative-percentage-of-variation rule in PCA and EFA.
As a result, we establish the following dichotomous behavior of Guttman-Kaiser criterion when both $n$ and $p$ are large, but $p/n$ is small: (1) the criterion retains a small number of variables for $\rho>0$, as suggested by Kaiser, Humphreys, and Tucker [Kaiser, H. F. (1992). On Cliff's formula, the Kaiser-Guttman rule and the number of factors. \emph{Percept. Mot. Ski.} 74]; and
(2) the criterion retains $p/2$ variables for $\rho=0$, as in a simulation study [Yeomans, K. A. and Golder, P. A. (1982). The Guttman-Kaiser criterion as a predictor of the number of common factors.  \emph{J. Royal Stat. Soc. Series D.} 31(3)].
}}
\end{center}

\vspace{0.25cc}
\parbox{24cc}{\Small {\it Key words and Phrases}: Guttman-Kaiser criterion; equi-correlated normal population; Mar\v{c}enko-Pastur distribution; sample correlation matrices; limiting spectral distribution.
}

\vspace{1.5cc}

\vspace{1.5cc}

\section{Introduction} 
For large datasets, it is necessary to reduce the dimensionality  from $n$ observations on $p$ variables. Numerous techniques have been developed, such as principal component analysis (PCA)~\cite{jackson,J} and exploratory factor analysis (EFA)~\cite{harman,mulaik,lean}
for this goal. 
PCA and EFA discover relationships among a set of potentially associated variables and merge them into smaller groups called as `principal components' (in PCA) or `factors' (in EFA)~\cite{jackson,mulaik}. The number of non-trivial principal components or factors is usually suggested by a \emph{stopping rule}~\cite[p.~41]{jackson}.  

A well-known stopping rule in PCA and EFA is \emph{Guttman-Kaiser criterion}~\cite{guttman,ka1,zwick2}.
This rule may be the most widely used rule to retain principal components and factors~\cite{lean} because of its clearness, ease of implementation, and default stopping rule in statistical software such as SPSS and SAS. However, nearly four decades ago, for independent normal random variables, Yeomans-Golder~\cite{Criterion} observed that Guttman-Kaiser criterion retains a large number of factors at most $p/2$ by a simulation study. Moreover, for dependent variables, H.~F.~Kaiser, who introduced Guttman-Kaiser criterion, adverted to a dichotomous behavior of the criterion by reporting the following experience of experts of EFA:
\begin{Quotation}[\protect{\cite{kai}}]\label{quotation}\rm ...
Humphreys (personal communication, 1984) asserts that, when the number $p$ of attributes is large and the ``average'' intercorrelation is small, the Kaiser-Guttman rule will overfactor. Tucker (personal communication, 1984) asserts that, when the number of attributes $p$ is small and the structure of the attributes is particularly clear, the Kaiser-Guttman rule will underfactor. ...
\end{Quotation}
\noindent
Here, ``overfactor''~(``underfactor'', resp.) means ``overestimate''~(``underestimate'', resp.) the number of factors in the factor model. According to Kaiser \cite{kai2}, `the ``average'' intercorrelation' corresponds to a positive constant $\rho$ of the following correlation matrix
\begin{align*}
\Rc(\rho)=\begin{bmatrix}
1 & \rho & \cdots & \rho\\
\rho & 1 & \cdots & \rho\\
\vdots & \vdots & & \vdots\\
\rho &\rho &\cdots & 1
\end{bmatrix}\in \mathbb{R}^{p\times p}.
\end{align*}

In statistics, Kaiser's observation in Quotation~\ref{quotation} and the simulation study of Yeomans-Golder~\cite{Criterion} showed that Guttman-Kaiser criterion has a considerable risk of overfactor and underfactor. 
Consequently, applying this criterion may significantly impact the interpretation in PCA and EFA~\cite{Criterion,zwick2}. However, no systematic analysis of Guttman-Kaiser criterion has been done for datasets that have both large $n$ and $p$ large. Because of the wide use of Guttman-Kaiser criterion, it is necessary to verify Kaiser's observation in Quotation~\ref{quotation} and a simulation study of Yeomans-Golder~\cite{Criterion}. Thus, we study the behavior of Guttman-Kaiser criterion to detect the number of components or factors regarding $p/n$ and $\rho$ (the correlation coefficient between variables).

In mathematics, the most aforementioned works assume that the $p$ entries of variables are independent to show the behavior of the largest entries of the sample covariance matrices and the sample correlation matrices. Fan-Jiang~\cite{fan} considered $p$-dimensional population such that all the $p$ variables obey normal distributions where all the correlation coefficients between different variables are $\rho$ $(0\le \rho<1)$. We call this population an \emph{equi-correlated normal population} (ENP). Here, we consider how an ENP impacts the \emph{limiting spectral distribution} (LSD) of the sample covariance and the sample correlation matrices.

In this paper, we  expound on the behavior of Guttman-Kaiser criterion regarding $p/n$ and $\rho$ as $n,p\to\infty$ by verifying the impact of $\rho$ on the LSDs of the sample covariance matrices and the sample correlation matrices. 
First of all, we precisely compute and illustrate $q/p$ in $n,p\to\infty$, $p/n\to c>0$ by the limit of \emph{empirical spectral distribution} (ESD) from random matrix theory~\cite[p. 5]{bai4}. Here, $q$ is the number of principal components or factors that Guttman-Kaiser criterion preserves. For a real symmetric matrix $\M$ of order $p$, the ESD of $\M$ is, by definition, a function
 \begin{align*}
   F^\M(x)=\dfrac{1}{p}\#\Set{1\le i\le p|\lambda_i\le x}
 \end{align*}
  where $\lambda_1\ge\lambda_2\ge\cdots\ge\lambda_p$ are the eigenvalues of $\M$.
Mathematically speaking, the proportion of eigenvalues rejected by Guttman-Kaiser criterion is $F^\S(p^{-1}\Tr \S)$ or the sample correlation matrix $F^\bR(1)$ where $\S$ denotes sample covariance matrix and $\bR$ denotes sample correlation matrix. Here, the LSD is the limit of ESD in $n,p\to\infty,\ p/n\to c>0$. 
From an ENP, we prove that 
\begin{enumerate}
    \item if the population mean and variance are, moreover, $\bm {0}$ and $\sigma^2$, then the LSD of the sample covariance matrix $\S$ is the \emph{Mar\v{c}enko–Pastur distribution}~\cite{yao} with index $c$ and  scale parameter by $\sigma^2(1-\rho)$; and
\item  the LSD of sample correlation matrix is the Mar\v{c}enko-Pastur distribution with index $c$ and scale parameter $1-\rho$.
(This answers a question posed in Fan-Jiang~\cite[Remark 2.5]{fan}).
\end{enumerate}

Thus, by properties of Mar\v{c}enko–Pastur distribution, we establish the following \emph{dichotomous behavior} of Guttman-Kaiser criterion when both
of $n$ and $p$ are large, but $p/n$ is small: (1) the criterion retains a small number of
variables for $\rho> 0$, as suggested by Quotation~\ref{quotation}; and (2) the criterion retains $p/2$ variables for $\rho= 0$, as suggested by Yeomans-Golder~\cite{Criterion}.

\subsection*{The organization of this paper}  In Section~\ref{sec:gk and jr}, from an ENP with $0\le \rho<1$, we show that (1)
if furthermore the population is centered with positive variance $\sigma^2$, then the LSD of the sample covariance matrices is Mar\v{c}enko-Pastur distribution scaled by $\sigma^2(1-\rho)$ and that (2) the LSD of the sample correlation matrices is Mar\v{c}enko-Pastur distribution scaled by $(1-\rho)$.
In Section~\ref{sec:components retention rules}, 
by using Theorems of Section~\ref{sec:gk and jr}, we compute the limits of $q/p$ from Guttman-Kaiser criterion and \emph{cumulative-percentage-of-variation rule} (CPV rule)~~\protect{\cite[p.~113]{J}} in $n,p\to\infty$ with $p/n\to c>0$. The comparison of both limits is also presented. By these results, we illustrate the limiting behavior of Guttman-Kaiser criterion  discuss (1) Kaiser's observation~\cite{kai}, that is, Quotation~\ref{quotation}, and (2) a simulation study of Yeomans-Golder~\cite{Criterion} about how many factors Guttman-Kaiser's criterion retains in EFA. In Section~\ref{sec:app}, we also study Guttman-Kaiser criterion and CPV rule with datasets from economics and molecular biology, and discuss future work. Section~\ref{sec:conclusion} is conclusion.

\section{The spectral analysis of equi-correlated normal population}
\label{sec:gk and jr}
Let $X_1,\ldots,X_n \in\R^{p}$ be a random sample from a $p$-dimensional population.
 For the data matrix $\X=[X_1,\ldots,X_n]\in\R^{p\times n}$
 , write $
 \X=\left[x_{ij}\right]_{p\times n}=\left[\x_{1}^\top,\x_{2}^\top,\ldots,\x_{p}^\top\right]^\top$.
 For $1\le i\le p$, let $\bar{x}_i$ be the sample average of $\x_{i}\in \R^{1\times n}$, that is, $\bar{x}_i=n^{-1}\sum_{j=1}^nx_{ij}$.
 We write $\bar{\x}_i$ for $\bar{x}_i\bm e$, where $\bm{e}=[1,\ldots,1]\in \R^{1\times n}$. 
Let $\bar {\X}$ be $[\bar x_1,\bar x_2,\ldots,\bar x_p]^\top \bm{e}\in\R^{p\times n}$ and $\E$ be $n^{-1/2}(\mathbf{X}-\bar{ \X})$. 
The $p\times p$ sample correlation matrix is defined as
\begin{align*}
\bR&=\Y\Y^\top\ \text{where}\  \Y^\top=\left[\frac{(\x_{1}-\bar\x_1)^\top}{\|\x_{1}-\bar\x_1\|},\ldots,\frac{(\x_{p}-\bar\x_p)^\top}{\|\x_{p}-\bar\x_p\|}\right].
\end{align*}
Here, the notation $\|.\|$ is the Euclidean norm. The noncentered  sample  correlation matrix is,  by  definition, $\tilde{\bR}=\mathbf{\tilde{\Y}}\mathbf{\tilde{\Y}}^\top$ with $\mathbf{\tilde{ Y}}^\top=\left[\x_{1}^\top/\|\x_{1}\|,\ldots,\x_{p}^\top/\|\x_{p}\|\right]$. The $p\times p$ sample covariance matrix is denoted by $\S=n^{-1}\X\X^\top$, and the centered sample  covariance  matrix by $\tilde \S=\E\E^\top$.

By a \emph{distribution function}, we mean a nondecreasing, right-continuous function $F:\R\to[0,1]$ such that $\lim_{x\to-\infty} F(x)=0$ and $\lim_{x\to+\infty} F(x)=1$. 
According to~\cite[p.~10]{yao}, the Mar\v{c}enko-Pastur distribution $F_{c,\sigma^2}$ with index $c>0$ and scale parameter $\sigma^2>0$ has a density function
\begin{align*}
\frac{1}{2\pi c\sigma^2 x}\sqrt{(b_{\sigma^2}(c)-x)(x-a_{\sigma^2}(c))}\I_{x\in[a_{\sigma^2}(c),\,b_{\sigma^2}(c)]}
\end{align*}
and has a point mass of value $1 - 1/c$ at the origin if $c > 1$.
Here, $\I$ denotes the indicator function, $a_{\sigma^2}(c) = \sigma^2(1-\sqrt{c})^2$, and $b_{\sigma^2}(c) = \sigma^2(1 +\sqrt{c})^2.$ The Mar\v{c}enko-Pastur distribution has expectation $\sigma^2$. Moreover, by a random variable transformation, we can write that $F_{c,\sigma^2}(x)=F_{c,1}(x/\sigma^2)$ for all $x\in \R$. 

Let $F$ be a distribution function on $\R$ and let $\left(F_n:n\in\mathbb{N}\right)$ be a sequence of distribution functions. We say that $F_n$ \emph{weakly converges} to a function $F:\R\to\R$ if $F_n(x)$ converges to $F(x)$ at every point $x\in\R$ where $F$ is continuous.

Mar\v{c}enko-Pastur~\cite{Marcenko} first found the LSD of the sample covariance matrices. Their result has since been extended in various directions. The following is widely known.
\begin{prop}[\protect{\cite[p.~12]{yao}}]\label{prop:MP LT} Assume that the entries of $\mathbf{X}=[x_{ij}]_{p\times n}$ are centered \emph{i.\@i.\@d.\@} random variables with variance $\sigma^2$, and $n,p\to\infty$ with $p/n\rightarrow c> 0$. Then, almost surely, $F^\S$ weakly converges to $F_{c,\sigma^2}$. 
\end{prop}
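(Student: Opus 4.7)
The plan is to invoke the \emph{Stieltjes transform method}, the canonical route to LSD results of Mar\v{c}enko-Pastur type. By the scaling identity $F_{c,\sigma^2}(x) = F_{c,1}(x/\sigma^2)$ and the fact that rescaling $x_{ij}\mapsto x_{ij}/\sigma$ multiplies the eigenvalues of $\S$ by $1/\sigma^2$, one may reduce without loss of generality to the case $\sigma^2 = 1$. For $z\in\mathbb{C}^+$, define $m_n(z) = p^{-1}\Tr(\S - zI_p)^{-1}$, the Stieltjes transform of $F^{\S}$. Since Stieltjes transforms determine distributions and pointwise convergence on $\mathbb{C}^+$ is equivalent to weak convergence of the underlying distribution functions, it suffices to show that $m_n(z)\to m(z)$ almost surely for every $z\in\mathbb{C}^+$, where $m(z)$ is the unique solution in $\mathbb{C}^+$ of the Mar\v{c}enko-Pastur quadratic $czm(z)^2 + (z + c - 1)m(z) + 1 = 0$.

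Before analyzing $m_n$, I would perform a standard truncation-centralization step: replace $x_{ij}$ by $\hat x_{ij} = x_{ij}\I(|x_{ij}|\le\delta_n\sqrt n)$ for some $\delta_n\downarrow 0$ chosen slowly enough to neutralize higher-moment tails, then recenter and rescale so that the truncated variables have mean zero and variance one. Rank inequalities and the Bai-Yin $L^2$-type bound on Kolmogorov distance between empirical spectral distributions guarantee that this surgery does not alter the LSD, so one may assume the entries are uniformly bounded, centered, and have unit variance.

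The heart of the argument is a leave-one-out expansion. Writing $\S = n^{-1}\sum_{k=1}^n v_k v_k^\top$ in terms of the columns $v_k\in\R^p$ of $\X$, and letting $\S_{(k)}$ denote the sample covariance matrix built from the remaining $n-1$ columns, the Sherman-Morrison rank-one inversion formula yields
\[
m_n(z) = -\frac{1}{zp}\sum_{k=1}^n\frac{1}{1 + n^{-1}v_k^\top(\S_{(k)} - zI_p)^{-1}v_k}.
\]
Concentration for quadratic forms in independent variables (Hanson-Wright, or equivalently a Burkholder martingale-difference decomposition) shows that $n^{-1}v_k^\top(\S_{(k)} - zI_p)^{-1}v_k$ is almost surely close to $n^{-1}\Tr(\S_{(k)} - zI_p)^{-1}$, which in turn differs from $(p/n)\,m_n(z)$ only by a $O(1/n)$ rank-one correction. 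Substituting and letting $n,p\to\infty$ with $p/n\to c$ produces the Mar\v{c}enko-Pastur fixed-point equation above in the limit; uniqueness of its $\mathbb{C}^+$-valued solution then forces $m_n(z)\to m(z)$ almost surely.

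The principal obstacle will be upgrading the concentration of these quadratic forms from in-probability to almost-sure, uniformly in $k$; this is handled by Borel-Cantelli combined with the moment bounds available after truncation. A secondary technicality is the uniform control of the resolvents $(\S_{(k)} - zI_p)^{-1}$ on compact subsets of $\mathbb{C}^+$, needed to invert the nonlinear limiting equation and conclude convergence of $m_n(z)$ itself rather than of some subsequence.
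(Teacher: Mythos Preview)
The paper does not supply its own proof of this proposition: it is quoted verbatim as a known result from \cite[p.~12]{yao} and used as a black box throughout Section~\ref{sec:gk and jr}. There is therefore no ``paper's proof'' against which to compare your attempt.

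That said, your outline is the standard Stieltjes-transform route that the cited reference (Yao--Zheng--Bai) itself follows: reduce to $\sigma^2=1$, truncate and recenter so that entries are bounded, expand the resolvent column by column via Sherman--Morrison, replace the quadratic forms $n^{-1}v_k^\top(\S_{(k)}-zI_p)^{-1}v_k$ by their traces using a concentration inequality, and read off the self-consistent equation for $m(z)$. The sketch is correct in spirit and in its identification of the two genuine technical points (almost-sure concentration uniformly in $k$, and extraction of convergence from the limiting fixed-point equation). One small remark: the displayed identity for $m_n(z)$ as written is missing an additive term; the exact Schur/Sherman--Morrison bookkeeping gives
\[
m_n(z)=\frac{1}{p}\sum_{k=1}^n\frac{-1}{\,z\bigl(1+n^{-1}v_k^\top(\S_{(k)}-zI_p)^{-1}v_k\bigr)}\;+\;\frac{n-p}{p}\cdot\frac{1}{-z}\,,
\]
or, more commonly, one works instead with the companion Stieltjes transform $\underline m_n(z)=n^{-1}\Tr(\X^\top\X/n-zI_n)^{-1}$ to avoid this discrepancy. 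This is a bookkeeping issue rather than a conceptual gap.
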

Furthermore, Jiang~\cite{Ji} established the LSD of the sample correlation matrix $\bR$. 
\begin{prop}[\protect{\cite[{Theorem 1.2}]{Ji}}\label{prop:ji3}]
Assume that the entries of $\mathbf{X}=[x_{ij}]_{p\times n}$ are \emph{i.\@i.\@d.\@} random variables. Suppose $\Exp\left|x_{ij}\right|^2<\infty$ and $n,p\to\infty$ with $p/n\to c>0$. Then, almost surely, $F^{\bR}$ weakly converges to $F_{c,1}$. This conclusion is true for $F^{\tilde{\bR}}$ provided $\Exp(x_{ij})=0$ in addition.
\end{prop}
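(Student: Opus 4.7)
The plan is to reduce Proposition~\ref{prop:ji3} to Proposition~\ref{prop:MP LT} by showing that $\bR$ is spectrally close to a suitably rescaled centered sample covariance matrix. Let $\sigma_\ast^2=\Var(x_{11})$ and write $\D=\diag(d_1^2,\ldots,d_p^2)$ with $d_i^2=n^{-1}\|\x_i-\bar\x_i\|^2$, so that $\bR=\D^{-1/2}\tilde\S\D^{-1/2}$. The strong law of large numbers gives $d_i^2\to\sigma_\ast^2$ almost surely for each fixed $i$; the uniformity quantity one eventually has to control is $\varepsilon_p:=\max_{1\le i\le p}|d_i/\sigma_\ast-1|$.

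First, I would verify that $F^{\sigma_\ast^{-2}\tilde\S}$ converges weakly to $F_{c,1}$ almost surely. The difference $\S-\tilde\S$ has rank at most two because centering subtracts the rank-one matrix $\bar{\X}$ from $\X$, so Bai's rank inequality $\|F^A-F^B\|_\infty\le p^{-1}\rank(A-B)$ yields $\|F^\S-F^{\tilde\S}\|_\infty\le 2/p$, and Proposition~\ref{prop:MP LT} applied to recentered entries gives $F^{\tilde\S}\Rightarrow F_{c,\sigma_\ast^2}$ a.s.; rescaling by $\sigma_\ast^{-2}$ then gives $F^{\sigma_\ast^{-2}\tilde\S}\Rightarrow F_{c,1}$ a.s.

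Next, to compare $F^{\bR}$ with $F^{\sigma_\ast^{-2}\tilde\S}$, I would use Bai's Frobenius inequality $L^3(F^A,F^B)\le p^{-1}\Tr((A-B)^2)$ for Hermitian $A,B$, together with the telescoping decomposition
\begin{equation*}
\bR-\sigma_\ast^{-2}\tilde\S=(\D^{-1/2}-\sigma_\ast^{-1}I)\,\tilde\S\,\D^{-1/2}+\sigma_\ast^{-1}\tilde\S\,(\D^{-1/2}-\sigma_\ast^{-1}I).
\end{equation*}
A submultiplicative Frobenius estimate, combined with the bound $\|\D^{-1/2}-\sigma_\ast^{-1}I\|_{\mathrm{op}}=\max_i|d_i^{-1}-\sigma_\ast^{-1}|=O(\varepsilon_p)$, gives $p^{-1}\|\bR-\sigma_\ast^{-2}\tilde\S\|_F^2=O(\varepsilon_p^2\cdot p^{-1}\Tr(\tilde\S^2))$; the trace factor converges a.s.\ to the (finite) second moment of $F_{c,\sigma_\ast^2}$. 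The whole problem thus reduces to proving $\varepsilon_p\to 0$ a.s.

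This last reduction is the main obstacle: with only $\Exp|x_{ij}|^2<\infty$, the SLLN controls each $d_i^2$ individually but not uniformly as $p=O(n)\to\infty$. The standard remedy is truncation: replace $x_{ij}$ by $\check x_{ij}=x_{ij}\I(|x_{ij}|\le\eta_n\sqrt n)-\Exp[x_{ij}\I(|x_{ij}|\le\eta_n\sqrt n)]$ with $\eta_n\downarrow 0$ slow enough that the global truncation error is negligible (again controlled by the rank inequality), while $\check x_{ij}$ acquires a uniformly bounded fourth moment. A Bernstein-type union bound over the $p$ rows then delivers $\max_i|\check d_i^2-\sigma_\ast^2|\to 0$ a.s., closing the argument. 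The non-centered case $\tilde\bR$ is handled identically after substituting $\S$ for $\tilde\S$ and $\diag(\S_{ii})$ for $\D$; here the hypothesis $\Exp(x_{ij})=0$ is exactly what allows Proposition~\ref{prop:MP LT} to supply the LSD of $\S$ directly without the rank-two centering correction.
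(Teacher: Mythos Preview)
The paper does not give its own proof of this proposition---it is quoted from Jiang~\cite{Ji}---but immediately after proving Theorem~\ref{thm:equilsd2} the paper describes Jiang's method, and the $\rho=0$ specialization of that proof \emph{is} Jiang's argument. That argument uses Bai's $L^4$ inequality \cite[Lemma~2.7]{bai4},
\[
\Le^4\bigl(F^{\bR},F^{\E\E^\top}\bigr)\le \frac{2}{p}\Tr(\Y\Y^\top+\E\E^\top)\cdot\frac{1}{p}\Tr\bigl((\Y-\E)(\Y-\E)^\top\bigr),
\]
expands the second factor as $d_1-2d_2$ with $d_2=\frac{1}{p}\sum_i\bigl(\|\x_i-\bar\x_i\|/\sqrt n-1\bigr)$, and obtains the uniform lower bound $\liminf\min_{i\le p} n^{-1}\sum_j x_{ij}^2\ge\sigma_\ast^2$ directly from Proposition~\ref{prop:bai} (the Bai--Yin maximal lemma).

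Your route is a legitimate variant---the Hoffman--Wielandt $L^3$ bound on $\bR-\sigma_\ast^{-2}\tilde\S$, with the diagonal perturbation $\D^{-1/2}-\sigma_\ast^{-1}I$ pulled out in operator norm---but two points deserve attention. First, a small gap: with only $\Exp|x_{11}|^2<\infty$, the factor $p^{-1}\Tr(\tilde\S^2)=\int\lambda^2\,dF^{\tilde\S}$ is not obviously bounded, since weak convergence of $F^{\tilde\S}$ does not control second moments when the top eigenvalue may diverge; you need to truncate \emph{before} this step, not only for $\varepsilon_p$. The paper's $L^4$ inequality avoids this because its prefactor is $p^{-1}\Tr(\E\E^\top)\le (np)^{-1}\sum_{i,j}x_{ij}^2$, handled by the ordinary SLLN. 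Second, your truncation~$+$~Bernstein programme for $\varepsilon_p\to0$ is essentially a re-derivation of Proposition~\ref{prop:bai} with $\alpha=\beta=1$; citing that lemma as Jiang and the paper do is shorter, though your version is self-contained.
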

We discuss the LSDs of the sample covariance matrices and the sample correlation matrices formed from an ENP. An ENP is written as  $\mathrm{N}_p(\bm{\mu},\,\mathbf{D}\Rc(\rho)\mathbf{D})$ for a deterministic vector $\bm{\mu}\in\R^p$ and a deterministic nonsingular diagonal matrix $\mathbf{D}\in\R^{p\times p}$. 

We assume $X_1,\ldots,X_n\stackrel{\mbox{\rm i.\@i.\@d.\@}}{\sim} \mathrm{N}_p(\bm{\mu},\,\mathbf{D}\Rc(\rho)\mathbf{D})$,
$0\le \rho< 1$, and $n,p\to \infty$ with $p/n\to c>0$. 
Then, in the following section, as $n,p\to\infty$ with $p/n\to c>0$, we obtain that (a) the LSD of the sample covariance matrix is $F_{c,\sigma^2(1-\rho)}$ for centered random variables and $\mathbf{D}=\sigma\mathbf{I}$ where $\mathbf{I}$ is the identity matrix of order $p$, (b)
the LSD of the sample correlation matrix is $F_{c,1-\rho}$.

\subsection{Sample covariance matrices}
We recall the following propositions from Huber-Ronchetti~\cite{huber}, Bai-Silverstein~\cite{bai4}, and Anderson et al.~\cite{anderson}.
The \emph{L{\'e}vy distance} between two distribution functions $F$ and $G$ is denoted by $\Le(F,G)$. 
By Huber-Ronchetti~\cite[Definition~2.7]{huber}, the $\Le(F,G)$ is defined as 
\begin{align}\label{levy}
    \Le(F,G)=\inf\Set{\varepsilon>0|F(x-\varepsilon)-\varepsilon\le G(x)\le F(x+\varepsilon)+\varepsilon}.
\end{align}
The following proposition means that a distribution function $F$ weakly converges to a distribution function $G$ if and only if  $\Le(F,G)\to 0$.
\begin{prop}[\protect{\cite[Lemma~2.9]{huber}}] \label{huber1}
The L{\'e}vy distance metrizes the weak topology of the set of distribution functions.
\end{prop}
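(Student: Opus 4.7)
The plan is to establish the two implications that together express metrization of the weak topology by $\Le$, after a short check that $\Le$ is indeed a metric. Symmetry of $\Le$ follows from rewriting the inequalities in \eqref{levy} by translating $x$; the identity of indiscernibles follows from right-continuity of distribution functions (letting $\varepsilon\downarrow 0$ forces $F=G$); and the triangle inequality follows by concatenating two admissible strips, so if $F(x-\varepsilon_1)-\varepsilon_1\le G(x)\le F(x+\varepsilon_1)+\varepsilon_1$ and similarly for $G,H$ with $\varepsilon_2$, then the pair $F,H$ admits $\varepsilon_1+\varepsilon_2$. These are routine.

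For the forward direction, suppose $F_n$ converges weakly to a distribution function $F$, and fix $\varepsilon>0$. Using that $F$ has only countably many discontinuities together with $F(-\infty)=0$ and $F(+\infty)=1$, I would select finitely many continuity points $x_0<x_1<\cdots<x_k$ of $F$ whose consecutive gaps are less than $\varepsilon$ and for which $F(x_0)<\varepsilon$ and $F(x_k)>1-\varepsilon$. Pointwise convergence on this finite grid gives $|F_n(x_i)-F(x_i)|<\varepsilon$ for every $i$ when $n$ is large. For an arbitrary $x\in\R$, locate $x_i\le x<x_{i+1}$ and use monotonicity of $F_n$ and $F$ together with the $\varepsilon$-spacing to sandwich $F_n(x)$ between $F(x-\varepsilon)-\varepsilon$ and $F(x+\varepsilon)+\varepsilon$; the tail regions $x<x_0$ and $x\ge x_k$ are handled directly by the choice of $x_0,x_k$. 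Consequently $\Le(F_n,F)\le\varepsilon$ eventually, so $\Le(F_n,F)\to 0$.

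For the reverse direction, assume $\Le(F_n,F)\to 0$, let $x$ be a continuity point of $F$, and fix $\delta>0$. Continuity of $F$ at $x$ yields an $\eta>0$ with $F(x+\eta)-F(x-\eta)<\delta$. Taking $n$ large enough that $\Le(F_n,F)<\min(\eta,\delta)$, the defining inequalities of the Lévy distance give
\begin{equation*}
F(x-\eta)-\delta\le F_n(x)\le F(x+\eta)+\delta,
\end{equation*}
which combined with $|F(x\pm\eta)-F(x)|<\delta$ produces $|F_n(x)-F(x)|<2\delta$. Hence $F_n(x)\to F(x)$ at every continuity point of $F$, i.e., $F_n$ converges weakly to $F$.

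The main technical obstacle is the forward direction, specifically constructing the finite grid of continuity points that simultaneously controls the tails of $F$ and has spacing finer than $\varepsilon$, and then transferring the finite-grid bound to all $x\in\R$ via monotonicity; the reverse direction reduces to a clean three-term bound once $\eta$ is chosen from continuity of $F$ at $x$.
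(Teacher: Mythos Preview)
Your proposal is correct and follows essentially the same argument as the paper: both directions are proved exactly as you describe, via a finite grid of continuity points of $F$ (with $\varepsilon$-spacing and controlled tails) for the implication weak $\Rightarrow$ $\Le\to0$, and via the defining inequalities of $\Le$ together with continuity of $F$ at $x$ for the converse. The only difference is that you additionally verify the metric axioms for $\Le$, which the paper omits (it proves only the equivalence of the two modes of convergence), and your $\varepsilon$-bookkeeping is slightly more explicit.
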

\begin{proof} We just show that the convergence of $F_n\to F$ at continuity points of $F$ and $\Le(F_n,F)\to 0$ are equivalent as $n\to\infty$. (i) Assume $\Le(F_n,F)\to 0$ as $n\to\infty$. Let $x$ be a continuity point of $F$. Then, $F(x\pm \varepsilon)\pm \varepsilon\to F(x)$ as $\varepsilon\to 0$. By this and~\eqref{levy}, we have $F_n(x)\to F(x)$ as $n\to\infty$. (ii) Assume $F_n(x)\to F(x)$ for all continuity points $x$ of $F$. Let $x_i$ ($0\le i\le N$) be strictly decreasing continuity points of $F$ such that $F(x_0)<\varepsilon/2$ and $F(x_N)>1-\varepsilon/2$, and that $x_{i+1}-x_i<\varepsilon$. Let $N\in\mathbb{N}$ be sufficiently large such that $|F_n(x_i)-F(x_i)|\le\varepsilon/2$ for all $n\ge N$. By this and monotonicity of $F_n$ and $F$, for $x\in [x_{i-1},x_i]$, we have $F_n(x)\le F_n(x_i)<F_n(x_i)+\varepsilon/2\le F(x+\varepsilon)+\varepsilon$ for all $n\ge N$. This bound also holds for $x<x_0$ and $x>x_N$. In the same way, we establish $F_n(x)\ge F(x-\varepsilon)-\varepsilon$.
\end{proof}
\begin{figure}
      \includegraphics[scale=0.5]{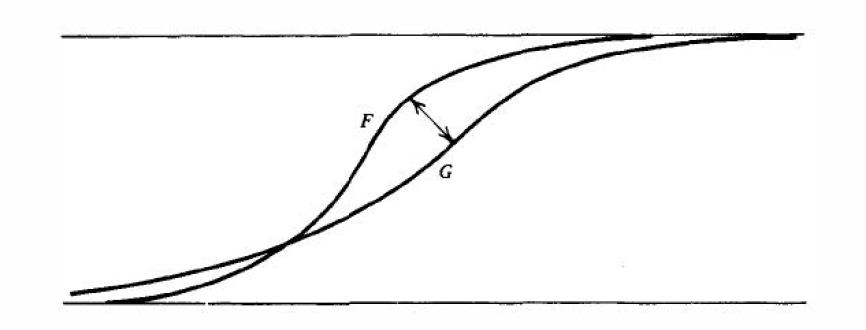}
      \caption{The maximum distance between the graph of {$F$} and {$G$}, measured along a {$45^{\circ}$}-direction ($\sqrt{2}L(F,G)$)~\cite[p.~28]{huber}.}
      \end{figure}

\medskip
The \emph {Kolmogorov distance} between two distribution functions $F$ and $G$ is defined as
\begin{align}\label{kolmogorov}
\K(F,G)=\sup_ {x\in \mathbb{R}}\left|F(x)-G(x)\right|
\end{align}
do not generate the weak topology,  but they possess  other convenient properties.  In particular, we have the following inequalities.
\begin{prop}[\protect{\cite[p.~36]{huber}}]\label{huber2}
For any distribution functions $F$ and $G$,
$\Le(F,G)\le \K(F,G).$
\end{prop}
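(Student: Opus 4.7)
The plan is to show directly that any $\varepsilon \ge K(F,G)$ belongs to the set over which the infimum defining $L(F,G)$ is taken. Since $L(F,G)$ is that infimum, this immediately yields $L(F,G) \le K(F,G)$.

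Concretely, set $\varepsilon = K(F,G)$ (the case $\varepsilon = 0$ is handled by taking an arbitrary $\delta > 0$ and letting $\delta \to 0$ at the end). By the definition of the Kolmogorov distance we have, for every $x \in \mathbb{R}$,
\begin{align*}
F(x) - \varepsilon \le G(x) \le F(x) + \varepsilon.
\end{align*}
Next I would invoke monotonicity of $F$: since $\varepsilon \ge 0$, $F(x-\varepsilon) \le F(x)$ and $F(x) \le F(x+\varepsilon)$. Combining these with the displayed Kolmogorov bound gives
\begin{align*}
F(x-\varepsilon) - \varepsilon \le F(x) - \varepsilon \le G(x) \le F(x) + \varepsilon \le F(x+\varepsilon) + \varepsilon,
\end{align*}
which is precisely the condition in \eqref{levy} characterizing the admissible $\varepsilon$ for the L\'evy distance. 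Hence $\varepsilon = K(F,G)$ lies in that set (or, more carefully, every $\varepsilon' > K(F,G)$ does), so $L(F,G) \le K(F,G)$.

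There is essentially no obstacle here; the only subtlety is a small bookkeeping issue about whether the infimum in \eqref{levy} is attained or one must pass to an infimum over $\varepsilon' > K(F,G)$. I would handle this by noting that the argument above works verbatim with $\varepsilon$ replaced by any $\varepsilon' \ge K(F,G)$, so taking the infimum over such $\varepsilon'$ gives $L(F,G) \le K(F,G)$. The proof uses only monotonicity of the distribution functions and the two definitions; no continuity or right-continuity is needed.
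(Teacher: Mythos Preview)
Your proof is correct and follows essentially the same approach as the paper: both set $\varepsilon$ (the paper calls it $a$) equal to $\K(F,G)$, deduce $F(x)-\varepsilon\le G(x)\le F(x)+\varepsilon$ from the definition of Kolmogorov distance, and then use monotonicity of $F$ to sandwich this into the L\'evy condition $F(x-\varepsilon)-\varepsilon\le G(x)\le F(x+\varepsilon)+\varepsilon$. You are slightly more careful than the paper about the edge case $\K(F,G)=0$ and about whether the infimum is attained, but the argument is otherwise identical.
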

\begin{proof}
Let $\K(F,G)$ be a positive $a$. Then, by \eqref{kolmogorov}, $F(x)-G(x)\le a$ and $G(x)-F(x)\le a$ for all $x\in\R$. By this, for all $x\in\R$, we can write $F(x-a)-a\le F(x)-a\le G(x)$ and  $G(x)\le F(x)+a\le F(x+a)+a$ since $F$ and $G$ are nondecreasing function. By \eqref{levy}, we have Proposition~\ref{huber2}.
\end{proof}
In cases where the underlying variables are dependent, the following proposition is powerful.
\begin{prop}[\protect{\cite[Lemma~2.6, the rank inequality]{bai4}}] \label{rank}
\begin{align*}
\K\left(F^{\A\A^\top},F^{\B\B^\top}\right)\leq\frac{1}{p}\;\rank(\A-\B),\qquad(\A,\B\in\R^{p\times n}).\end{align*}
\end{prop}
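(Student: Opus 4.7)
The natural first attempt is to apply the Hermitian rank inequality, $\K(F^M, F^N) \le \rank(M - N)/p$ for Hermitian matrices $M, N$ of order $p$. Since
$\A\A^\top - \B\B^\top = \A(\A - \B)^\top + (\A - \B)\B^\top$
has rank at most $2\rank(\A - \B)$, this naive approach only yields the bound $2\rank(\A - \B)/p$. To recover the sharper factor, the product structure must be exploited directly.

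My plan is to establish the one-sided eigenvalue interlacing
$\lambda_{j + k}(\A\A^\top) \le \lambda_j(\B\B^\top)$ for every admissible $j$, where $k = \rank(\A - \B)$, together with the symmetric statement obtained by swapping $\A$ and $\B$. I would invoke the Courant-Fischer characterization
$\lambda_j(\A\A^\top) = \max_{\dim W = j}\min_{x \in W,\, \|x\|=1} \|\A^\top x\|^2$.
Set $V = \ker\bigl((\A - \B)^\top\bigr) \subset \R^p$, so that $\dim V = p - k$ and $\|\A^\top x\| = \|\B^\top x\|$ for every $x \in V$. For any $j$-dimensional subspace $W \subset \R^p$, the intersection $W \cap V$ has dimension at least $j - k$, so
\begin{align*}
\min_{x \in W,\, \|x\|=1} \|\A^\top x\|^2 \le \min_{x \in W \cap V,\, \|x\|=1} \|\A^\top x\|^2 = \min_{x \in W \cap V,\, \|x\|=1} \|\B^\top x\|^2 \le \lambda_{j-k}(\B\B^\top).
\end{align*}
Maximising the left-hand side over $W$ gives $\lambda_j(\A\A^\top) \le \lambda_{j-k}(\B\B^\top)$, which after reindexing is the desired interlacing.

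To close the argument, fix $x \ge 0$ and set $N_\A(x) = \#\{j : \lambda_j(\A\A^\top) \le x\}$, so that $F^{\A\A^\top}(x) = N_\A(x)/p$, and define $N_\B(x)$ analogously. Because the eigenvalues are listed in nonincreasing order, the interlacing $\lambda_{j+k}(\A\A^\top) \le \lambda_j(\B\B^\top)$ immediately forces $N_\A(x) \ge N_\B(x) - k$, and the symmetric interlacing yields the reverse bound; for $x < 0$ both empirical distributions vanish. Taking the supremum over $x$ delivers $\K(F^{\A\A^\top}, F^{\B\B^\top}) \le k/p$. I expect the interlacing step to be the main hurdle, since the essential point is precisely to avoid the naive Hermitian rank inequality; once the interlacing is in hand, the counting is routine bookkeeping.
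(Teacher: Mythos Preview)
The paper does not supply its own proof of this proposition; it is quoted as Lemma~2.6 from Bai's review and invoked as a black box in the proof of Theorem~\ref{thm:equilsd}. Your argument is correct and is essentially the standard proof of the rank inequality: the key step is the interlacing $\lambda_{j+k}(\A\A^\top)\le\lambda_j(\B\B^\top)$ obtained via Courant--Fischer and the codimension-$k$ subspace $V=\ker((\A-\B)^\top)$ on which $\A^\top$ and $\B^\top$ agree, after which the Kolmogorov bound follows by the eigenvalue-counting you describe.

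One small point worth making explicit in a write-up: the displayed chain of inequalities requires $j>k$, so that $W\cap V$ is nontrivial and $\lambda_{j-k}$ is defined; for $1\le j\le k$ the interlacing statement is vacuous (or needs the convention $\lambda_0=+\infty$). This does not affect the final counting, since when $N_\B(x)\le k$ the inequality $N_\A(x)\ge N_\B(x)-k$ holds trivially, but you should flag the index range when you spell out the interlacing step.
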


We are concerned with a well-known decomposition of a centered, equi-correlated normal random vector:
\begin{prop}[\protect{\cite[(3.21)]{fan}}]\label{lem:decomposition}
If $X_1,\ldots,X_n\stackrel{\mbox{\rm i.\@i.\@d.\@}}{\sim} \mathrm{N}_p(\mathbf{0},\,\sigma^2\Rc(\rho))$ for $\sigma>0$ and $0\le \rho< 1$, then we can find independent, standard normal random variables $\eta_j,\xi_{ij}$ $(1\le i\le p,\ 1\le j\le n)$ such that \begin{align}X_j=\sigma\sqrt{\rho}\,[\eta_j,\ldots,\eta_j]^\top+\sigma\sqrt{1-\rho}\,[\xi_{1j},\ldots,\xi_{pj}]^\top.\label{xij decomposition}
\end{align}
\end{prop}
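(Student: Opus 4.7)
The plan is to produce the decomposition in distribution: I construct the right-hand side from fresh independent standard normals, verify it has the same joint law as $(X_1,\ldots,X_n)$, and then replace the original sample by this version (which is legitimate since everything that follows in the paper depends only on the joint distribution of the data).

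First, on some probability space let $(\eta_j)_{1\le j\le n}$ and $(\xi_{ij})_{1\le i\le p,\,1\le j\le n}$ be mutually independent standard normal random variables. For each $j=1,\ldots,n$ set
\begin{align*}
Y_j \;=\; \sigma\sqrt{\rho}\,\eta_j\,[1,\ldots,1]^\top \;+\; \sigma\sqrt{1-\rho}\,[\xi_{1j},\ldots,\xi_{pj}]^\top \;\in\;\R^p.
\end{align*}
Each $Y_j$ is a linear image of a Gaussian vector and hence multivariate normal, with mean $\mathbf{0}$ since $\Exp\eta_j=\Exp\xi_{ij}=0$.

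Next I compute the covariance. Writing $\ones=[1,\ldots,1]^\top\in\R^p$, the contribution of the $\eta_j$-term to $\cov(Y_j)$ is $\sigma^2\rho\,\ones\ones^\top$ and the contribution of the $\xi_{\cdot j}$-term is $\sigma^2(1-\rho)\mathbf{I}_p$, with zero cross-term by independence of $\eta_j$ from $\xi_{\cdot j}$. Therefore
\begin{align*}
\cov(Y_j) \;=\; \sigma^2\bigl(\rho\,\ones\ones^\top + (1-\rho)\mathbf{I}_p\bigr) \;=\; \sigma^2\Rc(\rho),
\end{align*}
since $\rho\,\ones\ones^\top+(1-\rho)\mathbf{I}_p$ is exactly the matrix with diagonal entries $1$ and off-diagonal entries $\rho$. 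For $j\ne k$ the variables $\eta_j,\xi_{1j},\ldots,\xi_{pj}$ are independent of $\eta_k,\xi_{1k},\ldots,\xi_{pk}$, so $Y_j$ and $Y_k$ are independent Gaussian vectors. Hence $(Y_1,\ldots,Y_n)$ is i.i.d.\ $\mathrm{N}_p(\mathbf{0},\sigma^2\Rc(\rho))$, which agrees with the joint law of $(X_1,\ldots,X_n)$.

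Since only the joint distribution of the sample enters into the spectral quantities that the paper analyzes, I may pass to this explicit realization and write $X_j=Y_j$, giving the claimed decomposition \eqref{xij decomposition}. There is no serious obstacle here: the only subtlety worth flagging is that the decomposition is an equality in distribution (arising from a coupling with auxiliary independent normals), not a pathwise identity recovered algebraically from a given $X_j$; however, for the uses made of the proposition later, distributional equivalence is exactly what is needed.
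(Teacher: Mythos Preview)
Your proof is correct. The paper does not supply its own proof of this proposition; it simply cites the decomposition from Fan--Jiang~\cite[(3.21)]{fan} and uses it as a known input. Your verification---construct the right-hand side from fresh independent standard normals, compute $\cov(Y_j)=\sigma^2(\rho\,\ones\ones^\top+(1-\rho)\mathbf{I}_p)=\sigma^2\Rc(\rho)$, note independence across $j$, and conclude equality in distribution---is the standard argument, and your caveat that the identity is a distributional coupling rather than a pathwise recovery is exactly the right way to read how the proposition is used downstream (e.g.\ in the proofs of Theorem~\ref{thm:equilsd} and Theorem~\ref{thm:equilsd2}).
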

Then, Proposition~\ref{prop:MP LT} derives the following:

\begin{thm}[Shrinkage]\label{thm:equilsd}
Suppose $X_1,\ldots,X_n\stackrel{\mbox{\rm i.\@i.\@d.\@}}{\sim} \mathrm{N}_p(\mathbf{0},\,\sigma^2\Rc(\rho))$ for $\sigma>0$, $0\le \rho< 1$, and $n,p\to \infty$ with  $p/n\to c>0$. Then, 
it holds almost surely that both of $F^{\S}$ and $F^{\E\E^\top}$ weakly converge to $F_{c,\sigma^2(1-\rho)}$. 
\end{thm}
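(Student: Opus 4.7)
The strategy is to use the decomposition of Proposition~\ref{lem:decomposition} to reduce $\S$ to the sample covariance of an i.i.d.\ standard normal array, to which Proposition~\ref{prop:MP LT} directly applies, modulo an additive rank-one perturbation of the data matrix. The rank inequality (Proposition~\ref{rank}), together with Propositions~\ref{huber1} and \ref{huber2}, will then transfer the limiting spectral distribution from the reduced matrix back to $\S$ (and later, with one further rank-one correction, to $\E\E^{\top}$).

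Concretely, by Proposition~\ref{lem:decomposition} I would write $\mathbf{X} = \sigma\sqrt{\rho}\,\bm{1}_{p}\bm{\eta}^{\top} + \sigma\sqrt{1-\rho}\,\bm{\Xi}$, where $\bm{\eta}=[\eta_{1},\dots,\eta_{n}]^{\top}$ and $\bm{\Xi}=[\xi_{ij}]_{p\times n}$ has i.i.d.\ standard normal entries. Setting $\mathbf{A}=n^{-1/2}\mathbf{X}$ and $\mathbf{B}=\sigma\sqrt{1-\rho}\,n^{-1/2}\bm{\Xi}$, one has $\mathbf{A}\mathbf{A}^{\top}=\S$, $\mathbf{B}\mathbf{B}^{\top}=\sigma^{2}(1-\rho)\cdot n^{-1}\bm{\Xi}\bm{\Xi}^{\top}$, and $\mathbf{A}-\mathbf{B}=\sigma\sqrt{\rho}\,n^{-1/2}\bm{1}_{p}\bm{\eta}^{\top}$ has rank at most $1$. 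Proposition~\ref{rank} then gives $\K(F^{\S},F^{\mathbf{B}\mathbf{B}^{\top}})\le 1/p\to 0$. Proposition~\ref{prop:MP LT} applied to $\bm{\Xi}$ yields $F^{n^{-1}\bm{\Xi}\bm{\Xi}^{\top}}\to F_{c,1}$ weakly and a.s.; the scaling identity $F_{c,\sigma^{2}}(x)=F_{c,1}(x/\sigma^{2})$ upgrades this to $F^{\mathbf{B}\mathbf{B}^{\top}}\to F_{c,\sigma^{2}(1-\rho)}$ weakly, a.s. Combining with Propositions~\ref{huber1} and \ref{huber2} and the triangle inequality for $\Le$ concludes $F^{\S}\to F_{c,\sigma^{2}(1-\rho)}$ weakly, almost surely.

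For $F^{\E\E^{\top}}$, the same decomposition after row-centering reads $\E=n^{-1/2}\sigma\sqrt{\rho}\,\bm{1}_{p}(\bm{\eta}-\bar\eta\,\bm{e})^{\top}+n^{-1/2}\sigma\sqrt{1-\rho}\,(\bm{\Xi}-\bar{\bm{\Xi}})$, whose first summand again has rank $\le 1$, so Proposition~\ref{rank} bounds $\K(F^{\E\E^{\top}}, F^{\tilde{\mathbf{B}}\tilde{\mathbf{B}}^{\top}})\le 1/p$ where $\tilde{\mathbf{B}}:=n^{-1/2}\sigma\sqrt{1-\rho}(\bm{\Xi}-\bar{\bm{\Xi}})$. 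A second application of Proposition~\ref{rank}, exploiting that $\bar{\bm{\Xi}}=n^{-1}\bm{\Xi}\bm{e}^{\top}\bm{e}$ itself has rank $\le 1$, gives $\K(F^{\tilde{\mathbf{B}}\tilde{\mathbf{B}}^{\top}},F^{\mathbf{B}\mathbf{B}^{\top}})\le 1/p$, so by triangle inequality $\K(F^{\E\E^{\top}},F^{\mathbf{B}\mathbf{B}^{\top}})\le 2/p\to 0$, and the same limit $F_{c,\sigma^{2}(1-\rho)}$ is inherited.

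I expect the only delicate point to be the careful rank accounting of the two rank-one perturbations in the centered case (the $\eta$-component and the $\bar{\bm{\Xi}}$-correction); once these are tracked, the theorem is a direct assembly of Propositions~\ref{prop:MP LT}, \ref{huber1}, \ref{huber2}, \ref{rank}, and \ref{lem:decomposition}.
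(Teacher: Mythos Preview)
Your proposal is correct and follows essentially the same approach as the paper: the decomposition from Proposition~\ref{lem:decomposition}, the rank inequality (Proposition~\ref{rank}) combined with Propositions~\ref{huber1}--\ref{huber2} to dispose of the rank-one $\eta$-component, and Proposition~\ref{prop:MP LT} for the i.i.d.\ part. The only cosmetic difference is that for $F^{\E\E^{\top}}$ the paper compares $\E$ directly to $n^{-1/2}\X$ in a single step (since $\bar\X$ has rank $\le 1$), whereas you route through $\tilde{\mathbf{B}}$ and then $\mathbf{B}$ with two rank-one corrections; both arguments are equivalent and yield the same $O(1/p)$ bound.
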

\begin{proof}
By Proposition~\ref{lem:decomposition}, the data matrix $\X=[X_1,\ldots,X_n]$ satisfies \begin{align}\X=\P+\U\label{xwx},
\qquad \P =\left[\sigma \sqrt{\rho}\eta_j\right]_{p\times n},\qquad \U=\left[\sigma\sqrt{1-\rho}\xi_{ij}\right]_{p\times n}.
\end{align} 
All entries of the  matrix $U$ are i.\@i.\@d.\@ centered random variables with variance $\sigma^2(1-\rho)$.
Hence, by Proposition~\ref{prop:MP LT},
$F^{\frac{1}{n}\U \U^\top}$ weakly converges to $F_{c,\sigma^2(1-\rho)}$ almost surely,
as $p,n\to\infty,p/n\to c>0$.
Thus, by Proposition~\ref{huber1}, 
\begin{align}\label{xi}
\Le\left(F^{\frac{1}{n}\U \U^\top},\, F_{c,\sigma^2(1-\rho)}\right)\cas0.
\end{align}
Here, the notation $\cas$ means almost sure convergence. Recall that   $\S=n^{-1}\X\X^\top$. 
Thus, by Proposition~\ref{huber2}, Proposition~\ref{rank}, \eqref{xwx}, and $\rank \P\le 1$,
\begin{align*}
\Le\left(F^{\S},\ 
F^{\frac{1}{n}\U \U^\top}\right)\le \K\left(F^{\S},F^{\frac{1}{n}\U \U^\top}\right)
&\le \frac{1}{p}\rank\left(\frac{1}{\sqrt{n}}\X-\frac{1}{\sqrt{n}}\U\right)\\
&=  \frac{1}{p}\rank \left(\frac{1}{\sqrt{n}}\P \right)
\le \frac{1}{p}
\to 0 &(p\to \infty).
\end{align*}
By this and \eqref{xi}, the triangle inequality implies $
\Le\left(F^{\S},\ F_{c,\sigma^2(1-\rho)}\right)\cas 0$.
By Proposition~\ref{huber1}, almost surely, $F^{\S}$ weakly converges to $F_{c,\sigma^2(1-\rho)}$.

By $\E=n^{-1/2}(\mathbf{X}-\bar{ \X})$, Proposition~\ref{huber2}, Proposition~\ref{rank}, and $\rank (\bar {{\X}})\le 1$,
\begin{align*}
\Le(F^{\E\E^\top},F^{\S})\le \K\left(F^{\E\E^\top},F^{\frac{\X\X^\top}{n}}\right)
\le \frac{\rank \left(\bar { {\X}}\right)}{p}
\le \frac{1}{p}\to 0,
\end{align*}
for $p\to \infty$. Thus, 
$\Le\left(F^{\S},\ F_{c,\sigma^2(1-\rho)}\right)\cas 0\iff \Le\left(F^{\E\E^\top},\ F_{c,\sigma^2(1-\rho)}\right)\cas 0.$ 
By Proposition~\ref{huber1}, almost surely, $F^{\E\E^\top}$ weakly converges to $F_{c,\sigma^2(1-\rho)}$. 
\end{proof}
\begin{rem}\label{rem:a}\rm
By using \cite{silver} for \emph{Stieltjes transform},
we can derive this shrinkage theorem for the sample covariance matrix $\Rc(\rho)$ 
 from the property: \emph{$\Rc(\rho)$ has $p-1$ eigenvalues $1-\rho$ and one eigenvalue $1+(p-1)\rho$}. As a feature of our proof, it can release the independence of $X_1,\ldots,X_n$ in Theorem~\ref{thm:equilsd},  so long as the decomposition \eqref{xij decomposition} holds. For example, Theorem~\ref{thm:equilsd} holds for $\mathbf X=[X_1,\ldots,X_n]=[x_{ij}]_{p\times n}$ such that $x_{ij}$ are standard normal random variables equi-correlated with nonnegative $\rho<1$. For further applications of the propositions of this subsection to establish LSDs of various random matrices, see \cite{JIMS-LSD}.
\end{rem}
\subsection{Sample correlation matrices}
In this subsection, we establish that the LSD of $\bR$ is  the LSD of $\S$ for a centered ENP with unit variance and $0\le \rho<1$.
To prove Theorem~\ref{thm:equilsd2}, we need the following lemma.
\begin{lem}\label{lem:lim2} Suppose $X_1,\ldots,X_n\stackrel{\mbox{\rm i.\@i.\@d.\@}}{\sim} \mathrm{N}_p(\mathbf{0},\,\sigma^2\Rc(\rho))$, for $\sigma>0$, $0\le \rho< 1$,
and $n,p\to\infty$ with $p/n\to c>0$. Then, it holds almost surely that $\frac{1}{p}\Tr\S\to \sigma^2$.
\end{lem}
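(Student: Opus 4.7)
\medskip
\noindent\textbf{Proof proposal.}
The plan is to exploit the decomposition of Proposition~\ref{lem:decomposition}: writing $x_{ij}=\sigma\sqrt{\rho}\,\eta_j+\sigma\sqrt{1-\rho}\,\xi_{ij}$ with all $\eta_j,\xi_{ij}$ independent standard normal, one has
\begin{align*}
\frac{1}{p}\Tr\S=\frac{1}{np}\sum_{i=1}^{p}\sum_{j=1}^{n}x_{ij}^{2}=A_{n,p}+B_{n,p}+C_{n,p},
\end{align*}
where
\begin{align*}
A_{n,p}=\sigma^{2}\rho\cdot\frac{1}{n}\sum_{j=1}^{n}\eta_{j}^{2},\quad B_{n,p}=\sigma^{2}(1-\rho)\cdot\frac{1}{np}\sum_{i,j}\xi_{ij}^{2},\quad C_{n,p}=\frac{2\sigma^{2}\sqrt{\rho(1-\rho)}}{np}\sum_{j=1}^{n}\eta_{j}\sum_{i=1}^{p}\xi_{ij}.
\end{align*}
The goal is to show $A_{n,p}\to\sigma^{2}\rho$, $B_{n,p}\to\sigma^{2}(1-\rho)$, and $C_{n,p}\to 0$ almost surely, giving the sum $\sigma^{2}$.

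The two diagonal pieces are routine. For $A_{n,p}$ the $\eta_{j}^{2}$ are i.i.d.\ with mean $1$, so the classical strong law of large numbers yields $A_{n,p}\cas\sigma^{2}\rho$. For $B_{n,p}$ one reindexes the doubly-indexed family $\{\xi_{ij}^{2}\}$ as a single sequence of i.i.d.\ $\chi_{1}^{2}$ variables (using that as $n,p\to\infty$ with $p/n\to c$ the total count $np\to\infty$ monotonically along the subsequence of pairs realized by the problem), so again SLLN gives $B_{n,p}\cas\sigma^{2}(1-\rho)$.

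The cross term $C_{n,p}$ is the main obstacle and the only place where care is needed. I would rewrite it as $C_{n,p}=\frac{2\sigma^{2}\sqrt{\rho(1-\rho)}}{np}\sum_{j=1}^{n}W_{j}$ with $W_{j}=\eta_{j}\sum_{i=1}^{p}\xi_{ij}$. The $W_{j}$ are independent across $j$ (since $\eta_{j}$ is independent of $\{\xi_{ij}\}_{i}$), have mean $0$, and variance $E[\eta_{j}^{2}]\cdot E[(\sum_{i}\xi_{ij})^{2}]=p$. Hence the second moment of $C_{n,p}$ is of order $\frac{1}{n^{2}p^{2}}\cdot np=\frac{1}{np}$. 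Chebyshev's inequality then gives
\begin{align*}
\Prb\bigl(|C_{n,p}|>\varepsilon\bigr)\le\frac{K}{np\,\varepsilon^{2}}
\end{align*}
for a constant $K$, and since $p=p(n)\asymp cn$, $\sum_{n}\tfrac{1}{np(n)}<\infty$. Borel--Cantelli yields $C_{n,p}\cas 0$. (An equivalent, slightly slicker route is to note that $\sum_{i}\xi_{ij}=\sqrt{p}\,Z_{j}$ with $Z_{j}$ i.i.d.\ $\mathrm{N}(0,1)$ independent of $\{\eta_{k}\}$, so $C_{n,p}=\tfrac{2\sigma^{2}\sqrt{\rho(1-\rho)}}{\sqrt{p}}\cdot\tfrac{1}{n}\sum_{j}\eta_{j}Z_{j}$; the SLLN applied to $\eta_{j}Z_{j}$ shows the last factor tends to $0$ a.s., and $1/\sqrt{p}$ is bounded.) Combining the three almost-sure limits concludes the lemma.
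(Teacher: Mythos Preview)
Your proof is correct and follows exactly the paper's three-term decomposition via Proposition~\ref{lem:decomposition}; the paper handles all three limits by a blanket appeal to the strong law of large numbers. Your Borel--Cantelli argument for the cross term $C_{n,p}$ is in fact more careful than the paper's one-line justification (note, though, that your alternative route via $Z_j=p^{-1/2}\sum_i\xi_{ij}$ is slightly loose as stated, since $Z_j$ changes with $p$ and so the sequence $\{\eta_jZ_j\}$ is not fixed as $n,p\to\infty$).
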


\begin{proof}
By~\eqref{xwx}, $\Tr \S/p$ is equal to
\begin{align}\label{lem:weak}
    \frac{1}{np}\sum_{i=1}^{p}\sum_{j=1}^nx_{ij}^2= \frac{\sigma^2 \rho}{n}\sum_{j=1}^n\eta_j^2+\frac{2\sigma^2\sqrt{\rho(1-\rho)}}{np}\sum_{i=1}^{p}\sum_{j=1}^n\eta_j\xi_{ij}+\frac{\sigma^2(1-\rho)}{np}\sum_{i=1}^{p}\sum_{j=1}^n\xi_{ij}^2.
\end{align}
Since $\eta_j$ and $\xi_{ij}$ are i.\@i.\@d.\@ standard normal random variables for all $1\le i\le p;\ 1\le j\le n$, by the strong law of large numbers, we have $\sum_{j=1}^n\eta_j^2/n\cas 1$, $\sum_{i=1}^{p}\sum_{j=1}^n\eta_j\xi_{ij}/(np)\cas 0$, and $\sum_{i=1}^{p}\sum_{j=1}^n\xi_{ij}^2/(np)\cas 1$. Therefore, almost surely, \eqref{lem:weak} converges to $\sigma^2$.
\end{proof}
The following is Lemma~2 from~\cite{Bai}. It plays a key role in providing the LSD of the sample correlation matrices formed from an ENP with $0\le \rho<1$. 
\begin{prop}[\protect{\cite[Lemma~2]{Bai}}]\label{prop:bai}
Let $\set{x_{ij} |i,j=1,2,\ldots}$ be a double array of \emph{i.\@i.\@d.\@} random variables and let $\alpha>\fra{1}{2}$, $\beta\ge 0$ and $M>0$ be constants. Then,
\begin{align*}
  &  \max_{i\le Mn^\beta}\left|\sum_{j=1}^n\frac{x_{ij}-m}{n^\alpha}\right|\cas 0\ (n\to\infty)\\
&\iff
\Exp|x_{11}|^{\frac{1+\beta}{\alpha}}< \infty\ \&
\  m=\begin{cases}\displaystyle
\Exp x_{11},&(\alpha\le 1),\\
\displaystyle\text{\rm any},&(\alpha>1).
\end{cases}
\end{align*}
\end{prop}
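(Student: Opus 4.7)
The plan is to establish this uniform strong law by the standard truncation–centering–Borel-Cantelli machinery, treating the two implications separately. Throughout write $K_n=\lfloor Mn^\beta\rfloor$ and $S_n^{(i)}=\sum_{j=1}^n x_{ij}$, so the quantity of interest is $n^{-\alpha}\max_{i\le K_n}|S_n^{(i)}-nm|$.

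For the sufficient direction ($\Leftarrow$), first I would reduce to the centered case. When $\alpha\le 1$ the hypothesis $E|x_{11}|^{(1+\beta)/\alpha}<\infty$ with $(1+\beta)/\alpha\ge 1$ guarantees $Ex_{11}$ exists, and one must take $m=Ex_{11}$; when $\alpha>1$ the factor $n^{1-\alpha}\to 0$ absorbs any constant mean, so the value of $m$ is immaterial. Next I would truncate at the critical level dictated by the moment assumption, setting $\tilde x_{ij}=x_{ij}\mathbf{1}_{\{|x_{ij}|\le j^{\alpha/(1+\beta)}\}}$. The key point is that the moment condition is equivalent, via a layer-cake decomposition, to $\sum_{k\ge 1}k^{\beta}\Prb(|x_{11}|>k^{\alpha/(1+\beta)})<\infty$. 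A Borel-Cantelli argument along the dyadic subsequence $n_k=2^k$, combined with a union bound over the $K_{n_{k+1}}\cdot n_{k+1}$ pairs $(i,j)$, then shows that almost surely only finitely many $(i,j)$ with $i\le K_n$ and $j\le n$ satisfy $x_{ij}\ne \tilde x_{ij}$; hence the tail part contributes $o(n^\alpha)$ uniformly in $i$. For the centered truncated part I would control $E\tilde x_{ij}-m$ (which is $O(j^{-\alpha/(1+\beta)\cdot((1+\beta)/\alpha -1)})$ and sums to $o(n^\alpha)$ because $\alpha>1/2$) and apply Kolmogorov's maximal inequality to $\sum_{j=1}^n(\tilde x_{ij}-E\tilde x_{ij})$, followed by a union bound over $i\le K_n$ and a final Borel-Cantelli along a dyadic subsequence to pass from $n_k$ to the full sequence through monotonicity.

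For the necessary direction ($\Rightarrow$), assume the a.s.\ convergence of the maximum to zero. The single entry $n^{-\alpha}|x_{in}-m|$ is dominated by the maximum for each $i\le K_n$, so $n^{-\alpha}\max_{i\le K_n}|x_{in}-m|\to 0$ almost surely. Since $\{x_{in}\}_{i\le K_n}$ are i.i.d.\ copies of $x_{11}-m$, applying Borel-Cantelli in the converse direction forces $\sum_{n\ge 1}K_n\Prb(|x_{11}-m|>\varepsilon n^\alpha)<\infty$ for every $\varepsilon>0$, which via the layer-cake identity is equivalent to $E|x_{11}-m|^{(1+\beta)/\alpha}<\infty$, hence to the claimed moment condition on $x_{11}$. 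When $\alpha\le 1$, the strong law of large numbers applied to $n^{-1}S_n^{(1)}$ (together with $n^{1-\alpha}\ge 1$) forces $m=Ex_{11}$; when $\alpha>1$, no constraint on $m$ arises, matching the stated dichotomy.

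The main obstacle I expect is threading the truncation level $j^{\alpha/(1+\beta)}$ through the argument so that (a) the tail probabilities, after the union bound over $K_n\cdot n$ many pairs, are exactly summable under the hypothesized moment, and (b) the centered truncated maximum is controlled by a maximal inequality whose variance bound, when inflated by the union bound over $i\le K_n$ and summed over a dyadic subsequence, is still summable — this is where the hypothesis $\alpha>1/2$ is critical, as it provides the slack needed for both the bias from truncation and the Kolmogorov-type variance estimate to converge. Everything else is a careful bookkeeping of these two competing sums.
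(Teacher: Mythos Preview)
The paper does not supply its own proof of this proposition: it is quoted verbatim as Lemma~2 of Bai--Yin~\cite{Bai} and used as a black box in the proof of Theorem~\ref{thm:equilsd2}. There is therefore nothing in the present paper to compare your argument against.

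For what it is worth, your outline follows the same truncation\,/\,Borel--Cantelli\,/\,maximal-inequality scheme as the original Bai--Yin proof, and the overall architecture (reduce to centered variables, split into truncated and tail parts, union bound over $i\le K_n$, dyadic subsequence) is correct. One point to tidy up in a full write-up: the summability you assert, $\sum_{k\ge1}k^{\beta}\Prb(|x_{11}|>k^{\alpha/(1+\beta)})<\infty$, is \emph{not} equivalent to $\Exp|x_{11}|^{(1+\beta)/\alpha}<\infty$ as stated---a layer-cake computation shows that sum corresponds to the $(1+\beta)^2/\alpha$ moment. What the moment hypothesis actually gives you is $\sum_{k\ge1}\Prb(|x_{11}|>k^{\alpha/(1+\beta)})<\infty$ (equivalently $\sum_n n^{\beta}\Prb(|x_{11}|>n^{\alpha})<\infty$), and the extra factor $K_n\sim n^{\beta}$ from the union bound over rows must be absorbed elsewhere, typically by indexing the truncation by the pair $(i,j)$ or by truncating at level $n^{\alpha}$ rather than $j^{\alpha/(1+\beta)}$. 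This is a bookkeeping issue, not a structural gap, but it is exactly the place where the constants have to be threaded carefully, as you anticipated.
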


\begin{thm}\label{thm:equilsd2}
Suppose $X_1,\ldots,X_n\stackrel{\mbox{\rm i.\@i.\@d.\@}}{\sim} \mathrm{N}_p(\bm{\mu},\,\mathbf{D}\Rc(\rho)\mathbf{D})$ for a deterministic vector $\bm{\mu}\in\R^p$, a deterministic nonsingular diagonal matrix $\mathbf{D}\in\R^{p\times p}$, and
$0\le \rho< 1$. Suppose $n,p\to \infty$ with $p/n\to c>0$. Then, it holds almost surely that $F^{\bR}$ weakly converges to $F_{c,1-\rho}$. 
\end{thm}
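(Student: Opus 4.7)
The plan is to reduce to the canonical case $\bm\mu=\mathbf 0$, $\mathbf D=\mathbf I$, and then show that the sample correlation matrix $\bR$ has the same LSD as the centered sample covariance matrix $\E\E^\top$, whose LSD equals $F_{c,1-\rho}$ by Theorem~\ref{thm:equilsd} applied with $\sigma=1$. The reduction is immediate: centering kills the translation by $\bm\mu$, while a nonsingular diagonal rescaling of the rows of $\X$ only produces a signature similarity $\diag(\pm 1)\,\bR\,\diag(\pm 1)$ after row normalization, which preserves $F^{\bR}$. Assume therefore $X_j\iidsim \N_p(\mathbf 0,\Rc(\rho))$, apply Proposition~\ref{lem:decomposition}, and set $s_i=\|\x_i-\bar\x_i\|/\sqrt n$ and $\mathbf D_s=\diag(s_1,\ldots,s_p)$. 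Then the $i$-th row of $\Y$ is $s_i^{-1}$ times the $i$-th row of $\E$, so $\bR=\mathbf D_s^{-1}\,(\E\E^\top)\,\mathbf D_s^{-1}$.

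The first core step is to prove $\max_{1\le i\le p}|s_i^2-1|\cas 0$. Expanding via \eqref{xij decomposition} writes $s_i^2$ as the sum of an $\eta$-only piece $\rho\cdot n^{-1}\sum_j(\eta_j-\bar\eta)^2$ (independent of $i$ and tending to $\rho$ by SLLN), a cross piece $2\sqrt{\rho(1-\rho)}\cdot n^{-1}\sum_j(\eta_j-\bar\eta)\xi_{ij}$ (using $\sum_j(\eta_j-\bar\eta)=0$ to drop the $\bar\xi_i$ contribution), and a $\xi$-only piece $(1-\rho)\bigl(n^{-1}\sum_j\xi_{ij}^2-\bar\xi_i^2\bigr)$. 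Uniform convergence for $i\le p=O(n)$ of the $\xi$-only piece to $1-\rho$ follows from Proposition~\ref{prop:bai} with $\alpha=1$, $\beta=1$ applied to the i.i.d.\@ array $\xi_{ij}^2$ (requiring $\Exp\xi_{ij}^4<\infty$, which holds by normality). For the cross piece, conditioning on $\eta$ reduces it to a centered Gaussian in $\xi$ with variance $O(1/n)$ almost surely, so a standard Gaussian-maximum bound yields $\max_i|\cdot|=O(\sqrt{\log p/n})\cas 0$; the same bound controls $\max_i\bar\xi_i^2$.

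The second and main step converts this into $F^{\bR}\cas F^{\E\E^\top}$. Setting $\mathbf F=\mathbf D_s^{-1}-\mathbf I$ (so $\|\mathbf F\|_{\mathrm{op}}\cas 0$), we expand $\bR-\E\E^\top=\mathbf F\,\E\E^\top+\E\E^\top\,\mathbf F+\mathbf F\,\E\E^\top\,\mathbf F$, and since $\E\E^\top$ is positive semidefinite with $p^{-1}\Tr(\E\E^\top)=p^{-1}\sum_i s_i^2\cas 1$, the trace-norm inequality gives $p^{-1}\|\bR-\E\E^\top\|_1\le (2\|\mathbf F\|_{\mathrm{op}}+\|\mathbf F\|_{\mathrm{op}}^2)(1+o(1))\cas 0$. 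Via the resolvent identity, this implies $|m_{\bR}(z)-m_{\E\E^\top}(z)|\le p^{-1}\|\bR-\E\E^\top\|_1/|\Im z|^2\cas 0$ for each $z\in\mathbb C^+$, and Stieltjes-transform continuity (as alluded to in Remark~\ref{rem:a}) combined with Theorem~\ref{thm:equilsd} delivers $F^{\bR}\cas F_{c,1-\rho}$. I expect this final step to be the main obstacle, because Proposition~\ref{rank} does not apply directly ($\rank(\bR-\E\E^\top)$ can be as large as $p$), so the argument must route through the trace norm, which crucially uses the uniform estimate on the $s_i$'s rather than merely the averaged trace identity of Lemma~\ref{lem:lim2}.
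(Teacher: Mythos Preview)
Your argument is correct and takes a genuinely different route from the paper's. The paper does not go through operator/trace-norm perturbation and Stieltjes transforms; instead it applies Bai's fourth-power L\'evy-distance inequality \cite[Lemma~2.7]{bai4},
\[
\Le^4\bigl(F^{\bR},F^{\E\E^\top}\bigr)\;\le\; \frac{2}{p}\Tr(\Y\Y^\top+\E\E^\top)\cdot\frac{1}{p}\Tr\bigl((\Y-\E)(\Y-\E)^\top\bigr),
\]
and reduces everything to showing that the second factor, which equals $p^{-1}\sum_i(1-s_i)^2$, tends to zero. In principle this needs only an \emph{averaged} control of the row norms $s_i$, and the paper extracts it through a somewhat indirect combination of Lemma~\ref{lem:lim2}, concavity of $\sqrt{\,\cdot\,}$, and uniform $\min/\max$ estimates drawn from Proposition~\ref{prop:bai}. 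Your route instead establishes the stronger uniform statement $\max_i|s_i^2-1|\cas 0$ up front---and your conditional Gaussian-maximum argument for the cross term is in fact cleaner than the paper's direct invocation of Proposition~\ref{prop:bai} on the array $\eta_j\xi_{ij}$, which is not i.i.d.\ in the double index---and then finishes in one stroke via $\|\mathbf D_s^{-1}-\mathbf I\|_{\mathrm{op}}\cas 0$. Your approach makes the role of the uniform estimate explicit and is conceptually tidier; the paper's stays entirely within the L\'evy-metric framework of Propositions~\ref{huber1}--\ref{rank} without any Stieltjes-transform machinery. One small correction to your closing remark: the paper does not try to use the rank inequality (Proposition~\ref{rank}) for this comparison either---it is precisely the trace-based L\'evy bound above, not Proposition~\ref{rank}, that replaces it.
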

\begin{proof}
Because the sample correlation matrices $\bR$ is invariant under scaling  of variables, we can assume $\mathbf{D}=\mathbf{I}$ without loss of generality. Moreover, we can safely assume that $\bm \mu=\bm 0$ to prove that $F^{\bR}$ weakly converges to $F_{c,1-\rho}$ almost surely, because $\bR$ is invariant under sifting.

To prove that $F^{\bR}$ weakly converges to $F_{c,1-\rho}$ almost surely, it suffices to show that $\Le\left(F^{\bR},\ F^{\E\E^\top}\right)\cas0$, because Theorem~\ref{thm:equilsd} implies  $\Le\left(F^{\E\E^\top},\, F_{c,1-\rho}\right)\cas0$.
By $\bR=\Y\Y^\top$, \cite[Lemma~2.7]{bai4} gives an upper bound on the fourth power of the L\'evy distance 
\begin{align}
\label{levy distance:Cp and EET}
\Le^4\left(F^{\bR},\  F^{\E\E^\top}\right)\le \frac{2}{p}\Tr\left(\Y\Y^\top+\E\E^\top\right)\times 
 \frac{1}{p}\Tr\left(\left(\Y - \E\right)\left(\Y - \E\right)^\top\right).
\end{align}
In the first factor of the right side, $\Tr(\Y\Y^\top)/p=\Tr\bR/p=1$. By $\E=n^{-1/2}(\X -\bar\X)$,
\begin{align}\label{levi2c}
\frac{\Tr(\E\E^\top)}{p}=\frac{1}{np}\sum_{i=1}^{p}\sum_{j=1}^n\left(x_{ij}-\bar x_i\right)^2=\frac{1}{np}\left(\sum_{i=1}^{p}\sum_{j=1}^nx_{ij}^2-n\sum_{i=1}^{p}\bar x_i^2\right)
\le\sum_{i=1}^{p}\sum_{j=1}^n\frac{x_{ij}^2}{np}.
\end{align}
Since the last term almost surely converges to a finite deterministic value
by Lemma~\ref{lem:lim2}, $\limsup |\Tr(\E\E^\top)/p|$ is almost surely finite. 
Therefore, it is sufficient to verify that the second factor of the right side of \eqref{levy distance:Cp and EET} converges almost surely to 0:
\begin{align}
\label{lev}\frac{1}{p}\Tr\left(\left(\Y-\E\right)\left(\Y - \E\right)^\top\right)\cas 0.
\end{align}

Similarly, to prove that $F^{\tilde \bR}$ weakly converges to $F_{c,1-\rho}$ almost surely under $\bm{\mu}=0$, it also suffices to confirm
\begin{align}
\label{levi}
\frac{1}{p}\Tr\left(\left(\mathbf{\tilde{\Y}}-n^{-1/2}\X\right)\left(\mathbf{\tilde{\Y}}-n^{-1/2}\X \right)^\top\right)\cas 0.
\end{align}

\medskip

(a) \textit{The proof of~\eqref{levi}}.
The left side of~\eqref{levi} is $\tilde d_1-2\tilde d_2$ where
\begin{align}\label{levi2a}
\tilde d_1=\frac{1}{np}\sum_{i=1}^{p}\sum_{j=1}^nx_{ij}^2-1,\quad \tilde d_2=\frac{1}{p}\sum_{i=1}^p\left(\frac{\|\x_{i}\|}{\sqrt{n}}-1\right),
\end{align}
because $\Tr(\mathbf{\tilde{\Y}} \mathbf{\tilde{\Y}}^\top)=\Tr(\tilde \bR)=p$ and 
$ \Tr(\X \mathbf{\tilde{\Y}}^\top)=\Tr\left(\left[ \X\frac{\x^\top_{1}}{\|\x_{1}\|},\ldots,\X\frac{\x^\top_{p}}{\|\x_{p}\|}\right]\right)=\sum_{i=1}^p \fra{\x_{i}\x_{i}^\top}{\|\x_{i}\|}=\sum_{i=1}^p \|\x_{i}\|.$
Because $\bf \mu=0$, Lemma~\ref{lem:lim2} implies $\tilde d_1\cas0$. On the other hand, we  prove
\begin{align}\label{suffices}
\tilde d_2=\frac{1}{p}\sum_{i=1}^p \frac{\|\x_{i}\|}{\sqrt{n}}-1=\frac{1}{p}\sum_{i=1}^p\sqrt{\frac{1}{n}\sum_{j=1}^nx_{ij}^2}-1\cas 0.
\end{align}
By the concavity of $\sqrt{x}$ and Lemma~\ref{lem:lim2},
$
 \sqrt{{\sum_{i=1}^p\sum_{j=1}^nx_{ij}^2}/(np)}-1\cas 0
$, so $\limsup_{\substack{n,p\to\infty\\ p/n\to c}}\tilde d_2\ge0$.
Therefore, we have only to demonstrate
    $\liminf_{\substack{n,p\to\infty\\ p/n\to c}}\tilde d_2\ge0$.

By~\eqref{suffices}, it suffices to show
\begin{align}\label{suffices2}
    \liminf_{\substack{n,p\to\infty\\ p/n\to c}}\frac{1}{n}\sum_{j=1}^nx_{ij}^2\ge 1.
\end{align}
By the decomposition \eqref{xij decomposition},
\begin{align}&\frac{1}{n}\sum_{j=1}^nx_{ij}^2
=
 \rho \frac{1}{n}\sum_{j=1}^n\eta_j^2+2\sqrt{\rho(1-\rho)}\frac{1}{n}\sum_{j=1}^n\eta_j\xi_{ij}+(1-\rho)\frac{1}{n}\sum_{j=1}^n\xi_{ij}^2.
 \label{j average}
\end{align}
Following the strong law of large numbers with $\eta_{j}\stackrel{\mbox{\rm i.\@i.\@d.\@}}{\sim}\mathrm{N}(0,1
)$, the first term 
\begin{align}\label{lls}
\frac{1}{n}\sum_{j=1}^n\eta_j^2\cas \Exp\left(\eta_1^2\right)=1,
\end{align}
as $n\to\infty$. The sum of the second and the third terms of the right side of~\eqref{j average} is at least
\begin{align}
  2\sqrt{\rho(1-\rho)}\min_{1\le i\le p}\frac{1}{n}\sum_{j=1}^n\eta_j\xi_{ij}+(1-\rho)\min_{1\le i\le p}\frac{1}{n}\sum_{j=1}^n\xi_{ij}^2\label{unc}. 
\end{align}

By Proposition~\ref{prop:bai} with $\alpha=\beta=1$, $\Exp(\eta_1\xi_{11})=0$, and $\Exp(\xi_{11}^2)=1$, as $n\to\infty$,
\begin{align}
   &\label{x1x2b}\left|\min_{1\le i\le Mn}\frac{1}{n}\sum_{j=1}^n\eta_j\xi_{ij}\right|\le\max_{1\le i\le Mn}\left|\frac{1}{n}\sum_{j=1}^n\eta_j\xi_{ij}\right|\cas 0\\
   &\label{x1x2d}\left|\min_{1\le i\le Mn}\frac{1}{n}\sum_{j=1}^n\xi_{ij}^2-1\right|\le\max_{1\le i\le Mn}\left|\frac{1}{n}\sum_{j=1}^n\xi_{ij}^2-1\right|\cas 0.
\end{align}
By~\eqref{x1x2b}, almost surely, there exists $N_0(m,M)\in \mathbb{N}$ such that
\begin{align}\label{1/m1}
\left|\min_{1\le i\le Mn}\frac{1}{n}\sum_{j=1}^n\eta_j\xi_{ij}\right|\le \frac{1}{m}
\end{align}
for all $m>0$ and for all $n\ge N_0(m,M)$. Since $n,p\to\infty$ with $p/n\to c>0$, there exists $N_1(m)$ such that $(c-1/m)n<p<(c+1/m)n$. Therefore, by~\eqref{1/m1}, for all $n\ge\max(N_1(m),N_0(m,(c-1/m)))$,
\begin{align*}
    -\frac{1}{m}<\min_{1\le i\le \left(c+\frac{1}{m}\right)n}\frac{1}{n}\sum_{j=1}^n\eta_j\xi_{ij}\le \min_{1\le i\le p}\frac{1}{n}\sum_{j=1}^n\eta_j\xi_{ij}\le \min_{1\le i\le \left(c-\frac{1}{m}\right)n}\frac{1}{n}\sum_{j=1}^n\eta_j\xi_{ij}<\frac{1}{m}.
\end{align*}
As a result, for $n\to\infty$, $\min_{1\le i\le p}n^{-1}\sum_{j=1}^n\eta_j\xi_{ij}\cas 0$. 

By~\eqref{x1x2d}, almost surely, there exists $N_2(m,M)$ such that 
\begin{align*}
\left|\min_{1\le i\le Mn}\frac{1}{n}\sum_{j=1}^n\xi_{ij}^2-1\right|\le \frac{1}{m}
\end{align*}
for all $m>0$ and for all $n\ge N_1(m,M)$. Hence, for all $n\ge\max(N_1(m),N_2(m,(c-1/m)))$,
\begin{align*}
    -\frac{1}{m}<\min_{1\le i\le \left(c+\frac{1}{m}\right)n}\sum_{j=1}^n\frac{\xi_{ij}^2-1}{n}\le \min_{1\le i\le p}\sum_{j=1}^n\frac{\xi_{ij}^2-1}{n}
    \le \min_{1\le i\le \left(c-\frac{1}{m}\right)n}\sum_{j=1}^n\frac{\xi_{ij}^2-1}{n}<\frac{1}{m}.
\end{align*}
Therefore, as $n\to\infty$, $\min_{1\le i\le p}n^{-1}\sum_{j=1}^n\xi_{ij}^2\cas 1$.  
As a result, $\eqref{j average}\cas 1-\rho$. By this and \eqref{lls}, $\eqref{suffices2}$ follows. 
Thus, $\tilde d_2\cas0$. Therefore, \eqref{levi} is hold.

\bigskip

(b) \textit{The proof of~\eqref{lev}}. First, the left side of~\eqref{lev} is ${d}_1-2{d}_2$ where
\begin{align}\label{levi2b}
d_1=\frac{1}{np}\sum_{i=1}^{p}\sum_{j=1}^n\left(x_{ij}-\bar x_i\right)^2-1,\quad d_2=\frac{1}{p}\sum_{i=1}^p\left(\frac{\|\x_{i}-\bar \x_i\|}{\sqrt{n}}-1\right),
\end{align}
because $\Tr(\Y\Y^\top)=\Tr\bR=p$ and
\begin{align*}
    \Tr(\E\Y^\top)&=\Tr\left(\left[ \E\frac{(\x_{1}-\bar \x_1)^\top}{\|\x_{1}-\bar \x_1\|},\ldots,\E\frac{(\x_{p}-\bar \x_p)^\top}{\|\x_{p}-\bar \x_p\|}\right]\right)\\
    &=\sum_{i=1}^p \frac{(\x_{i}-\bar \x_i)(\x_{i}-\bar \x_i)^\top}{\sqrt{n}\|\x_{i}-\bar \x_i\|}=\sum_{i=1}^p \frac {\|\x_{i}-\bar \x_i\|}{\sqrt{n}}.
\end{align*}
Secondly,  we  show
$d_1\cas0$ and $d_2\cas0$.
By \eqref{levi2a}, \eqref{levi2b}, and \eqref{levi2c}, 
\begin{align*}
|d_1-\tilde d_1|&=\frac{1}{np}\left|\sum_{i=1}^{p}\sum_{j=1}^nx_{ij}^2-\sum_{i=1}^{p}\sum_{j=1}^n\left(x_{ij}-\bar x_i\right)^2\right|\\&=\frac{1}{np}\left|
\sum_{i=1}^{p}\sum_{j=1}^nx_{ij}^2-\sum_{i=1}^{p}\sum_{j=1}^nx_{ij}^2+n\sum_{i=1}^{p}\bar x_i^2\right|=\frac{1}{p}\sum_{i=1}^p\left|\bar x_i\right|^2.
\end{align*} 

Thus, by decomposition~\eqref{xij decomposition}, 
\begin{align*}
\frac{1}{p}\sum_{i=1}^p\left|\bar x_i\right|^2\le \left(\max_{1\le i\le p}|\bar x_i|\right)^2
&=\left(\max_{1\le i\le p}\left|\sum_{j=1}^n\frac{\sqrt{\rho}\eta_j+\sqrt{1-\rho}\xi_{ij}}{n}\right|\right)^2\\
&\le \left(\left|\sum_{j=1}^n\frac{\sqrt{\rho}\eta_j}{n}\right|+\max_{1\le i\le p}\left|\sum_{j=1}^n\frac{\sqrt{1-\rho}\xi_{ij}}{n}\right|\right)^2.\end{align*}
By Proposition~\ref{prop:bai}, almost surely, there exists $N_3(m,M)$ such that for all $n>N_3(m,M)$, we have $|\max_{1\le i\le Mn}|$ $n^{-1}\sum_{j=1}^n\xi_{ij}||\le m^{-1}$. Hence, by $n,p\to\infty$ with $p/n\to c>0$, for all $n\ge\max(N_1(m),N_3(m,(c+1/m)))$, we have
$-m^{-1}<\max_{1\le i\le (c-m^{-1})n}|n^{-1}\sum_{j=1}^n\xi_{ij}|\le\max_{1\le i\le p}|n^{-1}\sum_{j=1}^n\xi_{ij}|\le\max_{1\le i\le (c+m^{-1})n}$ $|n^{-1}\sum_{j=1}^n\xi_{ij}|<m^{-1}$. Therefore, as $n\to\infty$, $\max_{1\le i\le p}|n^{-1}\sum_{j=1}^n\xi_{ij}|\cas 0$.
As a result, by the triangle inequality and law of large numbers, 
\begin{align}
|d_1-\tilde d_1|\le\left(\left|\sum_{j=1}^n\frac{\sqrt{\rho}\eta_j}{n}\right|+\max_{1\le i\le p}\left|\sum_{j=1}^n\frac{\sqrt{1-\rho}\xi_{ij}}{n}\right|\right)^2\cas 0.
\label{levi2}
\end{align}
Thus, $d_1\cas 0$ because $\tilde d_1=(np)^{-1}\sum_{i=1}^{p}\sum_{j=1}^nx_{ij}^2-1\cas 0$. 

In contrast,
\begin{align*}
|d_2-\tilde d_2|&\le\frac{1}{p\sqrt{n}}\sum_{i=1}^p\left|\sqrt{\sum_{j=1}^n(x_{ij}-\bar x_i)^2}-\sqrt{\sum_{j=1}^nx_{ij}^2}\right|.\end{align*}
Note that $|\sqrt{r}-\sqrt{s}|\le |r-s|/\sqrt{s}$ $(r,s\ge0)$. Let $r=\frac{1}{n}\sum_{j=1}^n (x_{ij}-\bar{x_i})^2$ and $s= \frac{1}{n}\sum_{j=1}^n x_{ij}^2$. Then,
$|r-s|=(\bar x_i)^2$. 
By \eqref{j average}, $s\ge$\eqref{unc}. Thus, 
$$|d_2-\tilde d_2|\le
\frac{1}{p}\sum_{i=1}^p|\bar x_i|^2 \cdot \left(\frac{1}{n}\sum_{j=1}^nx_{ij}^2\right)^{-1/2}.$$ 
The first factor $\frac{1}{p}\sum_{i=1}^p\left|\bar x_i\right|^2$ on the right side is $|d_1-\tilde d_1|$, which converges almost surely to 0 by \eqref{levi2}. By \eqref{suffices}, almost surely, $\liminf_{\substack{n,p\to\infty\\ p/n\to c}}\left(\frac{1}{n}\sum_{j=1}^nx_{ij}^2\right)^{-1/2}\le 1$.
Thus, $|d_2-\tilde d_2|\cas0$.
Therefore, $d_2\cas 0$ since $\tilde d_2\cas 0$.
\end{proof}
 Proposition~\ref{prop:ji3}~(\cite[Theorem~1.2]{Ji}) assumes that $x_{ij}$ $(1\le i \le p;$ $1\le j\le n)$ are i.\@i.\@d.\@ random variables with finite second moments.
To prove the proposition, Jiang directly
applied Proposition~\ref{prop:bai} to establish \eqref{suffices2}.
In contrast,  Theorem~\ref{thm:equilsd2} assumes $\Exp(x_{ij}x_{kj})=\rho$, so
\eqref{suffices2} is not immediate from Proposition~\ref{prop:bai}.
Therefore, we take advantage of the decomposition~\eqref{xij decomposition} to represent $x_{ij}$ as a linear combination of independent, standard normal random variables $\eta_{j}$ and $\xi_{ij}$ for $1\le i\le p,\, 1\le j\le n$ to prove~\eqref{suffices2}.

\section{Analysis of stopping rules for principal components and factors}
\label{sec:components retention rules}
In the following subsections, we mathematically formulate the Guttman-Kaiser criterion and CPV rule. Under an ENP with $0\le \rho<1$, the limiting proportion of principal components and factors retained by the Guttman-Kaiser criterion and CPV rule are then examined using the LSDs of sample correlation matrices and sample covariance matrices. Moreover, we compare these findings to elaborate on Kaiser's observation that the Guttman-Kaiser criterion underfactors for small $c$.

\subsection{Guttman-Kaiser criterion}\label{subsec:GK}
Guttman-Kaiser criterion is a popular tool for assessing the dimensionality of empirical data.
This criterion is based on the arithmetical mean of the eigenvalues of a sample covariance matrix~\cite[p.~47]{jackson}. The principal component with an eigenvalue lower than the average will be deleted. 
Because the average eigenvalues of a sample correlation matrix are one, any principal component with the corresponding eigenvalue less than one is discarded. 
Guttman-Kaiser criterion asserts that each retained principal component should explain more variation than a single variable\textemdash which will be 1.0 when all variables are standardized.
For EFA, Guttman-Kaiser criterion keeps factors corresponding to eigenvalues larger than 1.0 from the sample correlation matrix $\bR$.
\comment{For EFA, H.~F.~Kaiser who introduced Guttman-Kaiser criterion adverted to the following~\cite{kai}: 
\begin{Quotation}\label{quotation}\rm ...
Humphreys (personal communication, 1984) asserts that, when the number $p$ of attributes is large and the ``average'' intercorrelation is small, the Kaiser-Guttman rule will overfactor. Tucker (personal communication, 1984) asserts that, when the number of attributes $p$ is small and the structure of the attributes is particularly clear, the Kaiser-Guttman rule will underfactor. ...\end{Quotation}
Here, ``overfactor''~(``underfactor'', resp.) means ``overestimate''~(``underestimate'', resp.) the number of factors in the factor model. 
According to~\cite{kai2}, `the ``average'' intercorrelation' is meant by $\rho$ and ``the structure of the attributes is particularly clear''  corresponds to an equi-correlated structure.
We regard ``the number of attributes $p$ is small'' as $c\to 0.$ \cite{Criterion} presumed that in zero-factor model for EFA, half the eigenvalues of sample correlation matrices would be greater than unity with some variation caused by sampling error. 
By simulation study, \cite{Criterion} observed that the number of factors Guttman-Kaiser criterion retains is about $p/2$ for independent ($\rho=0$) normal random variables (zero communality = zero factors). 
In this subsection, we precisely compute $q/p$ in $n,p\to\infty$, $p/n\to c>0$ where $q$ is the number of principal components or factors that the Guttman-Kaiser criterion preserves.
We apply Mar\v{c}enko-Pastur distributions to illustrate the limiting behavior of the Guttman-Kaiser criterion. By this result, we expound Kaiser's observation in Quotation~\ref{quotation} and a simulation study of \cite{Criterion} about how many factors Guttman-Kaiser criterion retains in EFA.}

Let $\M$ be a real symmetric positive semi-definite matrix of order $p$. Suppose that Guttman-Kaiser criterion retains $q$ eigenvalues of $\M$. Then, the ratio $q/p$ is represented as follows. Here, for a distribution function $F$, $\overline{F}$ represents $1-F$, which is called the \emph{complementary distribution} function of $F$.
\begin{definition}\label{def:gk}
For a real symmetric matrix $\M$ of order $p$, define the following 
$$GK^\M=\overline{F^\M}\left(\frac{1}{p}\Tr \M\right).$$
\end{definition}
For $\M$, we  consider a sample correlation matrix $\bR$ and a sample covariance matrix $\S$. Here, $\bR$ and $\S$ are formed from an ENP with $0\le \rho<1$. We  calculate the limits of random variables $GK^{\bR}$ and $GK^\S$ in $n,p\to \infty$ with $p/n\to c>0$.
\begin{definition}
For $c>0$ and $0\le \rho<1$, define the following random variable
$$GK_{c,\rho}=\overline{F_{c,1}}\left(\frac{1}{1-\rho}\right).$$
\end{definition}

\begin{thm}\label{conv3} Suppose $X_1,\ldots,X_n\stackrel{\mbox{\rm i.\@i.\@d.\@}}{\sim} \mathrm{N}_p(\bm{\mu},\,\mathbf{D}\Rc(\rho)\mathbf{D})$ for a deterministic vector $\bm{\mu}\in\R^p$, a deterministic nonsingular diagonal matrix $\mathbf{D}\in\R^{p\times p}$, and $0\le \rho< 1$. Suppose $n,p\to \infty$ with $p/n\to c>0$. Then almost surely, 
\begin{enumerate}\rm
    \item $GK^\bR\to GK_{c,\rho},$
    \item $\,GK^\S\to GK_{c,\rho}$, for $\bm{\mu}=\bm{0}$ and $\mathbf{D} = \sigma\mathbf{I}$ with $\sigma > 0$.
\end{enumerate}
\end{thm}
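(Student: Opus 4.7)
The plan is to reduce both statements to the following elementary lemma: if distribution functions $F_n$ weakly converge to $F$ and $x_n\to x$ with $x$ a continuity point of $F$, then $F_n(x_n)\to F(x)$, hence $\overline{F_n}(x_n)\to \overline{F}(x)$. I would prove this by sandwiching: for any $\varepsilon>0$, pick continuity points $y_1<x<y_2$ of $F$ with $F(y_2)-F(y_1)<\varepsilon$; once $y_1<x_n<y_2$, monotonicity gives $F_n(y_1)\le F_n(x_n)\le F_n(y_2)$, and weak convergence pins both outer terms within $\varepsilon$ of $F(x)$. I will also repeatedly use the scaling identity $F_{c,\sigma^{2}}(x)=F_{c,1}(x/\sigma^{2})$ from the MP definition in the paper, so that $\overline{F_{c,1-\rho}}(1)=\overline{F_{c,\sigma^{2}(1-\rho)}}(\sigma^{2})=\overline{F_{c,1}}(1/(1-\rho))=GK_{c,\rho}$.

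For part (1), the evaluation point is deterministic. Since $\bR$ is a sample correlation matrix, every diagonal entry equals $1$, hence $p^{-1}\Tr\bR=1$ and $GK^{\bR}=\overline{F^{\bR}}(1)$. Theorem~\ref{thm:equilsd2} says $F^{\bR}$ weakly converges to $F_{c,1-\rho}$ almost surely. Because $c>0$ and $0\le\rho<1$, the MP support $[(1-\rho)(1-\sqrt c)^{2},(1-\rho)(1+\sqrt c)^{2}]$ is non-degenerate and the only possible discontinuity of an MP distribution is the point mass at the origin (appearing when $c>1$); in particular $1\neq 0$ is a continuity point of $F_{c,1-\rho}$. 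Applying the elementary lemma with $x_n=x=1$ and then the scaling identity gives $GK^{\bR}\to GK_{c,\rho}$ almost surely.

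For part (2), under $\bm\mu=\bm 0$ and $\mathbf D=\sigma\mathbf I$, Theorem~\ref{thm:equilsd} gives $F^{\S}\to F_{c,\sigma^{2}(1-\rho)}$ weakly, almost surely, while Lemma~\ref{lem:lim2} gives $p^{-1}\Tr\S\to\sigma^{2}$ almost surely. On the full-probability intersection of these two events, the elementary lemma applies with the random sequence $x_n=p^{-1}\Tr\S$ and limit $x=\sigma^{2}$; note that $\sigma^{2}>0$ is again a continuity point of the limiting MP distribution for the same reason as above. Hence $GK^{\S}=\overline{F^{\S}}(p^{-1}\Tr\S)\to\overline{F_{c,\sigma^{2}(1-\rho)}}(\sigma^{2})=GK_{c,\rho}$ almost surely.

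The only nontrivial point is part (2), where both the distribution function and its argument are random and depend on $n,p$; the elementary lemma is exactly the tool needed to pass to the limit simultaneously, and the hypotheses $\sigma>0$ and $\rho<1$ are what guarantee that the argument converges to a continuity point of the MP limit. Everything else is the MP scaling identity and the fact $\Tr\bR=p$.
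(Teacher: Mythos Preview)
Your proof is correct and follows essentially the same route as the paper: both parts rest on Theorem~\ref{thm:equilsd2}, Theorem~\ref{thm:equilsd}, Lemma~\ref{lem:lim2}, and the observation that the relevant evaluation point ($1$ in part~(1), $\sigma^{2}$ in part~(2)) is a continuity point of the limiting Mar\v{c}enko--Pastur law. Your packaging of part~(2) via the single ``elementary lemma'' ($F_n\Rightarrow F$, $x_n\to x$ continuity point $\Rightarrow F_n(x_n)\to F(x)$) is in fact a bit cleaner than the paper's two-step decomposition $GK^{\S}=Z_{p,\rho}+\overline{F_{c,\sigma^2(1-\rho)}}(\Tr\S/p)$, which implicitly requires the same sandwich argument to justify $Z_{p,\rho}\to 0$ at a moving evaluation point.
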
 
\begin{proof}
(1) Because $\Tr \bR/p=1$ is a continuity point of $F_{c,1-\rho}$,  Theorem~\ref{thm:equilsd2} implies that, in limit $n,p\to\infty,p/n\to c>0$, almost surely,  $GK^{\bR}=\overline{F^{\bR}}(1)\to \overline{F_{c,1-\rho}}(1)=GK_{c,\rho}$.
\medskip

(2) Because $\Tr \S=\sum_{i=1}^{p}\sum_{j=1}^{n}x_{ij}^2/n=0$ implies $x_{ij}=0$ and because $x_{ij}$ are continuous random variables, $\Prb(\Tr \S/p=0)=0$. As a result, $\Tr \S/p$  is a continuity point of the distribution function $F_{c,\sigma^2(1-\rho)}$ almost surely.
Because $GK^\S= \overline{F^\S}\left( \Tr \S/p\right)$ and 
Theorem \ref{thm:equilsd},  in the limit of $n,p\to\infty$ with
 $p/n\to c>0,$ the following random variable
\begin{align}
Z_{p,\rho}:= GK^\S  - \overline{ F_{c,\sigma^2(1-\rho)}}\left(\frac{1}{p}\Tr \S\right) \label{zprho}
\end{align}
converges to 0 almost surely.
By Lemma~\ref{lem:lim2},  
${\Tr \S}/{p}\cas \sigma^2$.
Besides,
 $\overline{F_{c,\sigma^2(1-\rho)}} $ is continuous at $\sigma^2$ because $\sigma^2>0$.
Thus, $\overline{F_{c,\sigma^2(1-\rho)}}(\Tr \S /p )\cas \overline{F_{c,\sigma^2(1-\rho)}}(\sigma^2)$.
Hence, in the limit of $n,p\to\infty$ with
 $p/n\to c>0,$ almost surely, $$GK^\S = Z_{p,\rho}+\overline{F_{c,\sigma^2(1-\rho)}}(\Tr \S/p
 )\to \overline{F_{c,\sigma^2(1-\rho)}}(\sigma^2)=\overline{F_{c,1-\rho}}(1)$$
 which is $GK_{c,\rho}.$
\end{proof}

\begin{thm}\label{conv3b} 
Suppose that $x_{ij}$ $(1\le i \le p;$ $1\le j\le n)$ are \emph{i.\@i.\@d.\@} centered random variables with variance $\sigma^2$ $(0<\sigma^2<\infty)$. Suppose $n,p\to \infty$, and $p/n\to c>0$. Then, the following assertions hold:
\begin{enumerate}\rm
\item \label{maincor} Almost surely, $
GK^{\bR} \to GK_{c,0}.$
\item \label{main} If $c\le 1$, then almost surely, $GK^\S \to GK_{c,0}$.
\item \label{main2} If $\Exp(x_{ij}^{12})<\infty$, then, in probability, $GK^\S \to GK_{c,0}$. 
\end{enumerate}
\end{thm}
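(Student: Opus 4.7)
The plan is to follow the same three-step template used in Theorem~\ref{conv3}: (i)~pin down the almost-sure (resp.\ in-probability) weak limit of the ESD, (ii)~identify the almost-sure (resp.\ in-probability) limit of the Guttman--Kaiser threshold $\Tr(\cdot)/p$, and (iii)~invoke continuity of the limiting complementary distribution at that limit. Only the LSD input and the mode of convergence change between the three assertions.

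Assertion~(1) is essentially immediate once Proposition~\ref{prop:ji3} is in hand: every sample correlation matrix has $\Tr \bR/p = 1$ identically, so no random-threshold issue arises. Proposition~\ref{prop:ji3} yields $F^{\bR} \cas F_{c,1}$, and $1$ is a continuity point of $F_{c,1}$ (whose only possible atom sits at $0$). Thus $GK^{\bR} = \overline{F^{\bR}}(1) \cas \overline{F_{c,1}}(1) = GK_{c,0}$.

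For assertion~(2), I would substitute Proposition~\ref{prop:MP LT} (almost-sure weak convergence of $F^{\S}$ to $F_{c,\sigma^2}$) for Theorem~\ref{thm:equilsd} in the proof of Theorem~\ref{conv3}(2). The threshold is $\Tr \S/p = (np)^{-1}\sum_{i,j}x_{ij}^{2}$, and I need $\Tr \S/p \cas \sigma^2$ using only $\Exp x_{ij}^{2}=\sigma^2<\infty$. For $c\le 1$ (i.e.\ $p\le n$), a clean reduction via $\Tr \S = n^{-1}\sum_{j=1}^{n}\|X_{j}\|^{2}$, together with the fact that $\|X_{j}\|^{2}/p$ are i.i.d.\ with mean $\sigma^2$, yields a one-dimensional strong law that is compatible with this low-moment assumption; this is precisely where the restriction $c\le 1$ appears to be needed. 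Once $\Tr\S/p\cas\sigma^2$ is in hand, the same sandwiching argument as in Theorem~\ref{conv3}(2)---using that $\sigma^2>0$ is a continuity point of $F_{c,\sigma^2}$ and that weak convergence is locally uniform at continuity points---gives $GK^{\S}=\overline{F^{\S}}(\Tr\S/p)\cas\overline{F_{c,\sigma^2}}(\sigma^2)=\overline{F_{c,1}}(1)=GK_{c,0}$.

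For assertion~(3), the goal is to drop $c\le 1$ by strengthening the moment hypothesis. The skeleton is identical but with ``almost surely'' relaxed to ``in probability''; Proposition~\ref{prop:MP LT} still supplies the LSD convergence, so only the threshold step must be upgraded. I would control $\Prb(|\Tr\S/p-\sigma^2|>\varepsilon)$ via a Markov--Chebyshev bound on a sufficiently high even centered moment of $(np)^{-1}\sum_{i,j}(x_{ij}^{2}-\sigma^2)$; by independence, the expansion of that moment reduces to a combinatorial count in which the $12$th moment of $x_{ij}$ is the natural hypothesis that makes the bound vanish uniformly for $p/n\to c$ with $c$ possibly exceeding $1$. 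The concluding step is a standard sandwich: on $\{|\Tr\S/p-\sigma^2|<\delta\}$, $\overline{F^{\S}}(\Tr\S/p)$ lies between $\overline{F^{\S}}(\sigma^2\pm\delta)$, both of which, by continuity of $\overline{F_{c,\sigma^2}}$ at $\sigma^2$, converge to values arbitrarily close to $\overline{F_{c,\sigma^2}}(\sigma^2)$ as $\delta\to 0$. The main obstacle will be the moment calculation itself---identifying why $12$ is the right exponent and verifying that the combinatorics behave uniformly in the joint limit $n,p\to\infty$ with $c>1$, where the one-dimensional SLLN trick of assertion~(2) is no longer available.
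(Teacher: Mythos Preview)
Your treatment of (1) matches the paper. The genuine gaps lie in (2) and (3), where you have misidentified the roles of the hypotheses $c\le1$ and $\Exp(x_{ij}^{12})<\infty$.

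In (2), the strong law $\Tr\S/p=(np)^{-1}\sum_{i,j}x_{ij}^{2}\cas\sigma^{2}$ requires only the second-moment hypothesis and holds for every $c>0$: it is the ordinary SLLN applied to the i.i.d.\ array $\{x_{ij}^{2}\}$ along any enumeration, since $np\to\infty$. Your ``one-dimensional'' reduction through $n^{-1}\sum_{j}\|X_{j}\|^{2}/p$ is not a valid SLLN (the summands change with $p$), and in any case isolates no role for $c\le1$. The paper's reason for assuming $c\le1$ is different: for $c\le1$ the limit $F_{c,\sigma^{2}}$ has no atom at $0$ and is continuous on all of $\R$, so the weak convergence of Proposition~\ref{prop:MP LT} upgrades to pointwise convergence. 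This lets the step $Z_{p,0}\to0$ from the proof of Theorem~\ref{conv3}(2) go through without invoking $\Prb(\Tr\S/p=0)=0$, which in Theorem~\ref{conv3} relied on continuity of the normal entries and is unavailable for general i.i.d.\ $x_{ij}$.

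In (3), the twelfth moment is \emph{not} used to control $\Tr\S/p$---as just noted, that converges almost surely under the second-moment hypothesis alone, and the paper simply cites the law of large numbers for it. The twelfth moment enters through the G\"otze--Tikhomirov Kolmogorov-distance bounds (Propositions~\ref{prop:gotze} and~\ref{prop:gotze2}), which the paper uses to control the \emph{ESD} term:
\[
\Exp\left|GK^{\S}-\overline{F_{c,\sigma^{2}}}\Bigl(\tfrac{1}{p}\Tr\S\Bigr)\right|
\;\le\;\Exp\sup_{x\in\R}\bigl|F^{\S}(x)-F_{c,\sigma^{2}}(x)\bigr|
\;\le\; C\,M_{12}^{1/6}\,n^{-1/2}\to0,
\]
so that $Z_{p,0}\to0$ in mean and hence in probability. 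Your proposed Markov--Chebyshev computation on $(np)^{-1}\sum_{i,j}(x_{ij}^{2}-\sigma^{2})$ targets the wrong quantity; it neither explains why $12$ is the relevant exponent nor addresses the actual obstacle, which is obtaining an \emph{expectation} bound on the uniform distance between $F^{\S}$ and $F_{c,\sigma^{2}}$ rather than merely almost-sure weak convergence.
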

\begin{proof} (1) The proof is the proof of Theorem~\ref{conv3} except that $\rho$ should be 0, and Theorem \ref{thm:equilsd2} should be Proposition~\ref{prop:ji3}.

(2) The proof is the proof of Theorem~\ref{conv3} except that $\rho$ should be 0, and Theorem \ref{thm:equilsd} should be Proposition~\ref{prop:MP LT}.
The weak convergence to $F_{c,\sigma^2}$ becomes the pointwise convergence because $F_{c,\sigma^2}$ is continuous by $0<c\le 1$.

(3) By $GK^\S=\overline{F^\S}\left(\frac{1}{p}\Tr \S\right)$,
$\Exp   \left| GK^\S - \overline{F_{c,\sigma^2}} \left(\frac{1}{p}\Tr \S\right ) \right|$ is at most $\Exp\sup_{x\in\R}$   $\left|F^\S(x)-F_{c,\sigma^2}(x)\right|.
$
Hence, by the following Proposition~\ref{prop:gotze}, and Proposition~\ref{prop:gotze2},  in the limit of $n$ and $p$, $Z_{p,0}$ of \eqref{zprho}
converges to 0 in mean, and thus in probability.
By the law of large numbers, almost surely, 
$\Tr \S/{p}$ converges to $\sigma^2$, and thus in probability. Since $\sigma^2>0$, $\overline{F_{c,\sigma^2}}$ is continuous at $\sigma^2$. Hence, by \cite[p.~7, Theorem~2.3]{Van98},
$\overline{F_{c,\sigma^2}}(\Tr \S /p )$ converges in probability to $ \overline{F_{c,\sigma^2}}(\sigma^2)$.
As a result, in the limit $n,p\to\infty$ with $p/n\to c>0$, it holds in probability that $GK^\S  = Z_{p,0}+\overline{F_{c,1}}(\Tr \S /p
 )\to \overline{F_{c,\sigma^2}}(\sigma^2)=\overline{F_{c,1}}(1)$.
\end{proof}

\begin{prop}[\protect{\cite[{Theorem 1.2}]{GT}}]\label{prop:gotze}
Assume the following conditions.
\begin{enumerate}[(i)]\rm
\item \label{cond:independent}
$x_{ij}$ $(1\le i\le p,\ 1\le j\le n)$ are independent, centered random variables.
\item \label{cond:twelfth moment}
 $M_{12}:=\max_{1\leq i\leq p,\;1\leq j\leq n}\Exp \left |x_{ij}\right |^{12}<\infty.$
\item \label{cond:unit variance}
$\Var(x_{ij})=1$ $(1\le i\le p,\ 1\le j\le n)$ and $p/n=c$.
\end{enumerate}
Then, if
$0<\Theta_1\leq c\leq \Theta_2<\infty$ and $|c-1|\geq\theta>0$ for some constants $\Theta_1,\Theta_2,\theta$,
there is an absolute constant $C(\theta,\Theta_1,\Theta_2)$ such that
$$\Exp\left(\K (F^\S,F_{c,1}) \right) \leq C(\theta,\Theta_1,\Theta_2)M_{12}^{1/6}n^{-1/2}.$$
\end{prop}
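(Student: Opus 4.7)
The plan is to follow the classical Stieltjes-transform route of G\"otze and Tikhomirov, which is the standard way to get an $n^{-1/2}$ rate for the Kolmogorov distance between $F^\S$ and $F_{c,1}$. Let $m_n(z)=\int(x-z)^{-1}\,dF^\S(x)$ and $m(z)=\int(x-z)^{-1}\,dF_{c,1}(x)$ for $z=u+iv$ with $v>0$. The first step is to recall the quadratic identity $czm(z)^2 - (1-c-z)m(z) + 1 = 0$ satisfied by the Stieltjes transform of the Mar\v{c}enko--Pastur law, and to show that $m_n(z)$ satisfies a perturbed version of this identity $czm_n(z)^2 - (1-c-z)m_n(z) + 1 = \delta_n(z)$, where the random remainder $\delta_n(z)$ is built from quadratic forms $x^\top A x - \Tr A / n$ in the columns of $\X$ via the Sherman--Morrison/Schur-complement expansion of the resolvent $(\S - zI)^{-1}$.

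The second step is to obtain sharp moment bounds on $\delta_n(z)$. Using the martingale decomposition of $m_n(z) - \Exp m_n(z)$ along the filtration generated by the columns of $\X$, together with Burkholder's inequality and the Marcinkiewicz--Zygmund inequality, one controls the variance at order $n^{-2}v^{-4}$; here the hypothesis $M_{12}<\infty$ is used decisively because high moments of $x_{ij}$ feed directly into moments of quadratic forms via Hanson--Wright type bounds. The deterministic bias $\Exp m_n(z) - m(z)$ is estimated by solving the quadratic equation perturbatively and bounding $\Exp|\delta_n(z)|$, which again needs $M_{12}$ to deal with the off-diagonal quadratic-form fluctuations.

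The third step passes from the Stieltjes transform to the Kolmogorov distance. One invokes the standard smoothing inequality (e.g.\ \cite[Theorem B.14]{bai4}): for a suitable contour with $v_n \asymp n^{-1/2}$,
\begin{equation*}
\K(F^\S, F_{c,1}) \;\lesssim\; \int_{-A}^{A}\left|m_n(u+iv_n)-m(u+iv_n)\right|\,du \;+\; v_n\sup_x \int_{|y|\le 2v_n}\!\!\left|F_{c,1}(x+y)-F_{c,1}(x)\right|\frac{dy}{y^2}.
\end{equation*}
The second summand is $O(n^{-1/2})$ thanks to the H\"older regularity of the MP density away from the spectral edges (this is where the hypothesis $|c-1|\ge\theta>0$ enters: it rules out the square-root singularity at $0$ that occurs only for $c=1$, so that the MP density is bounded by a constant $C(\theta,\Theta_1,\Theta_2)$ on $[-A,A]$). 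Combining with the Stieltjes-transform bound from step two, and then taking expectation, yields $\Exp\K(F^\S,F_{c,1}) \le C(\theta,\Theta_1,\Theta_2) M_{12}^{1/6} n^{-1/2}$, where the exponent $1/6$ arises from optimising the interplay between $v_n$ and the $M_{12}$-dependent factors when Burkholder's inequality is applied to order $12$.

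The main obstacle is step two: obtaining explicit moment control of the random perturbation $\delta_n(z)$ with the correct dependence on $v=\mathrm{Im}\,z$ and on the moment $M_{12}$. The computation requires careful partitioning of the Schur-complement error into a sum of a martingale difference sequence and a bias, then applying concentration inequalities for quadratic forms uniformly in $z$ along a suitable contour. All subsequent assembly\,---\,the smoothing inequality and the final optimisation of $v_n$\,---\,is essentially routine once these moment estimates are in hand.
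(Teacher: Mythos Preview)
The paper does not prove this proposition at all: it is quoted verbatim as an external result from G\"otze--Tikhomirov \cite[Theorem~1.2]{GT}, with no accompanying proof or sketch. So there is nothing in the paper to compare your proposal against.

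Your sketch is a reasonable outline of the actual G\"otze--Tikhomirov argument (Stieltjes transform, perturbed quadratic identity, martingale/Burkholder bounds on the remainder, then the Bai-type smoothing inequality to pass to Kolmogorov distance). But since the paper merely cites the result and uses it as a black box in the proof of Theorem~\ref{conv3b}\,(3), your write-up goes well beyond what is required here. If the goal is to reproduce the paper's treatment of this proposition, the correct response is simply to cite \cite{GT} and move on.
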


\begin{prop}[\protect{\cite[{Theorem 1.2}]{GT2}}]\label{prop:gotze2}Assume the premises \eqref{cond:independent}, \eqref{cond:twelfth moment}, and \eqref{cond:unit variance} of Proposition~\ref{prop:gotze}.
Then,
if $1\geqq c > \theta > 0$ for some constant $\theta$,
 there is a positive constant $C(\theta)$ such that
$$\Exp\left(\K(F^\S,F_{c,1})\right)\leq C(\theta)M_{12}^{1/6}n^{-1/2}.$$
\end{prop}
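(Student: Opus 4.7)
The plan is to apply the Stieltjes transform method, which is the standard route to quantitative Berry-Esseen-type bounds for the Kolmogorov distance between the empirical spectral distribution of $\S$ and the Mar\v cenko-Pastur law. Write $m_n(z)=p^{-1}\Tr(\S-zI)^{-1}$ for $z\in\mathbb C^+$, and let $s(z)$ denote the Stieltjes transform of $F_{c,1}$, characterised as the unique solution in $\mathbb C^+$ of the quadratic equation
\begin{align*}
c z\,s(z)^2+(c+z-1)\,s(z)+1=0.
\end{align*}
Bai's smoothing inequality (see~\cite{bai4}) bounds $\K(F^\S,F_{c,1})$ by a line integral of $|m_n(z)-s(z)|$ on a horizontal segment $\Im z=v>0$ plus controlled boundary terms. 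Consequently the task splits into (a) a concentration estimate $m_n(z)\approx \Exp m_n(z)$ and (b) a bias estimate $\Exp m_n(z)\approx s(z)$, both quantitative in $M_{12}$ and $n^{-1/2}$, uniform on a contour at height $v\asymp M_{12}^{1/6}n^{-1/2}$.

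For (a) I would use the standard martingale decomposition
\begin{align*}
m_n(z)-\Exp m_n(z)=\sum_{k=1}^{p}(\Exp_k-\Exp_{k-1})\,m_n(z),
\end{align*}
where $\Exp_k$ denotes conditioning on the first $k$ rows of $\X$. Each increment is controlled by the rank-one perturbation identity $|(\Exp_k-\Exp_{k-1})\Tr(\S-zI)^{-1}|\le v^{-1}$ a.s., and Burkholder's inequality combined with the twelfth-moment condition (ii) of Proposition~\ref{prop:gotze} delivers the required variance bound. For (b) I would use the Schur complement identity: writing $\x_k$ for the $k$th row of $\X$ and $\S^{(k)}$ for the submatrix obtained by removing that row, the $k$th diagonal entry of the resolvent is a rational function of the quadratic form $n^{-1}\x_k A_k\x_k^\top$ for an explicit $A_k$ built from $(\S^{(k)}-zI)^{-1}$ and independent of $\x_k$. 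Under condition (ii), the Bai-Silverstein quadratic form estimates yield $n^{-1}\x_k A_k\x_k^\top = n^{-1}\Tr A_k + O(M_{12}^{1/6}n^{-1/2}v^{-1})$ in suitable moments. Averaging over $k$ produces a perturbed MP equation for $\Exp m_n(z)$, and solving this perturbed equation with the help of stability of the self-consistent equation away from the support gives the desired bias bound.

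The main obstacle, and the reason this assertion is separated from the companion Proposition~\ref{prop:gotze} in which $|c-1|\ge\theta$ is demanded, is the edge behaviour of $F_{c,1}$ as $c\uparrow 1$. When $c<1$ is bounded away from $1$, the density $f_{c,1}$ is bounded and Bai's smoothing inequality admits a clean optimisation at height $v\asymp n^{-1/2}$. As $c\uparrow 1$, however, the lower edge $(1-\sqrt c)^2$ collapses to $0$ and $f_{c,1}$ develops an inverse-square-root singularity there, so the naive smoothing inequality loses a polynomial factor in $v$. The resolution, following G\"otze-Tikhomirov, is to replace the generic smoothing inequality by a sharpened version that exploits the precise square-root regularity of $F_{c,1}$ at both edges to absorb the singular boundary contribution into a finite constant. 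Carrying all the resolvent and quadratic form bounds uniformly as $c\to 1^-$, while using $c>\theta$ only to keep the self-consistent equation stably invertible away from its singular locus, is the delicate technical core and is what forces the constant in the final estimate to depend on $\theta$.
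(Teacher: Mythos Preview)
The paper does not supply a proof of this proposition at all: it is quoted verbatim as Theorem~1.2 of G\"otze--Tikhomirov~\cite{GT2} and used as a black box in the proof of Theorem~\ref{conv3b}\,(3). There is therefore no ``paper's own proof'' against which to compare your proposal.

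That said, your outline is a faithful high-level description of the Stieltjes-transform strategy that G\"otze and Tikhomirov themselves employ: Bai's smoothing inequality to pass from the Kolmogorov distance to resolvent estimates, a martingale/rank-one argument for concentration of $m_n(z)$ about its mean, and a Schur-complement expansion plus quadratic-form bounds to obtain a perturbed self-consistent equation for $\Exp m_n(z)$. You also correctly identify the reason the case $c\uparrow 1$ is treated separately from Proposition~\ref{prop:gotze}: the vanishing lower edge $a_1(c)=(1-\sqrt{c})^2$ and the resulting singularity of the Mar\v cenko--Pastur density require a refined smoothing inequality tailored to the square-root edge behaviour. What you have written is a plan rather than a proof---the hard analysis (uniform resolvent bounds near the edge, the precise form of the sharpened smoothing lemma, and tracking the $M_{12}^{1/6}$ dependence through the quadratic-form moment estimates) is acknowledged but not carried out---but as a roadmap it is accurate and matches the original source. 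For the purposes of the present paper, however, a citation suffices.
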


Figure~\ref{Fig:qgk1} is the graphs of $GK_{c,\rho}$ over $c\in(0,\, 20)$, for $\rho={0,0.3,0.5,0.8}$. 
This Figure elucidates Quotation~\ref{quotation} and \cite{Criterion}.
 \begin{figure}[ht]\centering
\includegraphics[scale=.56]{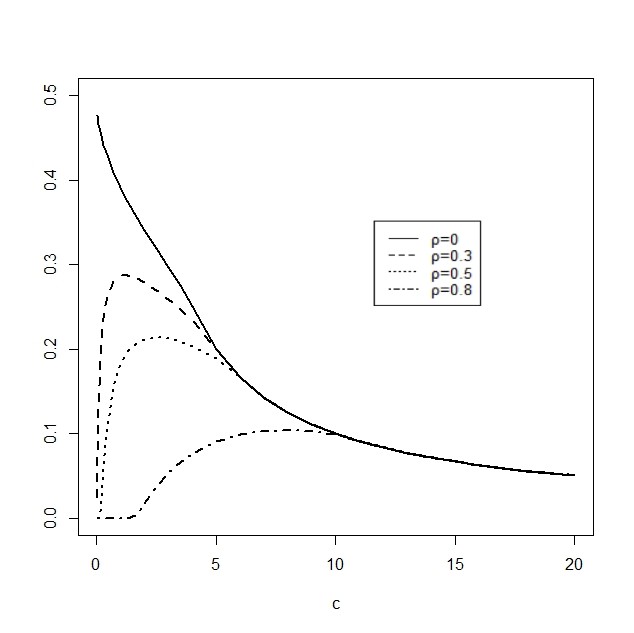}
\caption{$GK_{c,\rho}$}
\label{Fig:qgk1}
\end{figure} 

The second sentence of Quotation~\ref{quotation} is conformed to the following Theorem~\ref{thm:fig}~\eqref{assert:zero}.
In view of Theorem~\ref{conv3} and Theorem~\ref{conv3b}, one may think of $GK_{c,\rho}\le1/2$ for all $c>0$ and all $\rho\in [0,1)$.
This indeed follows from Theorem~\ref{thm:fig}~\eqref{assert:phase transition} and Theorem~\ref{thm:fig}~\eqref{assert:nonincreasing in rho}.

If  $n, p$ are large and $p/n$ is sufficiently large, then Guttman-Kaiser criterion may retain all the $\min(n,p)$ principal components or factors corresponding to positive eigenvalues, from the sample covariance matrices and the sample correlation matrices. 
Here, $p$ is the dimension of the population, and $n$ is the sample size.
See Figure~\ref{fig:qjbs} for $pGK^{\bR}/{\min(n,p)}$.

\begin{thm}\label{thm:fig}~
\begin{enumerate}\rm
 \item \label{assert:zero} $\lim_{c\downarrow 0} GK_{c,\rho} =0\,(0<\rho<1).$
\item \label{assert:phase transition} $\lim_{c\downarrow 0} GK_{c,0} =1/2\, (\rho=0)$ and $GK_{c,0}$ is strictly decreasing in $c>0$.
\item \label{assert:nonincreasing in rho}For any $c>0$, $GK_{c,\rho}$ is nonincreasing in $\rho\in[0,\, 1)$.
\item \label{assert:all nonzero} If $0\le \rho<1$, then $c\ge(1/\sqrt{1-\rho}+1)^2\iff GK_{c,\rho}=1/c$.  
      \end{enumerate}
\end{thm}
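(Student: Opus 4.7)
My plan is to prove each assertion by direct analysis of the explicit form of $F_{c,1}$: a continuous density $f_c(x) = \frac{1}{2\pi c x}\sqrt{(b_1(c)-x)(x-a_1(c))}$ supported on $[a_1(c),b_1(c)] = [(1-\sqrt c)^2,(1+\sqrt c)^2]$, of total continuous mass $\min(1,1/c)$, plus an atom of weight $(1-1/c)_+$ at $0$. All four statements concern the single function $GK_{c,\rho} = \overline{F_{c,1}}(1/(1-\rho))$.

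Assertion \eqref{assert:zero} will be immediate: $b_1(c) = (1+\sqrt c)^2 \downarrow 1$ as $c \downarrow 0$, while $1/(1-\rho) > 1$ for $\rho > 0$, so once $c$ is small enough that $b_1(c) < 1/(1-\rho)$ the continuous support lies entirely below the threshold, giving $GK_{c,\rho} = 0$. Assertion \eqref{assert:nonincreasing in rho} follows by composition: $\rho \mapsto 1/(1-\rho)$ is strictly increasing on $[0,1)$ and $\overline{F_{c,1}}$ is nonincreasing. For \eqref{assert:all nonzero}, a case analysis of $\overline{F_{c,1}}(t)$ on $(0,\infty)$ shows that $\overline{F_{c,1}}(t) = 1/c$ holds iff $c > 1$ and $t \le a_1(c) = (\sqrt c - 1)^2$: for $c \le 1$ the complementary distribution cannot reach $1/c \ge 1$ on $(0,\infty)$, whereas for $c > 1$ the continuous mass above $a_1(c)$ drops strictly below $1/c$ on $(a_1(c),b_1(c))$. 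Substituting $t = 1/(1-\rho) > 0$ and taking positive square roots converts the condition to $\sqrt c \ge 1 + 1/\sqrt{1-\rho}$, i.e., $c \ge (1+1/\sqrt{1-\rho})^2$; the right-hand side is $\ge 4$, so $c > 1$ is automatic.

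For \eqref{assert:phase transition} I separate the limit from the strict monotonicity. For the limit, I use the rescaling $y = (x-1)/\sqrt c$, which turns $f_c$ into $\tilde f_c(y) = \frac{1}{2\pi(1+\sqrt c\,y)}\sqrt{4-y^2+2\sqrt c\,y-c}$ on $[-2+\sqrt c,\,2+\sqrt c]$; as $c \downarrow 0$, $\tilde f_c$ converges (with its support) to the Wigner semicircle density $\frac{1}{2\pi}\sqrt{4-y^2}\,\I_{[-2,2]}(y)$, which is symmetric about $y = 0$, so $GK_{c,0} = \int_0^{2+\sqrt c} \tilde f_c(y)\,dy \to \int_0^2 \frac{\sqrt{4-y^2}}{2\pi}\,dy = 1/2$. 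For the strict monotonicity, assertion \eqref{assert:all nonzero} at $\rho = 0$ already gives $GK_{c,0} = 1/c$ on $[4,\infty)$. On $(0,4)$ the threshold $1$ is interior to $[a_1(c),b_1(c)]$, and the trigonometric substitution $x = 1+c+2\sqrt c\cos\theta$ (under which $(b_1-x)(x-a_1) = 4c\sin^2\theta$) rewrites
\begin{equation*}
GK_{c,0} = \int_0^{\theta_\star(c)} \frac{2\sin^2\theta}{\pi\bigl((1+c)+2\sqrt c\cos\theta\bigr)}\,d\theta,\qquad \theta_\star(c) = \arccos(-\sqrt c/2).
\end{equation*}
Polynomial division of $1-\cos^2\theta$ by $(1+c)+2\sqrt c\cos\theta$ in the variable $\cos\theta$ splits the integrand into an affine function of $\cos\theta$ plus a constant multiple of $\bigl((1+c)+2\sqrt c\cos\theta\bigr)^{-1}$, both of which integrate in closed form (the latter via the Weierstrass substitution). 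Differentiating the resulting expression in $c$ then yields $\tfrac{d}{dc} GK_{c,0} < 0$ on $(0,4)$, and continuity at $c = 4$ (both formulas give $1/4$) extends strict decrease to all $c > 0$.

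The main technical obstacle I anticipate is verifying negativity of this derivative: although the approach is mechanical, the closed form mixes $\theta_\star(c)$, $\arctan\!\bigl(\tfrac{|1-\sqrt c|}{1+\sqrt c}\tan(\theta_\star(c)/2)\bigr)$, and algebraic functions of $\sqrt c$. I would organize the computation by first using $\sin\theta_\star(c) = \sqrt{4-c}/2$ and $\tan(\theta_\star(c)/2) = \sqrt{4-c}/(2-\sqrt c)$ to reduce the arctangent argument to a single algebraic expression, and then collecting terms so that cancellation between the polynomial and inverse-trigonometric contributions makes the sign of $\tfrac{d}{dc}GK_{c,0}$ transparent.
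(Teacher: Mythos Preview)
Your proposal is correct and follows essentially the same route as the paper: parts \eqref{assert:zero}, \eqref{assert:nonincreasing in rho}, \eqref{assert:all nonzero} are handled identically by direct inspection of $\overline{F_{c,1}}$ and its support, and for part \eqref{assert:phase transition} both you and the paper use the Mar\v{c}enko--Pastur to semicircle rescaling for the limit and an explicit closed form for $F_{c,1}(1)$ followed by differentiation in $c$ for the strict monotonicity. The paper's derivative check is in fact cleaner than you anticipate: one finds that $-2\pi c^2\,\tfrac{d}{dc}\overline{F_{c,1}}(1)$ equals a function $f(c)$ (plus $\pi$ for $1<c<4$) with $f(0)=0$ and $f'(c)=\sqrt{4-c}/\sqrt{c}>0$ on $(0,4)$, so positivity is immediate.
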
 

\begin{proof} \eqref{assert:zero} $GK_{c,\rho}=\overline{F_{c,1}}(1/(1-\rho))=0$ if and only if $b_{1}(c)<1/(1-\rho)$.
By $\lim_{c\downarrow0} b_{1}(c)=1<1/(1-\rho)$, $\lim_{c\downarrow0} GK_{c,\rho}=\lim_{c\downarrow0} \overline{F_{c,1}}(1/(1-\rho))=0$.

\medskip
\noindent\eqref{assert:nonincreasing in rho} Because
$GK_{c,\rho}=\overline{F_{c,1}}(1/(1-\rho))$, the complementary distribution function $\overline{F_{c,1}}(x)$ is nonincreasing in $x$, and $1/(1-\rho)$ is strictly increasing in $\rho\in [0,1)$.

\medskip\noindent
\eqref{assert:all nonzero}  Assume
$GK_{c,\rho}=\overline{F_{c,1}}((1-\rho)^{-1})=1/c$. Because the mean of Mar\v{c}enko-Pastur distribution of index $c$ and scale parameter $a>0$ is $a$, $1>\overline{F_{c,1}}((1-\rho)^{-1})=GK_{c,\rho}$. Therefore $c>1$. Hence, Mar\v{c}enko-Pastur distribution of index $c$ has the probability mass $1-1/c$ at 0. Therefore, $(1-\rho)^{-1}\le a(c)$. The converse is easy. Now, we  solve  $(1-\rho)^{-1}\le a(c)$ and $c>1.$ By taking the square roots of both hand sides of $(1-\rho)^{-1}\le a(c)$, $(1-\rho)^{-1/2}\le 1-\sqrt{c}$ or $-(1-\rho)^{-1/2}\ge 1-\sqrt{c}$. The former is impossible, because $c>1$. Therefore, we have a solution $(1+(1-\rho)^{-1/2})^2\le c$.

\medskip\noindent
\eqref{assert:phase transition}
If a random variable $X_c$ follows the Mar\v{c}enko-Pastur distribution $F_{c,1}$, then as $c\to 0$, a random variable $X_c'=(2\sqrt{c})^{-1}(X_c-1)$ converges in distribution to a random variable $X'$ 
that follows \emph{Wigner's semi-circle law} with density function $2\pi^{-1}\sqrt{1-x^2}$ for $|x|\le1$ and 0 otherwise. By this, $
1-\lim_{c\downarrow 0}GK_{c,0}=\lim_{c\downarrow 0}F_{c,1}(1)= \lim_{c\downarrow 0}\Prb(X_c\le 1)=\lim_{c\downarrow 0}\Prb(X_c'\le 0)=\Prb(X'\le 0)={1}/{2}.$
Thus, $\lim_{c\downarrow0}GK_{c,0}=1/2$.
 
Next, we  prove that $GK_{c,0}$ is strictly decreasing in $c>0$.
By \eqref{assert:all nonzero} of this theorem, for $c\ge4$, $GK_{c,0}=1/c$ is strictly decreasing in $c$.
One can easily check that the cumulative distribution function of Mar\v{c}enko-Pastur distribution $F_{c,1}$ is equal to 
\begin{align*}
    \begin{cases}
    \frac{c-1}{c}\I_{x\in[0,a_1(c))}+\left(\frac{c-1}{2c}+F(x)\right)\I_{x\in[a_1(c),b_1(c)]}+\I_{x\in[b_1(c),\infty)}, &(c>1)\\
    F(x)\I_{x\in[a_1(c),b_1(c)]}+\I_{x\in[b_1(c),\infty)},&(0<c\le 1),
    \end{cases}
\end{align*}
where
\begin{multline*}
F(x)=\frac{1}{2\pi c}\left({\pi c+\sqrt{(b_1(c)-x)(x-a_1(c))}-(1+c)\arctan\frac{r(x)^2-1}{2r(x)}}\right)\\+\frac{1}{2\pi c}\left({(1-c)\arctan\frac{a_1(c)r(x)^2-b_1(c)}{2(1-c)r(x)}}\right) 
\end{multline*}
and
$r(x)=\sqrt{(b_1(c)-x)/(x-a_1(c))}$.
By calculation, $-2\pi c^2 d\overline{F_{c,1}}(1)/dc$ is  $f(c)$ for $0<c\le1$ and $\pi+f(c)$ for $1<c<4$,
by letting $f(c)$ be $\arctan \left( \sqrt {c}/\sqrt {4-c}\right)+\arctan \left( {{\sqrt {c} \left( c-3 \right) }/({\sqrt {4-c} \left( -1+c\right) }}\right) -\sqrt {c}\sqrt {4-c}$. 
Because $f(0)=0$ and $f'(c)=\sqrt{4-c}/\sqrt{c}>0$ for $0< c<4$, we have $f(c)>0$ for $0<c<4 $.
Hence, $dGK_{c,0}/dc<0$ for $0<c<4$. 
Consequently, $\overline{F_{c,1}}(1)$ is strictly decreasing in $0<c<4$.
\end{proof}

Figure~\ref{fig:qjbs} consists of the graphs of $pGK^{\bR}/\min(n,p)$ under assumption Theorem~\ref{conv3} with $n=1000$ and $p=50,70,\ldots,19980,20000$; and $\rho=0\,\text{(solid)},\,0.3$ $\;\text{(dashed)},\,0.5\,\text{(dotted)},\,0.8\,\text{(dash-dot)}$. We can find $pGK^{\bR}/\min(n,p)$ is equal to 1 if $c>({1/\sqrt{1-\rho}}+1)^2$ for large $n$ and $p$.

\begin{figure}[ht]\centering
\includegraphics[scale=.55]{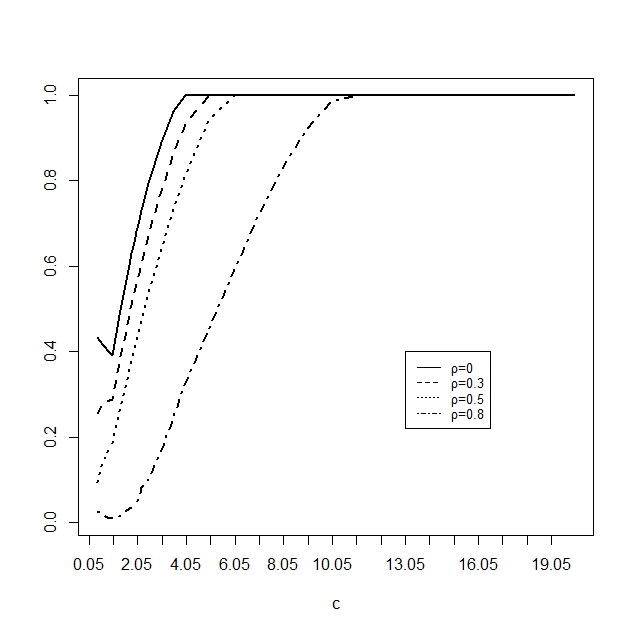}
\caption{$pGK^{\bR}/{\min(n,p)}$}
\label{fig:qjbs} 
\end{figure}

To expound the first sentence of Quotation~\ref{quotation},
 we compare Guttman-Kaiser criterion to another stopping rule of the number of principal components and factors, in the following subsections.
\subsection{CPV rule (Cumulative-percentage-of-total-variation rule)}\label{appendixB}
In PCA and EFA, each eigenvalue of the sample correlation matrices and the sample covariance matrices represents the level of variation explained by the associated principal components and factors. A simple and popular stopping rule has been related to the proportion of the sum of all eigenvalues of the sample correlation matrices or the sample covariance matrices explained by $q$ number of principal components and factors retained, say $t\in(0,\,1)$.

Let $\M$ be a real symmetric matrix semi-definite matrix of order $p$.
Let all the $p$ eigenvalues of $\M$ be 
\begin{align}\label{eigenvalues1}
\lambda_1\ge\lambda_2\ge\cdots\ge\lambda_p\ge 0.
\end{align}
For $t\in(0,\,1)$, the number $q$ of eigenvalues \emph{CPV rule} retains is, by definition,
the maximum nonnegative integer $q<p$ such that  $\sum_{i=1}^q\lambda_i\big/\sum_{i=1}^p\lambda_i\le t$ and $\lambda_q>\lambda_{q+1}$. 
For convenience, we set $\lambda_0=\lambda_1+1.$
Thus, there is indeed the maximum $q$, because $\sum_{i=1}^0\lambda_i/\sum_{i=1}^p\lambda_i\le t$ and $\lambda_0>\lambda_1$.
Since we required $\lambda_q>\lambda_{q+1}$, 
once CPV rule takes an eigenvalue $\lambda$, CPV rule takes all eigenvalues $\lambda_i$ such that $\lambda_i=\lambda$.

\begin{definition}\label{def:CPM}
For a threshold $t\,(0<t<1)$, define the following 
$$CP^{\M}(t)=\max\Set{\frac{q}{p} | \sum_{i=1}^q \lambda_i\big/\sum_{i=1}^p \lambda_i \le t,\  \lambda_q>\lambda_{q+1}\ \mbox{and}\ 0\le q<p}$$
where we set $\lambda_{0}=\lambda_1+1.$
\end{definition}
$CP^{\M}(t)\ne 1$, because $t<1$ and $\M$ has at least one positive eigenvalue.
Since the eigenvalues of matrix $\M$ follows~\eqref{eigenvalues1}, the eigenvalues of matrix $k\M$ are~\eqref{eigenvalues1} multiplied by $k$ for $k>0$. Thus, $CP^{k\M}=CP^{\M}$ for any $k>0$. 

We  study the limiting proportion of principal components and factors retained by CPV rule with threshold $t\in(0,\,1)$. 
\begin{definition}
\label{def:Gc}
For any $c,\sigma^2>0$, define a function
\begin{align*}
\G{c}{\sigma^2}(x)&=\frac{\int_{(-\infty,x]}\lambda d\MPmeas{c}{\sigma^2}(\lambda)}{\int_{\R}\lambda d\MPmeas{c}{\sigma^2}(\lambda)}\qquad(x\in\R).
\end{align*}
\end{definition}
A \emph{defective distribution function} is, by definition, a right-continuous, nondecreasing function on $\R$ that vanishes at $-\infty$. 
By the convention that $\inf\varnothing=\infty$ and that a set without a lower bound has an infimum $-\infty$, we define the \emph{generalized inverse} \cite{EH} of a possibly defective distribution function $F$ as
\begin{align*}
F^{-}(t)=\inf\Set{x\in\R | F(x)\ge t},\qquad (t\in\R).
\end{align*}
A \emph{quantile function} is the generalized inverse of a distribution function~\cite[p.~304]{Van98}.

Let us consider the complementary distribution function $\overline{F_{c,1}}=1-F_{c,1}$ applied to the quantile function $(G_{c,1})^-$ of  $(1-t)/(1-\rho).$ Hereafter, for a monotone function $f:\R\to\R$, we set $f(-\infty)=\lim_{x\downarrow-\infty}f(x)$ and $f(\infty)=\lim_{x\uparrow \infty}f(x)$.

\begin{definition}\label{def:cp}
For $c>0$, $0\le \rho<1$, and $0<t<1$, define a function
$$CP_{c,\rho}(t)=\overline{F_{c,1}}\left(\qtl{\G{c}{1}}\left(\frac{1-t}{1-\rho}\right)\right).$$
\end{definition}
The main theorems of this subsection are the following:
\begin{thm}\label{thm:EqCor CPMconvCPc} 
Suppose $X_1,\ldots,X_n\stackrel{\mbox{\rm i.\@i.\@d.\@}}{\sim} \mathrm{N}_p(\bm{\mu},\,\mathbf{D}\Rc(\rho)\mathbf{D})$ for a deterministic vector $\bm{\mu}\in\R^p$, a deterministic nonsingular diagonal matrix $\mathbf{D}\in\R^{p\times p}$, and $0\le \rho< 1$. Suppose $n,p\to \infty$ with $p/n\to c>0$. Then almost surely, for any $t\in[\rho,\, 1)$,
\begin{enumerate}\rm
    \item $CP^\bR(t)\to CP_{c,\rho}(t),$
    \item $\,CP^\S (t) \to CP_{c,\rho}(t)$, for $\bm{\mu}=\bm{0}$ and $\mathbf{D} = \sigma\mathbf{I}$ with $\sigma > 0$.
\end{enumerate}
\end{thm}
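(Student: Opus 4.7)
The plan is to rewrite $CP^{\M}(t)$ in terms of the ESD and a spectral integral, then pass to the limit using Theorems~\ref{thm:equilsd} and~\ref{thm:equilsd2}, handling the largest (\emph{spike}) eigenvalue separately because the LSDs capture only the bulk of the spectrum.

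For a positive semi-definite $\M$ of order $p$ with eigenvalues $\lambda_1\ge\cdots\ge\lambda_p\ge 0$, write $y_p := \lambda_{q+1}$ where $q$ is the retained count. The condition $\sum_{i=1}^q \lambda_i \le t\,\Tr\M$ becomes $\int_{(y_p,\infty)}\lambda\,dF^{\M}(\lambda)\le t\,p^{-1}\Tr\M$, and $CP^{\M}(t) = q/p = \overline{F^{\M}}(y_p)$. For $\bR$ from the ENP, Theorem~\ref{thm:equilsd2} gives the bulk LSD $F_{c,1-\rho}$ whose mean is $1-\rho$, while $p^{-1}\Tr\bR=1$ exactly, so a single spike eigenvalue of size $\approx p\rho$ must absorb the missing mass. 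Proposition~\ref{lem:decomposition} makes this precise: $\X = \P + \U$ with $\P = \sigma\sqrt{\rho}\,\mathbf{1}_p \bm\eta^\top$ of rank one, so $\P\P^\top/n = \sigma^2\rho\,(\bm\eta^\top\bm\eta/n)\,\mathbf{1}_p\mathbf{1}_p^\top$ has unique nonzero eigenvalue $p\sigma^2\rho\,(\bm\eta^\top\bm\eta/n) \cas p\sigma^2\rho$. Combining Weyl's inequality on the low-rank perturbation $\S = \U\U^\top/n + E$ with $\lambda_1(\U\U^\top/n) = O(1)$ a.s.\ (from the compact MP bulk support for $\U\U^\top/n$) and the Rayleigh lower bound $\lambda_1(\S) \ge p^{-1}\mathbf{1}_p^\top\S\mathbf{1}_p$ yields $\lambda_1(\S)/p\cas \sigma^2\rho$; then using $\bR = \mathbf{D}_{\tilde\S}^{-1/2}\tilde\S\mathbf{D}_{\tilde\S}^{-1/2}$ with diagonal entries of $\tilde\S$ tending to $\sigma^2$ (Lemma~\ref{lem:lim2}) gives $\lambda_1(\bR)/p\cas \rho$.

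With the spike isolated, the defining equation $(\lambda_1 + \sum_{i=2}^{q}\lambda_i)/p = t + o(1)$ becomes, in the limit, $\rho + \int_{[y^*,\infty)}\lambda\,dF_{c,1-\rho}(\lambda) = t$, where $y^*:=\lim y_p$. Rearranging and dividing by the bulk mean $1-\rho$ gives $G^{\mathrm{bulk}}(y^{*}) = (1-t)/(1-\rho)$ with $G^{\mathrm{bulk}}(x) = \int_{(-\infty,x]}\lambda\,dF_{c,1-\rho}(\lambda)/(1-\rho)$. The scaling $F_{c,1-\rho}(x) = F_{c,1}(x/(1-\rho))$ identifies $G^{\mathrm{bulk}}(x) = \G{c}{1}(x/(1-\rho))$, hence $y^*/(1-\rho) = \qtl{\G{c}{1}}((1-t)/(1-\rho))$, and
\[
q/p = \overline{F^{\bR}}(y_p) \cas \overline{F_{c,1-\rho}}(y^*) = \overline{F_{c,1}}\bigl(\qtl{\G{c}{1}}((1-t)/(1-\rho))\bigr) = CP_{c,\rho}(t).
\]
The hypothesis $t\in[\rho,1)$ ensures $(1-t)/(1-\rho)\in(0,1]$, so $y^*$ is a continuity point of $F_{c,1-\rho}$. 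Claim (2) is parallel via Theorem~\ref{thm:equilsd}, using the LSD $F_{c,\sigma^2(1-\rho)}$; the $\sigma^2$ factors cancel in the final formula.

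The main obstacle is lifting weak convergence $F^{\bR}\Rightarrow F_{c,1-\rho}$ to convergence of the first-moment spectral integral $\int_I \lambda\,dF^{\bR}(\lambda)$ over cutoff intervals $I = (y_p,\infty)$. Since $\lambda$ is unbounded, this is not automatic; it requires a uniform upper tail bound on the bulk, namely that after removing the single spike all remaining eigenvalues are almost surely confined to a deterministic compact set. This is exactly the spike separation established above; once in hand, a portmanteau argument at continuity points of the LSD delivers the required integral convergence, and the proof concludes.
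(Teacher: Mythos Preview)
Your approach is correct in spirit but takes a genuinely different route from the paper's. You explicitly isolate the spike eigenvalue, proving $\lambda_1(\bR)/p\cas\rho$ (which the paper states as Theorem~\ref{thm:main} but attributes to a separate work), and then argue that after peeling off the spike the remaining eigenvalues sit in a compact set, so that weak convergence of the bulk ESD upgrades to convergence of truncated first-moment integrals via portmanteau. The paper instead avoids any spike analysis: it observes, via the layer-cake identity $\int_{[0,x]}\lambda\,d\mu^{\M}(\lambda)=\int_0^{x}(F^{\M}(x)-F^{\M}(h))\,dh$, that $G^{\M}(x)$ can be written as a Riemann integral of the uniformly bounded functions $F^{\M}$ over the bounded interval $[0,x]$, and then applies Arzel\`a's dominated convergence theorem directly. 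This yields pointwise convergence of $G^{\M}$ to the \emph{defective} distribution $(1-\rho)G_{c,1-\rho}$; the missing mass $\rho$ encodes the spike implicitly without ever locating $\lambda_1$. What your approach buys is a transparent probabilistic picture (spike plus bulk); what the paper's approach buys is economy, since it uses only the LSD results of Theorems~\ref{thm:equilsd} and~\ref{thm:equilsd2} and nothing about individual eigenvalues.

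One point in your outline deserves more care: you write ``$y^*:=\lim y_p$'' and then pass to the limit in the defining inequality, but the existence of this limit is exactly what must be proved. The paper handles this cleanly by first establishing the representation $CP^{\M}(t)=\overline{F^{\M}}\bigl((G^{\M})^{-}(1-t)\bigr)$ (Lemma~\ref{lem:CPM}) and then invoking a quantile-convergence lemma (Lemma~\ref{lem:Parzen}): since $G^{\M}\to(1-\rho)G_{c,1-\rho}$ pointwise and the limiting quantile function is continuous on $(0,1-\rho]$, one gets $(G^{\M})^{-}(1-t)\to\bigl((1-\rho)G_{c,1-\rho}\bigr)^{-}(1-t)$ directly. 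Your argument can be made rigorous the same way, but as written the step ``let $y^*=\lim y_p$'' is a gap rather than a conclusion.
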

\begin{thm}\label{thm:null CPMconvCPc} 
Suppose that $x_{ij}$ $(1\le i \le p;$ $1\le j\le n)$ are \emph{i.\@i.\@d.\@} random variables with variance $\sigma^2$ $(0<\sigma^2<\infty)$. Suppose  $n,p\to \infty$ with $p/n\to c>0$. Then almost surely, for any $t\in (0,1)$,
\begin{enumerate}\rm
    \item $CP^\bR(t)\to CP_{c,0}(t),$
    \item $\,CP^\S (t) \to CP_{c,0}(t)$, for $\Exp{x_{ij}}={0}$.
\end{enumerate}
\end{thm}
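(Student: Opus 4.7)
The plan is to imitate the template used for Theorem~\ref{conv3b} with the Guttman--Kaiser functional replaced by the CPV functional of Definition~\ref{def:CPM}; equivalently, to run the proof of Theorem~\ref{thm:EqCor CPMconvCPc} at $\rho=0$, substituting Proposition~\ref{prop:ji3} for Theorem~\ref{thm:equilsd2} in assertion~(1) and Proposition~\ref{prop:MP LT} for Theorem~\ref{thm:equilsd} in assertion~(2). The key idea is to rewrite $CP^{\M}(t)$ as $\overline{F^{\M}}(\tau_p)$, where $\tau_p$ is the largest discarded eigenvalue, and to identify $\tau_p$ as (essentially) the generalised inverse at level $1-t$ of the mean-weighted distribution function
\[
G_{p}^{\M}(x)\;:=\;\frac{\int_{(-\infty,x]}\lambda\,dF^{\M}(\lambda)}{\Tr\M/p}.
\]

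Writing $\sigma^{2}$ for the effective scale ($\sigma^{2}=1$ when $\M=\bR$ in assertion~(1), and $\sigma^{2}=\Var(x_{11})$ when $\M=\S$ in assertion~(2)), the first step is the pointwise almost-sure convergence $G_p^{\M}(x)\cas G_{c,1}(x/\sigma^{2})$ at every continuity point of $G_{c,1}(\cdot/\sigma^{2})$. For~(1), Proposition~\ref{prop:ji3} supplies $F^{\bR}\Rightarrow F_{c,1}$ almost surely and $\Tr\bR/p=1$; for~(2), Proposition~\ref{prop:MP LT} supplies $F^{\S}\Rightarrow F_{c,\sigma^{2}}$ almost surely, while the strong law of large numbers yields $\Tr\S/p=(np)^{-1}\sum_{i,j}x_{ij}^{2}\cas\sigma^{2}$. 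To upgrade weak convergence of $F^{\M}$ to convergence of the truncated first moment $\int_{(-\infty,x]}\lambda\,dF^{\M}(\lambda)$, I would fix any $A>x$ and observe that $\lambda\mapsto(\lambda\wedge A)\I_{\lambda\le x}$ is bounded with discontinuity set contained in $\{x\}$, which is $F_{c,\sigma^{2}}$-null. The Portmanteau theorem then gives the required convergence, and dividing by $\Tr\M/p\cas\sigma^{2}$ finishes this step.

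Because the Mar\v{c}enko--Pastur density is strictly positive on the open bulk $(a_{1}(c),b_{1}(c))$, the map $G_{c,1}$ is continuous and strictly increasing from $0$ to $1$ there, so $G_{c,1}^{-}$ is continuous on $(0,1)$ with values inside the open bulk. For any small $\varepsilon>0$, pick continuity points $x^{\pm}$ of $F_{c,\sigma^{2}}$ with $G_{c,1}(x^{\pm}/\sigma^{2})=(1-t)\pm\varepsilon$; by the previous step, eventually $G_p^{\M}(x^{-})<1-t<G_p^{\M}(x^{+})$ almost surely, which sandwiches $\tau_p$ between $x^{-}$ and $x^{+}$ and yields $\tau_p\cas\sigma^{2}G_{c,1}^{-}(1-t)$. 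Since this limit lies in the open bulk it is a continuity point of $F_{c,\sigma^{2}}$, so $F^{\M}\Rightarrow F_{c,\sigma^{2}}$ a.s.\ combined with $\tau_p\cas\sigma^{2}G_{c,1}^{-}(1-t)$ gives
\[
CP^{\M}(t)=\overline{F^{\M}}(\tau_p)\cas\overline{F_{c,\sigma^{2}}}\bigl(\sigma^{2}G_{c,1}^{-}(1-t)\bigr)=\overline{F_{c,1}}\bigl(G_{c,1}^{-}(1-t)\bigr)=CP_{c,0}(t),
\]
where the last equality uses the scaling identity $F_{c,\sigma^{2}}(\sigma^{2}y)=F_{c,1}(y)$.

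The main obstacle I expect is the bookkeeping that relates the discrete definition of $CP^{\M}(t)$---``maximum $q<p$ with $\sum_{i\le q}\lambda_i/\sum_i\lambda_i\le t$ and the tie-breaker $\lambda_q>\lambda_{q+1}$''---to the clean identification $\tau_p=(G_p^{\M})^{-}(1-t)$ used above. Since we assume only i.i.d.\ entries with finite variance (not necessarily a continuous law), the eigenvalue spectrum is not guaranteed to be simple; however, $F_{c,1}$ is continuous on the open bulk, so any cluster of eigenvalues near $\tau_p$ has $F^{\M}$-mass tending to $0$, and the tie-breaking clause only shifts $\tau_p$ by $o(1)$---which is harmlessly absorbed by the $\varepsilon$-sandwich above.
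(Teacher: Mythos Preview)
Your plan is exactly the paper's: the proof of Theorem~\ref{thm:null CPMconvCPc} in the appendix is literally ``run the proof of Theorem~\ref{thm:EqCor CPMconvCPc} with $\rho=0$, replacing Theorem~\ref{thm:equilsd2} by Proposition~\ref{prop:ji3} for $\bR$ and Theorem~\ref{thm:equilsd} by Proposition~\ref{prop:MP LT} for $\S$,'' and your five-step skeleton (rewrite $CP^{\M}$ via a quantile of $G^{\M}$, prove $G^{\M}\to G_{c,\sigma^2}$, invert, feed into $\overline{F^{\M}}$) mirrors the proof sketch preceding the appendix.

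Two small points where you diverge harmlessly from the paper's implementation. First, for the convergence of $\int_{(-\infty,x]}\lambda\,dF^{\M}(\lambda)$ the paper writes this as a Riemann integral of $F^{\M}-F_{c,\sigma^2}$ over $[0,x]$ and applies Arzel\`a's dominated convergence theorem (Lemma~\ref{lem:GMconvGc}); your Portmanteau argument with the bounded function $\lambda\mapsto\lambda\I_{\lambda\le x}$ (the truncation $\wedge A$ is redundant since the eigenvalues are nonnegative) is an equally valid and arguably cleaner route. Second, you needn't worry about the tie-breaking clause: Lemma~\ref{lem:CPM} in the appendix shows \emph{exactly} that $CP^{\M}(t)=\overline{F^{\M}}\bigl((G^{\M})^{-}(1-t)\bigr)$, with $\lambda_{q+1}=(G^{\M})^{-}(1-t)$ on the nose, so the $o(1)$ hedge in your last paragraph is unnecessary.
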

\emph{\textbf{Proof sketch of Theorem~\ref{thm:EqCor CPMconvCPc}.}} 
Let $\M$ be $\bR$ or $\S$.
\begin{enumerate}
    \item 
We define a random distribution function
\begin{align}\label{def:GT}
G^\M(x)={\sum_{\lambda_i\le x} \lambda_i}\big/{\sum_{i=1}^p\lambda_i},\quad (x\in\R).
\end{align}
\item
We prove that  $\qtl{G^\M}(1-t)$ is the threshold of eigenvalues of $\M$ that CPV rule retains. In other words, $CP^{\M}(t)=\overline{F^\M}\left(\qtl{G^\M}(1-t)\right).$
\item \label{proc:2}
By Theorem~\ref{thm:equilsd2} (Theorem~\ref{thm:equilsd}, resp.), we prove that almost surely, $G^\bR$ ($G^\S$, resp.) converges pointwise to a defective distribution function $(1-\rho)G_{c,1-\rho}$ ($(1-\rho)G_{c,\sigma^2(1-\rho)}$, resp.). 
\item
By this, we derive that almost surely,  for any $u\in(0,\,1-\rho]$,  $ \qtl{G^\bR}(u)$ tends  to $\qtl{(1-\rho)\G{c}{1-\rho}}(u)$ and $ \qtl{G^\S }(u)$ does to $\qtl{(1-\rho)\G{c}{\sigma^2(1-\rho)}}(u)$.

\item
Then, we deduce that almost surely, for any $t\in[\rho,\,1)$, $$CP^\M (t)=\overline{F^\M}(\qtl{G^\M}(1-t))\to CP_{c,\rho}(t)=\overline{F_{c,1-\rho}}\left(\qtl{\G{c}{1-\rho}}\left(\frac{1-t}{1-\rho}\right)\right).$$
\end{enumerate}
For the proofs of Theorem~\ref{thm:EqCor CPMconvCPc} and Theorem~\ref{thm:null CPMconvCPc}, see Appendix.

\medskip
We contrast Theorem~\ref{thm:fig} for Guttman-Kaiser criterion, against
the following theorem for CPV rule:

\begin{thm}~
\begin{enumerate}
    
\item \label{assert:cont decreasing} $\lim_{c\downarrow 0}CP_{c,\rho}(t)= 1-\frac{1-t}{1-\rho}$ for $0\le \rho<t<1$. 

\item \label{assert:cp nonincreasing in rho} $CP_{c,\rho}(t)$ is nonincreasing in $\rho\in[0,\, 1)$ and nondecreasing in $t\in(0,\,1)$.

\item\label{assert:cpv 0} $CP_{c,\rho}(t)= 0$ if $0<t<\rho<1$.
\end{enumerate}
\end{thm}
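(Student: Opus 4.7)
I would unpack Definition~\ref{def:cp} by setting $q_c(t,\rho):=\qtl{\G{c}{1}}\bigl((1-t)/(1-\rho)\bigr)$, so that $CP_{c,\rho}(t)=\overline{F_{c,1}}(q_c(t,\rho))$. Each of the three assertions is then attacked through this composition, together with basic properties of the Mar\v{c}enko--Pastur measure $\MPmeas{c}{1}$ inherited from Section~\ref{sec:gk and jr}.

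First I would dispose of \eqref{assert:cpv 0}. When $0<t<\rho<1$ the ratio $(1-t)/(1-\rho)$ strictly exceeds $1$, whereas $\G{c}{1}(x)\le 1$ for every $x$ by its definition as an integral normalized by the mean $1$ of $\MPmeas{c}{1}$. So the set appearing in the generalized inverse is empty, the convention $\inf\varnothing=\infty$ gives $q_c(t,\rho)=\infty$, and the convention $\overline{F_{c,1}}(\infty)=\lim_{x\uparrow\infty}\overline{F_{c,1}}(x)=0$ closes this item.

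Next I would settle \eqref{assert:cp nonincreasing in rho} by composing three monotonicities: the map $\rho\mapsto(1-t)/(1-\rho)$ is strictly increasing on $[0,1)$ and the map $t\mapsto(1-t)/(1-\rho)$ is strictly decreasing on $(0,1)$; the generalized inverse $\qtl{\G{c}{1}}$ of a nondecreasing function is itself nondecreasing; and the complementary distribution function $\overline{F_{c,1}}$ is nonincreasing. Chaining these yields $CP_{c,\rho}(t)$ nonincreasing in $\rho$ and nondecreasing in $t$.

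The substantive work is \eqref{assert:cont decreasing}. The plan is to exploit that $\MPmeas{c}{1}$ concentrates at its mean $1$ as $c\downarrow 0$: its support lies in $[a_1(c),b_1(c)]=[(1-\sqrt c)^2,(1+\sqrt c)^2]$ with no atom at $0$ once $c<1$, so
$$\sup_{x\in\R}\bigl|\G{c}{1}(x)-F_{c,1}(x)\bigr|\le\int|\lambda-1|\,d\MPmeas{c}{1}(\lambda)\le 2\sqrt c+c\longrightarrow 0.$$
Next I would note that for $c<1$ the density of $\MPmeas{c}{1}$ is strictly positive on $(a_1(c),b_1(c))$, so $\G{c}{1}$ is continuous and strictly increasing there. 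With $u:=(1-t)/(1-\rho)\in(0,1)$, the quantile $q_c$ lies in the interior of this interval and therefore satisfies $\G{c}{1}(q_c)=u$ exactly; combining this with the uniform bound gives $F_{c,1}(q_c)\to u$, whence $CP_{c,\rho}(t)=\overline{F_{c,1}}(q_c)\to 1-u=1-(1-t)/(1-\rho)$.

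I expect the main obstacle to be the last step of \eqref{assert:cont decreasing}: checking carefully that $q_c$ lies in the open support interval $(a_1(c),b_1(c))$ and that $\G{c}{1}(q_c)=u$ (rather than merely $\G{c}{1}(q_c)\ge u$, which is all the generalized inverse guarantees a priori), so that the sup-norm closeness between $\G{c}{1}$ and $F_{c,1}$ genuinely transfers to $F_{c,1}(q_c)\to u$. The remaining two items reduce to bookkeeping with the conventions for generalized inverses and complementary distribution functions.
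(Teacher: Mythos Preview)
Your proposal is correct. Parts \eqref{assert:cpv 0} and \eqref{assert:cp nonincreasing in rho} follow the paper's proof essentially verbatim (the paper is slightly terser and does not spell out the monotonicity in $t$, but it is the same argument).

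For part \eqref{assert:cont decreasing} you take a genuinely different route. The paper observes that on the interior of the support $G'_{c,1}(x)=xF'_{c,1}(x)$ (since the density of $G_{c,1}$ is $\lambda$ times the density of $F_{c,1}$, the mean being $1$), and then applies Cauchy's mean value theorem on $[a_1(c),\,q_c]$ to obtain $F_{c,1}(q_c)/G_{c,1}(q_c)=1/x'$ for some $x'\in(a_1(c),q_c)\subset(a_1(c),b_1(c))$; squeezing by $a_1(c),b_1(c)\to1$ gives $F_{c,1}(q_c)\to G_{c,1}(q_c)=s$. Your argument instead exploits the shrinking support directly via the uniform bound $\sup_x|G_{c,1}(x)-F_{c,1}(x)|\le\int|\lambda-1|\,d\MPmeas{c}{1}\le 2\sqrt{c}+c$, which is more elementary (no mean value theorem, no differentiability of the densities needed) and makes the rate of convergence explicit. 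The paper's approach, on the other hand, would survive in settings where $G$ and $F$ share a density ratio tending to $1$ at the quantile even without global concentration of the measure. Your anticipated obstacle---that $G_{c,1}(q_c)=u$ exactly---is handled in the appendix as Lemma~\ref{LEMMA3:Cp}, so you may simply cite it rather than reprove it.
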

\begin{proof} Let $s$ be $(1-t)/(1-\rho)$.
\eqref{assert:cont decreasing} By $0\le \rho<t<1$, $ (G_{c,1})^-\left(s\right)<\infty$.
Because of $F_{c,1}(a_1(c))=G_{c,1}(a_1(c))=0$ and Cauchy's mean value
	theorem, there is $x'$ such that $a_1(c)<x'< (G_{c,1})^-\left(s\right)$ and
	\begin{align*}
	 \frac{F_{c,1}\left((G_{c,1})^-\left(s\right)\right)}{G_{c,1}\left((G_{c,1})^-\left(s\right)\right)} = \frac{F'_{c,1}(x')}{G'_{c,1}(x')}=\frac{1}{x'}.
	\end{align*}
	This tends to $1$ in $c\downarrow0$, because
	$1\leftarrow a_1(c)<x'< (G_{c,1})^-\left(s\right)\le b_1(c)\to 1$. By
	$G_{c,1}((G_{c,1})^-(s))=s$, $\lim_{c\downarrow0}
	{F_{c,1}}(G_{c,1}^-(s))=s$ for $\rho<t$.

\medskip
\noindent\eqref{assert:cp nonincreasing in rho} It is because $CP_{c,\rho}(t)=\overline{F_{c,1}}$ $(\qtl{\G{c}{1}}(s))$, $G_{c,1}$ is nondecreasing, and
the complementary distribution function $\overline{F_{c,1}}$ is nonincreasing. 

\medskip
\noindent\eqref{assert:cpv 0} 
By $1>\rho>t>0$,  $s>1$. Because $G_{c,1}$ is a distribution function, $\qtl{\G{c}{1}}(s)=\inf\varnothing=\infty$. 
$\overline{F_{c,1}}$ is nonincreasing, so $CP_{c,\rho}(t)=\overline{F_{c,1}}(\infty)=0$.
\end{proof}
Because the centered sample covariance matrix $\tilde \S$ is invariant under shifting of variables, Figure~\ref{Fig:cpv} consists of the the graphs of $CP^{\tilde \S} (0.7)$ under the assumption of Theorem~\ref{thm:EqCor CPMconvCPc} with $n=1000$, $p=20,40,\ldots,19980,20000$; and $\rho=0\,\text{(solid)},\,0.3$ $\text{(dashed)},\,0.5\,\text{(dotted)},\,0.8\,\text{(dash-dot)}$. By Figure~\ref{Fig:cpv}, it seems that $CP_{c,\rho}(t)$ is decreasing in $c$.
\begin{figure}[ht]\centering
 		\includegraphics[scale=0.6]{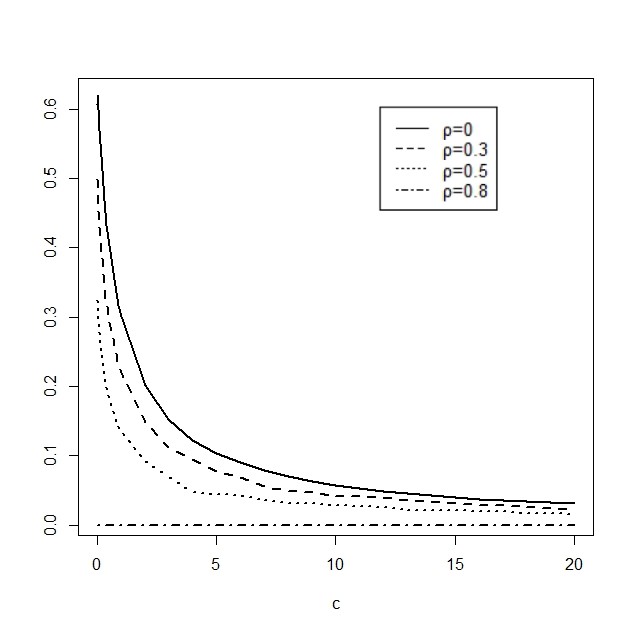}
\caption{$CP^{\tilde \S} (0.7)$ }
\label{Fig:cpv}
\end{figure} 

\begin{obs}\label{obs}
When the sample size $n$ and the dimension $p$ are large, we obtain:
\begin{enumerate}
\item The behavior of Guttman-Kaiser criterion (CPV rule, resp.) corresponds to $GK_{c,\rho}$ ($CP_{c,\rho}(t)$, resp.), by Theorem~\ref{conv3} (Theorem~\ref{thm:EqCor CPMconvCPc}, resp.).

\item \label{GK decreasing in rho} Both of $GK_{c,\rho}$ and $CP_{c,\rho}(t)$ are nonincreasing in the equi-correlation coefficient $\rho$.

\item\label{approach0.5} The limit of $GK_{c,\rho}$ in $c\downarrow0$ is $1/2$ for $\rho=0$ and 0 otherwise, but the limit of $CP_{c,\rho}(t)$ in $c\downarrow0$ is continuously nonincreasing in $\rho$.
\item From the combined Figure~\ref{fig:cpgk1} of Figure~\ref{Fig:qgk1} ($GK_{c,\rho}$) and Figure~\ref{Fig:cpv}  ($CP^{\tilde \S} (0.7)$), we could say that CPV rule retains sufficiently large number of principal components or factors in small $c$. In contrast, Guttman-Kaiser criterion retains small number of principal components or factors in small $c$. 
\end{enumerate}
\end{obs}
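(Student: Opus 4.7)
My plan is that Observation~\ref{obs} consolidates results already established in this section, so each of its four items reduces to citing an earlier theorem; no genuinely new computation is required. First, item~(1) is the direct content of Theorem~\ref{conv3} (for $GK^{\bR}$ and $GK^{\S}$) and Theorem~\ref{thm:EqCor CPMconvCPc} (for $CP^{\bR}(t)$ and $CP^{\S}(t)$), which already supply the almost sure convergence of the random quantities to their deterministic counterparts $GK_{c,\rho}$ and $CP_{c,\rho}(t)$ as $n,p\to\infty$ with $p/n\to c>0$.

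Next, item~(\ref{GK decreasing in rho}) is read off the monotonicity facts already proved: Theorem~\ref{thm:fig}(\ref{assert:nonincreasing in rho}) states that $GK_{c,\rho}$ is nonincreasing in $\rho\in[0,\,1)$, and item~(\ref{assert:cp nonincreasing in rho}) of the theorem on CPV rule stated immediately before Observation~\ref{obs} gives the analogous monotonicity for $CP_{c,\rho}(t)$. Both are immediate once one has the explicit formulas $GK_{c,\rho}=\overline{F_{c,1}}(1/(1-\rho))$ and $CP_{c,\rho}(t)=\overline{F_{c,1}}(\qtl{\G{c}{1}}((1-t)/(1-\rho)))$ together with nonincreasingness of $\overline{F_{c,1}}$.

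For item~(\ref{approach0.5}), I would combine Theorem~\ref{thm:fig}(\ref{assert:zero}) and Theorem~\ref{thm:fig}(\ref{assert:phase transition}) to exhibit the jump of $\lim_{c\downarrow0}GK_{c,\rho}$ from $1/2$ at $\rho=0$ to $0$ on $(0,\,1)$. For the CPV side, item~(\ref{assert:cont decreasing}) of the preceding CPV theorem gives $\lim_{c\downarrow0}CP_{c,\rho}(t)=1-(1-t)/(1-\rho)$ on $[0,\,t)$, while item~(\ref{assert:cpv 0}) gives the constant value $0$ on $(t,\,1)$; one must then check that these one-sided limits agree at $\rho=t$, which is immediate because $1-(1-t)/(1-t)=0$, so the pasted limit function is continuous and visibly nonincreasing on $[0,\,1)$. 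This matching at the boundary $\rho=t$ is the only mildly subtle step, but it is essentially automatic.

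Finally, item~(4) is a qualitative reading of the combined plot in Figure~\ref{fig:cpgk1}: the jump of the $GK$-limit at $\rho=0$ forces Guttman-Kaiser to retain a small fraction of components whenever $\rho>0$ and $c$ is small, whereas the continuous CPV-limit $1-(1-t)/(1-\rho)$ can be made close to $1$ by choosing $t$ near $1$ and $\rho$ small, so CPV retains a large fraction in the same regime. No argument beyond items~(1)--(3) and inspection of the figure is needed, and hence the only "obstacle" in the whole proof is verifying the boundary continuity of the piecewise CPV-limit in item~(3), which is resolved by a one-line substitution.
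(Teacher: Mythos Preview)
Your proposal is correct and matches the paper's treatment: the paper gives no separate proof for Observation~\ref{obs}, presenting it purely as a summary of the preceding theorems, and your item-by-item reduction to Theorem~\ref{conv3}, Theorem~\ref{thm:EqCor CPMconvCPc}, Theorem~\ref{thm:fig}, and the CPV theorem is exactly the intended reading. Your explicit check of continuity at the boundary $\rho=t$ in item~(\ref{approach0.5}) is a small addition beyond what the paper spells out, but it is correct and harmless.
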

\begin{figure}[ht]\centering
\includegraphics[scale=.65]{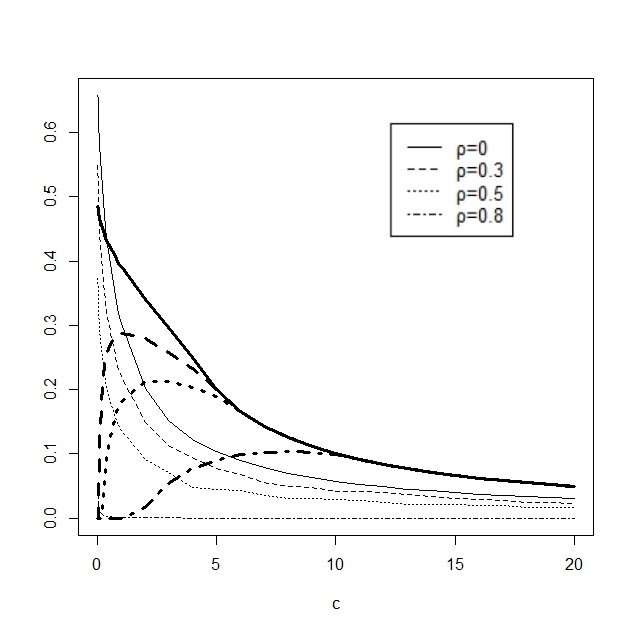}
\caption{Comparison of Guttman-Kaiser criterion (thick curves) and CPV rule.}
\label{fig:cpgk1} 
\end{figure}
\section{Empirical study of equi-correlation coefficient and Guttman-Kaiser criterion}\label{sec:app}

For the scaling parameter $1-\rho$ of Mar\v{c}enko-Pastur distribution of Theorem~\ref{thm:equilsd2},  the first author proposed $1-\lambda_1(\bR)/p$ in \cite{akama}:
\begin{thm}[\protect{Akama~\cite{akama}}]\label{thm:main}Let $\bR$ be a sample correlation matrix formed from a population $\N_p(\bm{\mu},\, \D \Rc(\rho)\D)$ for a deterministic vector $\bm{\mu}\in\R^p$, a deterministic nonsingular diagonal matrix $\D\in\R^{p\times p}$, and a deterministic constant $\rho\in[0,\,1)$. Suppose
 $p,n\to\infty$ and $p/n\to c\in(0,\,\infty)$. Then,
  \begin{align*}
  \frac{\lambda_1(\bR)}{p}\cas\rho.
\end{align*}
\end{thm}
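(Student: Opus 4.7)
\textit{Proof plan.} The strategy is to reduce the statement to an analysis of $\lambda_1(\tilde\S)$ for the centered sample covariance $\tilde\S=\E\E^\top$, and then exploit the rank-one spike that the common-factor decomposition of Proposition~\ref{lem:decomposition} places on $\tilde\S$. Because $\bR$ is invariant under affine rescaling of each coordinate, I may assume without loss of generality that $\bm\mu=\bm 0$ and $\D=\mathbf I$. Proposition~\ref{lem:decomposition} with $\sigma=1$ then gives
\begin{align*}
\tilde\X=\X-\bar\X=\sqrt\rho\,\bm 1_p\tilde{\bm\eta}^\top+\sqrt{1-\rho}\,\tilde\Xi,
\end{align*}
where $\tilde{\bm\eta}=(\eta_j-\bar\eta)_{j=1}^n\in\R^n$ is the centered common-factor vector and $\tilde\Xi\in\R^{p\times n}$ has entries $\xi_{ij}-\bar\xi_i$.

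Setting $D=\diag(d_i)$ with $d_i=\|\x_i-\bar\x_i\|/\sqrt n$ gives $\bR=D^{-1}\tilde\S D^{-1}$, so the Courant--Fischer characterization yields the sandwich $\lambda_1(\tilde\S)/d_{\max}^2\le\lambda_1(\bR)\le\lambda_1(\tilde\S)/d_{\min}^2$. Expanding $d_i^2$ via the decomposition, the strong law handles $\|\tilde{\bm\eta}\|^2/n\cas 1$, while Proposition~\ref{prop:bai} with $\alpha=\beta=1$ uniformly in $i\le p$ controls both the cross term and the squared-norm deviation, yielding $\max_i|d_i^2-1|\cas 0$. This reduces the problem to $\lambda_1(\tilde\S)/p\cas\rho$.

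The lower bound is the Rayleigh quotient at $\bm 1_p/\sqrt p$: $\lambda_1(\tilde\S)\ge\|\tilde\X^\top\bm 1_p\|^2/(np)$. Since $\tilde\X^\top\bm 1_p=\sqrt\rho\, p\,\tilde{\bm\eta}+\sqrt{1-\rho}\,\tilde\Xi^\top\bm 1_p$, the dominant contribution is $\rho p^2\|\tilde{\bm\eta}\|^2/n\sim\rho p^2$ almost surely, giving $\lambda_1(\tilde\S)/p\ge\rho+o(1)$. For the upper bound I apply Weyl's inequality to the expansion
\begin{align*}
\tilde\S=\tfrac{\rho\|\tilde{\bm\eta}\|^2}{n}\bm 1_p\bm 1_p^\top+\tfrac{\sqrt{\rho(1-\rho)}}{n}\bigl(\bm 1_p\tilde{\bm\eta}^\top\tilde\Xi^\top+\tilde\Xi\tilde{\bm\eta}\bm 1_p^\top\bigr)+\tfrac{1-\rho}{n}\tilde\Xi\tilde\Xi^\top.
\end{align*}
The first rank-one term contributes $\rho p(1+o(1))$; the rank-two cross term has operator norm at most $2\sqrt{\rho(1-\rho)}\sqrt p\,\|\tilde\Xi\tilde{\bm\eta}\|/n=O(\sqrt p)$ (using $\|\tilde\Xi\tilde{\bm\eta}\|^2\sim pn$ by conditional LLN); and $(1-\rho)\tilde\Xi\tilde\Xi^\top/n$ has operator norm $O(1)$ almost surely. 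Thus $\lambda_1(\tilde\S)\le\rho p+O(\sqrt p)$, which together with the lower bound proves $\lambda_1(\tilde\S)/p\cas\rho$.

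The main obstacle is the almost-sure spectral-norm bound $\|\tilde\Xi\tilde\Xi^\top/n\|=O(1)$ on the noise Wishart, because the weak LSD convergence used in Proposition~\ref{prop:ji3} and Theorem~\ref{thm:equilsd2} only controls the bulk and is consistent with outlying eigenvalues. This step requires the Bai--Yin strong law on the largest singular value of an i.i.d.\ Gaussian matrix, applied here through the contraction $\|\tilde\Xi\|_{op}\le\|\Xi\|_{op}$ (since $\tilde\Xi=\Xi(\mathbf I-\bm 1_n\bm 1_n^\top/n)$). This tool is external to Section~\ref{sec:gk and jr}; with it in hand, all remaining estimates reduce to elementary applications of the decomposition and Proposition~\ref{prop:bai}.
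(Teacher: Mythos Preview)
The paper does not supply its own proof of this theorem; it is quoted from the companion preprint \cite{akama} and used only as an estimator of $\rho$ in the empirical Section~\ref{sec:app}. So there is no in-paper argument to compare against.

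Your plan is sound and is the natural one given the tools of Section~\ref{sec:gk and jr}. The affine invariance reduction, the sandwich $\lambda_1(\tilde\S)/d_{\max}^2\le\lambda_1(\bR)\le\lambda_1(\tilde\S)/d_{\min}^2$ from $\bR=D^{-1}\tilde\S D^{-1}$, and the uniform control $\max_i|d_i^2-1|\cas0$ via Proposition~\ref{prop:bai} are all correct and mirror the computations already carried out in the proof of Theorem~\ref{thm:equilsd2}. The Rayleigh-quotient lower bound at $\bm 1_p/\sqrt p$ and the three-term Weyl upper bound on $\tilde\S$ are the standard spike-plus-noise analysis, and your orders $\rho p$, $O(\sqrt p)$, $O(1)$ for the three pieces are correct.

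Two small points worth tightening. First, for the cross term you can bypass the ``conditional LLN'' by the cruder but cleaner estimate $\|\tilde\Xi\tilde{\bm\eta}\|\le\|\tilde\Xi\|_{op}\|\tilde{\bm\eta}\|=O(\sqrt n)\cdot O(\sqrt n)=O(n)$, which already gives the $O(\sqrt p)$ bound and reuses exactly the Bai--Yin input you need anyway for the third term. Second, you correctly flag that the almost-sure edge bound $\|\Xi\Xi^\top/n\|_{op}\to(1+\sqrt c)^2$ is external to the paper's toolkit; note that this is precisely Theorem~2 of the same Bai--Yin reference \cite{Bai} from which Proposition~\ref{prop:bai} is taken, so the dependence is not on a genuinely new source. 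With that one ingredient your argument is complete.
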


With this, we expound Observation~\ref{obs} by real datasets such as a binary multiple sequence alignment (MSA) dataset~\cite{querde}, the microarray datasets~\cite{ramey}, the returns of S\&P500 stocks of specific periods~\cite{laloux}, and the households datasets~\cite{stat}. 

\subsection{Datasets from molecular biology}

For vaccine design, Quadeer et al.~\cite{querde}
considered a multiple sequence alignment (MSA) 
of a $p$-residue (site) protein with $n$ sequences where $p=475$ and $n=2815$. They converted
the MSA into a binary code following~\cite{dahi,halabi}, and then considered the correlation matrix $\bR$. From this, Quadeer et al.~\cite{querde,10.1371/journal.pcbi.1006409} detected signals 
by ingenious randomization for the MSA. At the same time, they considered an alternative method that employs Mar\v{c}enko-Pastur distribution. We also examine our study of Guttman-Kaiser criterion with Mar\v{c}enko-Pastur distribution, by using their binary MSA dataset of Quadeer et al.~\cite{querde,10.1371/journal.pcbi.1006409}. 

The binary MSA dataset~\cite{querde} is sparse by the heat map~(Figure~\ref{fig:medical}~(a)) of the dataset with hierarchical clustering on columns and rows.
As for the $p^2=225625$ entries of the correlation matrix $\bR$, the minimum, the first quantile, the median, the mean, the third quantile, and the maximum are as in Table~\ref{summary of PCM}.
\begin{table}[ht]\centering
\begin{tabular}{|c|c|c|c|c|c|}
\hline
Min.  &  1st Qu.  &   Median     &  Mean   & 3rd Qu.   &    Max. \\
\hline
$-0.2892210$ &$-0.0043780$ &$-0.0017447$  &$0.0068210$& $-0.0006157$ & 1.0000000 \\
\hline
\end{tabular}
\caption{The summary of the entries of the correlation matrix of the binary MSA dataset.}\label{summary of PCM}
\end{table}

Figure~\ref{fig:medical}~(b) is the heat map of
$\bR$ with hierarchical clustering on columns and rows.
\begin{figure}[ht]
    \centering
   \subfigure[]{\includegraphics[width=0.4\textwidth]{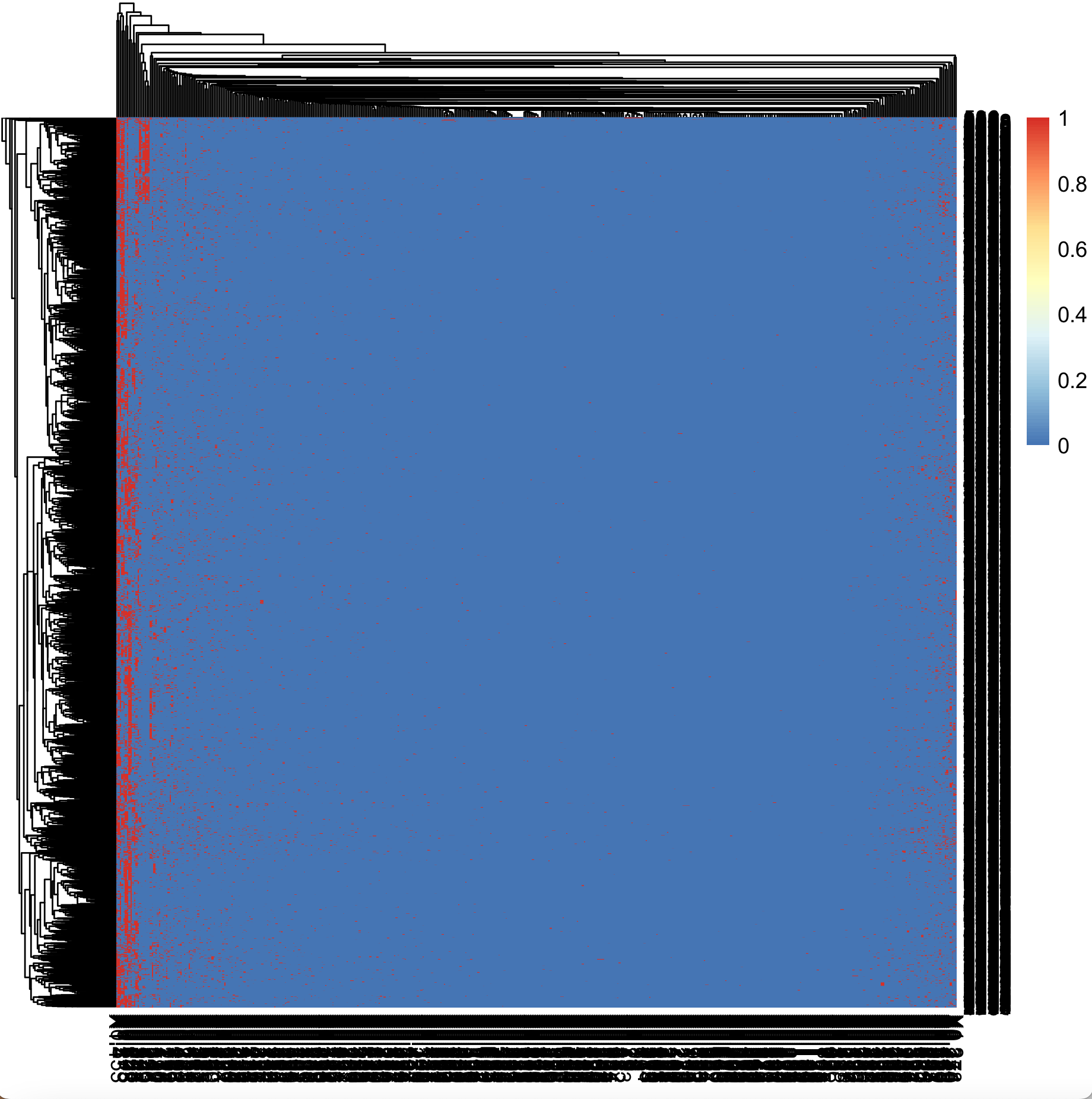}} 
    \subfigure[]{\includegraphics[width=0.4\textwidth]{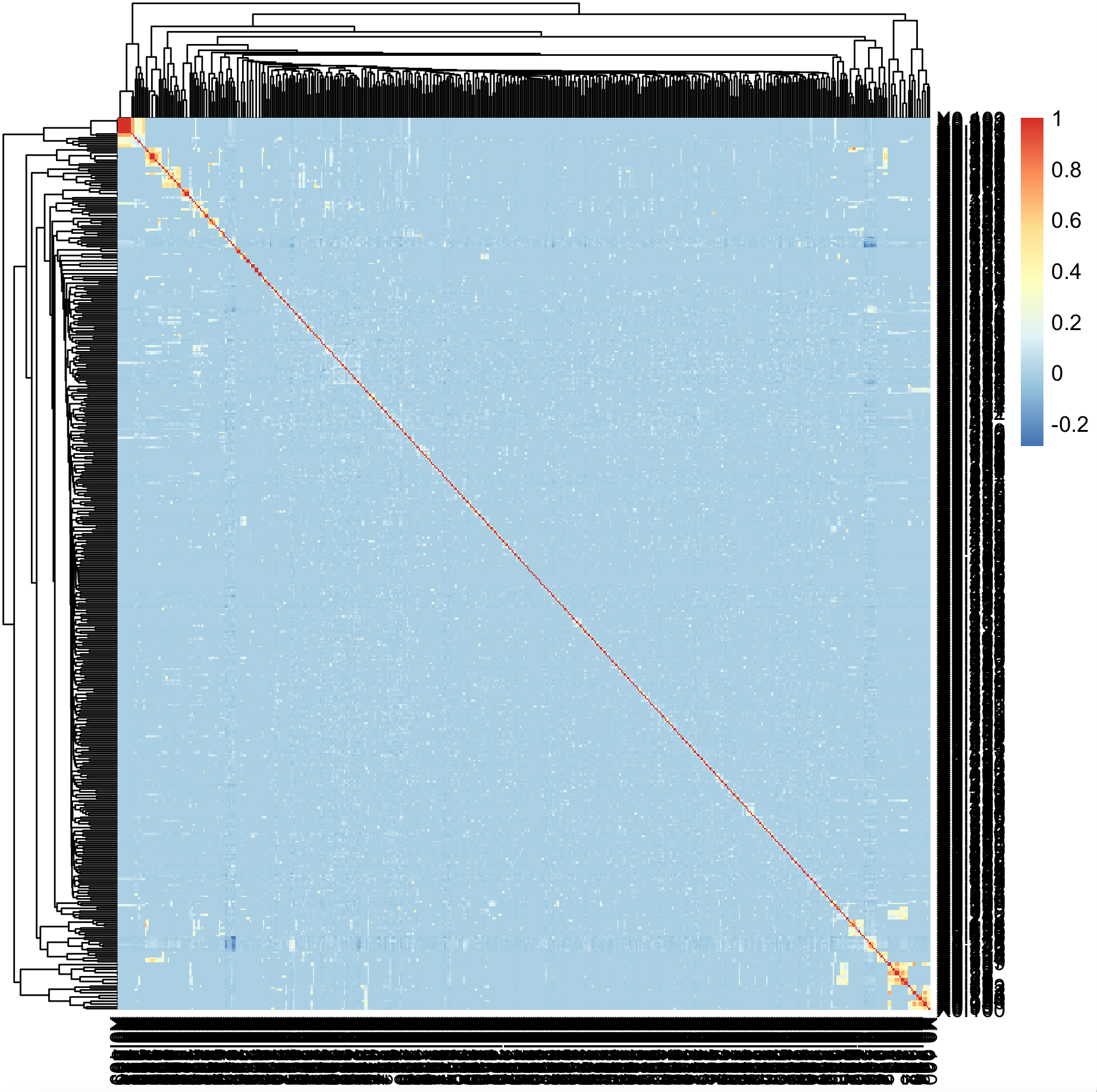}\label{heatmapperson}} 
    \subfigure[]{\includegraphics[width=0.67\textwidth]{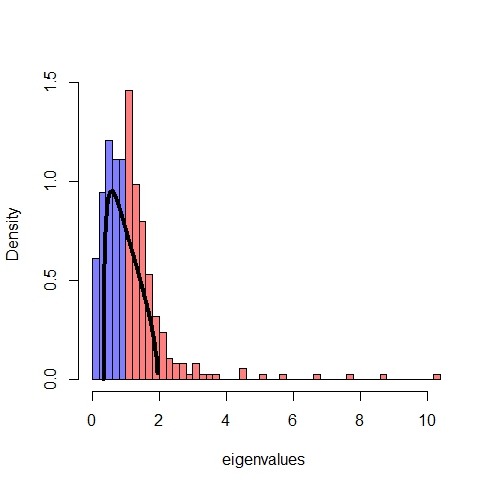}\label{pearson}} 
    \caption{\label{fig:medical}The binary MSA dataset. (a) The data matrix, (b) the correlation matrix $\bR$, and (c) the eigenvalues of $\bR$.}
\end{figure}
 Figure~\ref{fig:medical}~(c) is the histogram of the eigenvalues of $\bR$. The red bins are for the eigenvalues more than 1 and the blue bins are for the eigenvalues less than 1. The black solid curve is the density of Mar\v{c}enko-Pastur distribution with index $p/n=0.16$ and scale $1-\lambda_1(\bR)/p=0.98$.
There is a bin-width such that the histogram fits the density well.  
The $GK^\bR=0.39$ has 10\% error from the estimated value $GK_{{0.16},0.02}=0.44$. This may be related to Observation~\ref{obs}~(\ref{approach0.5}).

Next, we consider 16 microarray datasets from \cite{ramey}. Table~\ref{table:micro} is the list of the author of a microarray dataset, $p/n$, $\lambda_1(\bR)/p$, $GK^{\bR}$, $GK_{p/n,\lambda_1(\bR)/p}$, $CP^\bR(.7)$ and $p$, in the increasing order of $p/n$. Here $p$ is the number of features and $n$ is the number of observations.
\begin{longtable}{ |c|c|c|c|c|c|c|c| } 
			\hline
	No. & Name& $p/n$& $\lambda_1(\bR)/p$ & $GK^\bR$& $GK_{p/n,\lambda_1(\bR)/p}$ & $CP^\bR(.7)$ & $p$  \\
	\hline
	1 & Sorlie &\ \ \ 5.4 & .110 & .16228 \  & .18640 \ & .05044 & \ \ \ 456 \\
	2 & Gravier &\ \ 17.3  & .083 & .05370 \  & .05783 \ &.01136 & \ 2905\\
	3 & Alon &\ \ 32.3 &  .450 & .03000 \  &.03100 \ & .00200&\ 2000\\
	4 & Yeoh &\ \ 50.9 & .145& .01956 \  &.01964 \ & .00539 &  12625\\
	5 & Gordon & \ \ 69.2 & .087& .01436 \  & .01444 \ &.00367  &12533\\
	6 & Tian &\ \ 73.0 & .089 & .01362 \ & .01370 \ & .00554 &12625\\
	7 & Shipp &\ \ 92.6  &.213 & .01066 \ & .01080 \ & .00182&\ 7129\\
	8 & Chiaretti & \ \ 98.6 & .181 & .01006  \  & .01014  \ &.00190 &12625\\
	9 & Golub & \ \ 99.0 &.149 & .00996 \  & .01010 \ &.00351 &\ 7129\\
	10 & Pomeroy &\ 118.8  &.266 & .00828 \ & .00842 \ &.00224&\ 7128\\
	11 & West & \ 145.5 &.162 & .00673 \  & .00687 \ & .00224&\ 7129\\
	12 & Burczynski & \ 175.5 &.115 & .00565 \ & .00570 \ & .00193 & 22283\\
	13 & Chin & \ 188.3 &.164 & .00527 \  & .00531 \ & .00158& 22215\\
	14 & Nakayama  & \ 212.2 &.073 & .00467 \ & .00471 \ &.00202 & 22283\\
	15 & Chowdary & \ 214.2 & .699 & .00462 \  & .00467 \ &.00004 &22283\\
	16 & Borovecki &\ 718.8   &.173 & .00135 \ & .00139 \ & .00058 & 22283\\
	\hline
	\caption{DNA microarray datasets.}
	\label{table:micro}
\end{longtable}

Table~\ref{table:micro} shows that $GK^\bR>CP^\bR(.7)$ and $GK^\bR$ is nonincreasing in $p/n$ as Figure~\ref{fig:cpgk1}. Moreover, by Theorem~\ref{thm:fig}~(\ref{assert:all nonzero}), $GK_{p/n,\lambda_1(\bR)/p}$ is  $n/p$ because all datasets have $p/n$ more than $(1/\sqrt{1-\lambda_1(\bR)/p}+1)^2$. Indeed, the empirical values $GK^\bR$ are around $n/p$ for all datasets having $p/n$ sufficiently greater than the threshold $(1/\sqrt{1-\lambda_1(\bR)/p}+1)^2$.

\subsection{Datasets from economics}
We consider the datasets of  returns of $p$ S\&P500 stocks for $n$ trading
days.  Laloux et al.~\cite{laloux} fitted  the histogram of the
correlation matrix $\bR$ of such a dataset to the density function of a scaled Mar\v{c}enko-Pastur distribution.

Table~\ref{tbl:212} is the list of $p/n$, $\lambda_1(\bR)/p$, $GK^{\bR}$, $GK_{p/n,\lambda_1(\bR)/p}$, and $p=212$ S\&P500 stocks of various periods, in the increasing order of $\lambda_1(\bR)/p.$
\begin{table}[ht]
\begin{tabular}{|c|c|c|c|c|c|c|}
\hline
No&Period & $p/n$ & $\lambda_1(\bR)/p$ &  $GK^\bR$& $GK_{p/n,\lambda_1(\bR)/p}$ &$p$ \\
\hline
1&1993-01-04-1995-12-29 	  &.2808 &.1103 &.3302 & \ \, .3747 &212 \\
2&1993-01-04-2022-08-01 	  &.0285 &.3137 &.1038 & 0 \ \ \ &212  \\
3&2012-08-01-2022-08-01	  &.0843 &.4026 &.0991  & 0 \ \ \ &212  \\
4&2005-01-04-2022-08-01	  &.0479 &.4232 &.0849 &0 \ \ \ &212  \\
5&2005-01-04-2013-12-30	  &.0938 &.4503 &.0849 & 0 \ \ \ &212 \\
\hline
\end{tabular}\caption{\label{tbl:212}The returns of S\&P500 datasets.}
\end{table}
$GK^\bR$ is decreasing in $\lambda_1(\bR)/p$ in Table~\ref{tbl:212}, as Observation~\ref{obs}~\eqref{GK decreasing in rho}. It is worth noting that the estimator $GK_{p/n,\lambda_1(\bR)/p}$ is mostly 0.

Next, we consider similar but more categorized datasets. The return of a stock is more correlated with the return of a stock of the
same industry classification sector than with the return of a stock  of a different
industry classification sector.  
Table~\ref{tbl:gics} is the list of \emph{global industry classification
standard} (GICS) sector of S\&P500, $p/n$, $\lambda_1(\bR)/{p}$, $GK^\bR$, $GK_{p/n,\lambda_1(\bR)/p}$,  and $p$. All the periods are 2012-01-04/2022-12-31. Table~\ref{tbl:gics} is ordered in the increasing order of $\lambda_1(\bR)/p$.
All the $p$ of Table~\ref{tbl:gics} are smaller than the $p=212$ of Table~\ref{tbl:212}, but some GICS sectors have larger $\lambda_1(\bR)/p$. In Table~\ref{tbl:gics}, 
$GK^\bR$ is decreasing if we omit 
the first line~(Communication services, $p=19$),
the fourth line~(Consumer Staples, $p=23$), 
the seventh line~(Material, $p=24$), the tenth line~(Energy, $p=16$) from all the  lines.
 Observation~\ref{obs}~\eqref{GK decreasing in rho} for Guttman-Kaiser criterion holds for $p\ge28$. 

\begin{table}[ht]
\begin{tabular}{ |c|c|l|c|c|c|c|}
\hline
No&GICS & $p/n$ & $\lambda_1(\bR)/p$ &  $GK^\bR$& $GK_{p/n,\lambda_1(\bR)/p}$ &$p$ \\
\hline
1& Communication Serv. & .0076 & .3571 & .2105& 0&19\\
2&Consumer Discret.
 & .0207  & .3843 & .1538&0&52 \\  
3&Health Care 	&	.0187& .3948	&	.1489&0&47\\
4&Consumer Staples   &	.0091	& .4302 &	.1739 &0 &23\\
5&Information Tech.  	&	.0246	& .4648 &	.0968&0&62\\
6&Industrials 	&	.0258	& .4985 &	.0923&0&65\\
7&Materials & .0095 & .4990 & .1667 &0& 24\\
8&Real estate & .0119 & .5819 & .1000 & 0& 30\\
9&Financials 	&	.0250	& .6086&	.0794&0&63\\
10&Energy 	&	.0064 & .6872	&	.0625&0&16\\
11&Utilities & .0111	& .6897 & .0714 &0& 28\\
\hline	 	
\end{tabular}
\caption{The returns of S\&P500 stocks per GICS. 
}
\label{tbl:gics}
\end{table}
The estimator $GK_{p/n,\lambda_1(\bR)/p}$ of $GK^\bR$ in Table~\ref{tbl:212} and Table~\ref{tbl:gics} are all 0 for $\lambda_1(\bR)/p>0.110$. The estimator $GK_{p/n,\lambda_1(\bR)/p}=1-F_{p/n,1}\left(1/(1-\lambda_1(\bR)/p)\right)$ is $0$ if $p/n<1$ but $\lambda_1(\bR)/p<1$ is sufficiently large. The first author computed the time series of equi-correlations for the datasets, by employing GJR GARCH~\cite{GJR} with correlation structure being \emph{dynamic equicorrelation} \cite{eng-kel}. Then,  $\lambda_1(\bR)/p$ is always larger than the time averages of the time series of the equi-correlation coefficient.

To detect the correlation structure from stock datasets~(Table~\ref{tbl:212} and Table~\ref{tbl:gics}), we apply a hierarchical clustering algorithm to the sample correlation matrices $\bR$ of the datasets, and then computed their color heat maps. We found that they are very different from the heat map Figure~\ref{heatmapperson} of the correlation matrix of the binary MSA dataset. The correlation matrices of the stock returns have diagonal block structures. Moreover, the sizes of the GICS correlation matrices are small. By these,  $GK^\bR$ of the stock returns datasets may be discrepant from our estimator  $GK_{p/n,\lambda(\bR)/p}$.

Finally, we consider two household datasets in 2019  by area classification, from \cite{stat}. One is for the amount of assets per households and the other is for the average yearly income. These datasets have response variables: the amount of assets and liabilities per household and the average yearly income from the whole of Japan. Meanwhile, the explanatory variables are the amount of assets per households and the average yearly income from each of 66 regions in Japan. 
We can detect the \emph{multicollinearity}~\cite{chatterjee2006regression,J} by \emph{variance inflation factors} (VIFs)~\cite{chatterjee2006regression,J} of the explanatory variables. If all the explanatory variables are uncorrelated, then all the VIFs are equal to 1, but if severe multicollinearities exist, then the VIFs for some explanatory variables become very large~\cite{chatterjee2006regression,J}.  
\begin{table}[ht]
\begin{tabular}{|c|c|c|c|c|c|c|}
\hline
Name& $\min$ VIF& $R^2$ & $\lambda_1(\bR)/p$ & $GK^{\bR}$& $p$& $p/n$\\
\hline
The amount of assets 2019 &8.44$\times10^2$ & 1  &.980 & .015  &66 &.051 \\
The average yearly income 2019&1.47$\times10^4$ & 1  &.993 & .015  &66 &.048 \\
\hline
\end{tabular}
\caption{The household datasets of 2019.\label{tbl:household}
}
\end{table}

Table~\ref{tbl:household} is the list of name, $\min$ VIFs, the (adjusted) coefficient $R^2$ of determination,  $\lambda_1(\bR)/p$, $GK^{\bR}$, $p=66$ variables, and $p/n$, in the increasing order of $\lambda_1(\bR)/p.$  Since $R^2=1$ for both datasets, we can assume that all the explanatory variables have the equi-correlation coefficient $\rho=1$, which means $\lambda_1(\bR)/p$ is approximately 1. One of future work is to discuss the multicollinearity and other correlation structures~\cite{bai-zhou, bryson,eng-kel,fan,david,Pe} among variables  with the extreme eigenvalues and the bulk eigenvalues of datasets, by employing random matrix theory. 

\section{Conclusion}
\label{sec:conclusion}
For assessing the (essential) dimensionality of empirical data, the Guttman-Kaiser criterion is a widely employed criterion. This rule may be the most used for retaining principal components and factors owing to its clarity, ease of implementation~\cite{lean}, and default stopping rule in statistical tools such as SPSS and SAS.
In this paper, we have shown the scaling of Mar\v{c}enko-Pastur distribution when a dataset is from an ENP and $0\le \rho<1$ by the LSD of the sample correlation matrix.
This scaling of Mar\v{c}enko-Pastur distribution explains the ``phase transitions'' of Guttman-Kaiser criterion depending on whether $\rho=0$ or not as $n,p\to\infty,\ p/n\to c>0$. Moreover, by Observation~\ref{obs}, we show the behavior of Guttman-Kaiser criterion where this criterion retains the small number of principal components or factors in small $c$ for $0<\rho<1$ and the limit of $GK^{\bR}$ is 1/2 in $c\downarrow0$ for $\rho=0$. In high-dimensional statistics of various fields, when the number of variables are smaller than the size of a sample, a global correlation among the variables causes a perceptible global impact, even if the correlation is minute. 

\section*{Acknowledgment}
The authors are partially supported by Graduate School of Science, Tohoku University. The MSA dataset is kindly provided from Prof. A. A. Quadeer through the email correspondence with  Prof.  M. R. McKay on 22 September, 2022. 

\appendix
\section{Proofs of Theorem~\ref{thm:EqCor CPMconvCPc} and Theorem~\ref{thm:null CPMconvCPc}}
Suppose that $\M$ is a positive real symmetric semi-definite matrix of order $p$ where all $p$ eigenvalues follow~\eqref{eigenvalues1}.
For $k>0$, the eigenvalues of matrix $k\M$ are~\eqref{eigenvalues1} multiplied by $k$ because the eigenvalues of matrix $\M$ follows~\eqref{eigenvalues1}. Thus, for a random distribution function $G^\M$ defined by \eqref{def:GT}, $G^{k\M}(kx)=G^\M(x)$ for any $x\in\R$. 
We represent $G^\M$ by using $x\vee 0:=\max(x,0)$ and  the probability measure $\meas^\M$ corresponding to the distribution function $F^\M$. 
\begin{lem}\label{lem:GM}
  \begin{align*}
G^\M(x)&=\frac{\int_{[0,x\vee0]}\lambda\, d\meas^\M(\lambda)}{\int_{\R}\lambda\, d\meas^\M(\lambda)},
\qquad(x\in\R).
\end{align*}
\end{lem}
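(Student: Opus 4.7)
The plan is to unfold the definitions of $F^\M$, $\meas^\M$, and $G^\M$ and reduce the claim to a bookkeeping identity between a sum over eigenvalues and an integral against the empirical spectral measure. Since $F^\M$ is, by definition, the empirical distribution of the eigenvalues $\lambda_1\ge\cdots\ge\lambda_p$ of $\M$, the associated probability measure is the atomic measure $\meas^\M=p^{-1}\sum_{i=1}^p\delta_{\lambda_i}$. Consequently, for any Borel set $A\subseteq\R$ and any Borel-measurable $f$, one has $\int_A f\,d\meas^\M=p^{-1}\sum_{\lambda_i\in A}f(\lambda_i)$. In particular, the denominator unfolds as $\int_\R\lambda\,d\meas^\M(\lambda)=p^{-1}\sum_{i=1}^p\lambda_i$, which matches the denominator of $G^\M(x)$ up to the common factor $p$ that will cancel.

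For the numerator, I would split on the sign of $x$, using that $\M$ is positive semidefinite, so $\lambda_i\ge 0$ for all $i$. If $x\ge 0$, then $\{i:\lambda_i\le x\}=\{i:\lambda_i\in[0,x]\}$, so
\begin{align*}
\sum_{\lambda_i\le x}\lambda_i \;=\; \sum_{\lambda_i\in[0,x]}\lambda_i \;=\; p\int_{[0,x]}\lambda\,d\meas^\M(\lambda) \;=\; p\int_{[0,x\vee 0]}\lambda\,d\meas^\M(\lambda).
\end{align*}
If $x<0$, then no $\lambda_i$ satisfies $\lambda_i\le x$, so the left-hand side of $G^\M(x)$'s numerator is the empty sum $0$; on the right, $x\vee 0=0$ and $\int_{[0,0]}\lambda\,d\meas^\M(\lambda)=0\cdot\meas^\M(\{0\})=0$, matching. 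Dividing by the denominator computed above yields the displayed identity for every $x\in\R$. There is no real obstacle; the only point requiring a moment's care is the $x<0$ case, and the $x\vee 0$ in the statement is precisely the device that makes the two sides vanish consistently.
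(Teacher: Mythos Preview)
Your proof is correct and follows the same idea as the paper's: identify $\meas^\M$ as the atomic measure $p^{-1}\sum_i\delta_{\lambda_i}$, use $\lambda_i\ge0$ (so $\meas^\M((-\infty,0))=0$), and match numerator and denominator. If anything, you are more explicit on the integral identity itself, whereas the paper's proof is terse on this point and instead spends most of its lines checking that $G^\M$ is a genuine distribution function.
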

\begin{proof}
By~\eqref{eigenvalues1}, $\meas^\M\left( (-\infty,0)\right)=0$. Because $F^\M(x)=p^{-1}\sum_{i=1}^p \I_{\lambda_i\le x}$ for all $x\in\R$, $G^\M(x)=\sum_ {\lambda_i\le x}\lambda_i/\sum_{i=1}^p\lambda_i$. Since ${\sum_ {\lambda_i\le x} \lambda_i}\le {\sum_ {\lambda_i\le z} \lambda_i}$ for $x\le z$, $G^\M$ is nondecreasing. Since $[\lambda_i,\, \infty)$ has a left endpoint, the right-continuity is clear from $G^\M$. Moreover, $\lim_{x\to -\infty}G^\M(x)=0$ and $\lim_{x\to +\infty}G^\M(x)=1$. Therefore $G^\M$ is indeed a random distribution function.
\end{proof}
\begin{lem}\label{lem:finiteGm}
For $u\in(0,1)$, $\min\set{\lambda_i | \lambda_i>0,\ 1\le i\le p}\le \qtl{G^\M}(u)\le \lambda_1$.
\end{lem}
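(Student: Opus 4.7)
The plan is to establish the two inequalities separately by unpacking the definition $\qtl{G^\M}(u)=\inf\{x\in\R\mid G^\M(x)\ge u\}$ and evaluating the random distribution function $G^\M$ at the endpoints using the explicit formula from Lemma~\ref{lem:GM}. Write $\lambda_1\ge \lambda_2\ge\cdots\ge\lambda_p\ge 0$ for the eigenvalues of $\M$, and let $m=\min\set{\lambda_i\mid \lambda_i>0,\ 1\le i\le p}$ (which exists, since $G^\M$ requires at least one positive eigenvalue to be well-defined).

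For the upper bound $\qtl{G^\M}(u)\le \lambda_1$: every eigenvalue satisfies $\lambda_i\le \lambda_1$, so by Lemma~\ref{lem:GM} (equivalently by the representation $G^\M(x)=\sum_{\lambda_i\le x}\lambda_i/\sum_{i=1}^p\lambda_i$) one obtains $G^\M(\lambda_1)=1\ge u$. Hence $\lambda_1$ lies in the set $\{x\in\R\mid G^\M(x)\ge u\}$, and therefore the infimum of this set is at most $\lambda_1$.

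For the lower bound $m\le \qtl{G^\M}(u)$: fix any $x<m$. Then no positive eigenvalue satisfies $\lambda_i\le x$, so the only eigenvalues that can contribute to the numerator of $G^\M(x)$ are zero eigenvalues, and these contribute $0$. Consequently $G^\M(x)=0<u$, so no $x<m$ belongs to $\{y\in\R\mid G^\M(y)\ge u\}$. Taking the infimum yields $\qtl{G^\M}(u)\ge m$.

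There is no genuine obstacle here; the proof is essentially bookkeeping with $x\vee 0$ and the restriction of the integration domain to $[0,x\vee 0]$ in Lemma~\ref{lem:GM}. The only point requiring a touch of care is that zero eigenvalues lie in the support of $\meas^\M$ but are annihilated by the integrand $\lambda$, so they do not prevent $G^\M(x)$ from vanishing on the interval $[0,m)$.
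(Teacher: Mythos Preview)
Your proof is correct and follows essentially the same approach as the paper: both arguments evaluate $G^\M$ at the two endpoints and read off the inequalities from the definition of the generalized inverse. The only cosmetic difference is that the paper first rewrites $\qtl{G^\M}(u)$ in the complementary form $\inf\{x\mid \sum_{\lambda_i>x}\lambda_i/\sum_i\lambda_i\le 1-u\}$ before checking the endpoints, whereas you work directly with $G^\M(x)\ge u$; the content is identical.
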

\begin{proof}
By \eqref{def:GT}, 
\begin{align}\qtl{G^\M}(u)
=\inf\Set{x| \sum_{\lambda_i> x} \lambda_i \big/ \sum_ {i=1}^p \lambda_i\le 1-u}.\label{qtl:GM}\end{align}
Because $\lambda_1$ is the largest eigenvalue of $\M$,
$\sum_{\lambda_i>\lambda_1}\lambda_i=0$. 
By $0<u<1$, $\lambda_1$ is an element of the set in the right side of \eqref{qtl:GM}. 
Hence, $\left(G^\M\right)^-(u)\le\lambda_1$. 
On the other hand, if $x=0$ and $x< \min_{1\le i\le p}\lambda_i$, $\sum_{\lambda_i> x} \lambda_i/\sum_ {i=1}^p \lambda_i=1$. Thus, by $u\in(0,1)$, whenever $\sum_{\lambda_i> x} \lambda_i \big/ \sum_ {i=1}^p \lambda_i\le 1-u$, $x \ge\min\set{\lambda_i | \lambda_i>0,\ 1\le i\le p}$. Therefore, $\min\set{\lambda_i | \lambda_i>0,\ 1\le i\le p}\le\left(G^\M\right)^-(u)\le\lambda_1$. 
\end{proof}

In sequel, for $t\in(0,\,1)$, we  show that $\qtl{G^\M}(1-t)$ is a threshold for eigenvalues CPV rule retains.
In Definition~\ref{def:cp}, we gave the representation of  $CP_{c,\rho}$ with the complementary distribution function of $F_{c,1}$ and the quantile function of $G_{c,1}$.
Like this, but by replacing the subscripts with the superscripts $\M$ and by replacing $(1-t)/(1-\rho)$ with $1-t$,
we  have 
the following representation of $CP^{\M}(t)$:
\begin{lem}\label{lem:CPM}For any real symmetric positive semi-definite matrix $\M$,   $$CP^{\M}(t)=\overline{F^\M}(\qtl{G^\M}(1-t)),\qquad(0<t<1).$$
\end{lem}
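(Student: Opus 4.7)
The plan is to locate the integer $q^*\in\{0,1,\ldots,p-1\}$ that realises the maximum in Definition~\ref{def:CPM} (the max exists because the admissible set is finite and nonempty: the convention $\lambda_0=\lambda_1+1$ makes $q=0$ always admissible). Then I would show $CP^\M(t)=q^*/p$, verify the identity $\qtl{G^\M}(1-t)=\lambda_{q^*+1}$, and finally compute $\overline{F^\M}(\lambda_{q^*+1})=q^*/p$ using the strict drop $\lambda_{q^*}>\lambda_{q^*+1}$. Writing $S=\sum_{i=1}^p\lambda_i>0$, the CPV inequality $\sum_{i=1}^q\lambda_i/S\le t$ is equivalent to $\sum_{i=q+1}^p\lambda_i/S\ge 1-t$, which is exactly the form needed to compare with $G^\M$ via Lemma~\ref{lem:GM}.

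The crux is the identification $\qtl{G^\M}(1-t)=\mu^*:=\lambda_{q^*+1}$. For the upper bound, the strict drop $\lambda_{q^*}>\lambda_{q^*+1}$ yields $\{i:\lambda_i\le\mu^*\}=\{q^*+1,\ldots,p\}$, so
\[
G^\M(\mu^*)=\frac{\sum_{i=q^*+1}^p\lambda_i}{S}\ge 1-t,
\]
which gives $\qtl{G^\M}(1-t)\le\mu^*$. For the lower bound, set $q'':=\max\{i:\lambda_i=\mu^*\}\ge q^*+1$. If $q''=p$ then no eigenvalue is strictly below $\mu^*$, so $G^\M(x)=0$ for every $x<\mu^*$. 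If $q''<p$, then $\lambda_{q''}=\mu^*>\lambda_{q''+1}$, so $q''$ meets the strict-drop clause of Definition~\ref{def:CPM}; maximality of $q^*$ together with $q''>q^*$ then forces the sum clause to fail strictly, $\sum_{i=1}^{q''}\lambda_i>tS$, i.e.\ $\sum_{i=q''+1}^p\lambda_i<(1-t)S$. For any $x<\mu^*$ we then obtain
\[
G^\M(x)\le\frac{\sum_{i:\lambda_i<\mu^*}\lambda_i}{S}=\frac{\sum_{i=q''+1}^p\lambda_i}{S}<1-t,
\]
so $\qtl{G^\M}(1-t)\ge\mu^*$, combining to equality.

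With $\qtl{G^\M}(1-t)=\mu^*$ in hand, the strict inequality $\lambda_{q^*}>\mu^*$ once more gives $\#\{i:\lambda_i\le\mu^*\}=p-q^*$, hence $F^\M(\mu^*)=(p-q^*)/p$ and therefore $\overline{F^\M}(\mu^*)=q^*/p=CP^\M(t)$, which is the claim. I anticipate the main obstacle to be the careful bookkeeping of ties: the strict-drop clause $\lambda_q>\lambda_{q+1}$ is what guarantees that $q^*$ picks out a genuine jump of $G^\M$, preventing the quantile from landing strictly below $\mu^*$ inside a flat piece; the edge cases $q^*+1=p$ and $q''=p$ must be handled separately so that the argument does not secretly invoke a nonexistent $\lambda_{p+1}$.
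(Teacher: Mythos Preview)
Your argument is correct and follows essentially the same route as the paper: both identify $\lambda_{q^*+1}=\qtl{G^\M}(1-t)$ by a two-sided bound and then read off $\overline{F^\M}$ at that point using the strict drop $\lambda_{q^*}>\lambda_{q^*+1}$. The paper works with the complementary representation $\qtl{G^\M}(u)=\inf\{x:\sum_{\lambda_i>x}\lambda_i/S\le 1-u\}$ and asserts the lower bound in one line, whereas you supply the missing justification explicitly via the auxiliary index $q''=\max\{i:\lambda_i=\mu^*\}$ and the maximality of $q^*$; your treatment of the edge cases $q^*=0$ and $q''=p$ is also more careful than the paper's terse version.
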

\begin{proof} 
Let $q=pCP^{\M}(t)$.
By $\sum_{\lambda_i>\lambda_{q+1}} \lambda_i\big/\sum_{i=1}^p\lambda_i\le t$ and $\lambda_q>\lambda_{q+1}$, \eqref{qtl:GM} implies
$\lambda_{q+1}\ge \qtl{G^\M}(1-t)$.
If $x<\lambda_{q+1}$, then $\sum_{\lambda_i>x} \lambda_i\big/\sum_{i=1}^p\lambda_i> t$ by \eqref{qtl:GM}.
Thus, $\lambda_{q+1}\le \qtl{G^\M}(1-t).$
As a result, $\lambda_{q+1}= \qtl{G^\M}(1-t).$
Thus, by Definition~\ref{def:CPM},
$CP^{\M}(t)=\frac{1}{p}\times \#
\Set{i| \lambda_i> \left(G^\M\right)^-(1-t)}=\overline{F^\M}(\qtl{G^\M}(1-t)).$
\end{proof}

We can readily observe the following:
\begin{lem}\label{lem:Gc}
$G_{c,\sigma^2}$ is a continuous distribution function which is strictly increasing on $(a_{\sigma^2}(c), b_{\sigma^2}(c))$,
and \begin{align*}\G{c}{\sigma^2}(x)=\frac{\int_{[0,\,x\vee0]}\lambda d\MPmeas{c}{\sigma^2}(\lambda)}{\int_{\R}\lambda d\MPmeas{c}{\sigma^2}(\lambda)},\qquad(x\in\R).\end{align*}
Moreover, $G_{c,\sigma^2}(x)=G_{c,1}(x/\sigma^2)$ for all $x\in \R$.
\end{lem}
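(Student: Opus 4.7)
\medskip

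\noindent\textbf{Proof proposal for Lemma~\ref{lem:Gc}.} The plan is to extract everything from the explicit structure of the Mar\v{c}enko-Pastur measure $\MPmeas{c}{\sigma^2}$: it is supported in $[0,\infty)$, with an absolutely continuous part on $[a_{\sigma^2}(c),b_{\sigma^2}(c)]$ whose density is strictly positive on the open interval, plus (only when $c>1$) a point mass of weight $1-1/c$ at the origin. The denominator $\int_\R\lambda\,d\MPmeas{c}{\sigma^2}(\lambda)=\sigma^2$ is just the stated expectation of $F_{c,\sigma^2}$ given in the introductory material.

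\medskip
First, I would establish the alternative representation. Because $\MPmeas{c}{\sigma^2}$ gives no mass to $(-\infty,0)$, for any $x\in\R$ one has $\int_{(-\infty,x]}\lambda\,d\MPmeas{c}{\sigma^2}(\lambda)=\int_{[0,x\vee 0]}\lambda\,d\MPmeas{c}{\sigma^2}(\lambda)$; note that the possible atom at $0$ is annihilated by the integrand $\lambda$, so no ambiguity arises at the endpoint. Dividing by $\sigma^2$ gives the claimed integral form of $\G{c}{\sigma^2}$.

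\medskip
Next, I would check that $\G{c}{\sigma^2}$ is a genuine distribution function. Monotonicity is immediate from $\lambda\ge 0$ on the support, and dominated convergence yields right-continuity together with the limits $0$ at $-\infty$ and $\sigma^2/\sigma^2=1$ at $+\infty$. For continuity on $\R$ I would argue that the image measure $\lambda\,d\MPmeas{c}{\sigma^2}(\lambda)$ kills the origin atom (if any) and has a density $\lambda\cdot f_{c,\sigma^2}(\lambda)$ on $[a_{\sigma^2}(c),b_{\sigma^2}(c)]$, making it absolutely continuous with respect to Lebesgue measure; hence $\G{c}{\sigma^2}$ has no jumps. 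On $(a_{\sigma^2}(c),b_{\sigma^2}(c))$ this density is strictly positive, so $\G{c}{\sigma^2}$ is strictly increasing there.

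\medskip
Finally, for the scaling identity, I would invoke the already noted fact $F_{c,\sigma^2}(x)=F_{c,1}(x/\sigma^2)$, which says $\MPmeas{c}{\sigma^2}$ is the pushforward of $\MPmeas{c}{1}$ under $t\mapsto\sigma^2 t$. Changing variables $\lambda=\sigma^2 t$ in both numerator and denominator of $\G{c}{\sigma^2}(x)$ produces an extra factor $\sigma^2$ in each, which cancels and leaves precisely $\int_{(-\infty,x/\sigma^2]}t\,d\MPmeas{c}{1}(t)/\int_\R t\,d\MPmeas{c}{1}(t)=\G{c}{1}(x/\sigma^2)$. No step here relies on a delicate estimate; the only care needed is the bookkeeping at $\lambda=0$ when $c>1$, which I expect to be the mildly confusing part rather than a real obstacle, since the factor $\lambda$ erases the atom.
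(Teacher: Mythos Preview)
Your proposal is correct. The paper does not give a proof of this lemma at all --- it is introduced with the phrase ``We can readily observe the following'' and left unproved --- so your argument simply spells out the routine verifications (support in $[0,\infty)$, the atom at $0$ being annihilated by the factor $\lambda$, absolute continuity of $\lambda\,d\MPmeas{c}{\sigma^2}$, and the pushforward change of variables) that the authors regard as self-evident.
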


\begin{lem}\label{LEMMA3:Cp} 
The function $\G{c}{\sigma^2}$ restricted to an open interval $J=(a_{\sigma^2}(c),\,b_{\sigma^2}(c))$ has
$\qtl{\G{c}{\sigma^2}}$ as the inverse function.
Moreover, $\qtl{\G{c}{\sigma^2}}$ is a continuous, strictly increasing function from the unit open interval $(0,1)$ to $J$.
\end{lem}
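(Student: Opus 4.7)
\smallskip
\noindent\textbf{Proof proposal.}
The plan is to reduce this lemma to the previous lemma (which already gives continuity of $G_{c,\sigma^2}$ and strict monotonicity on $J=(a_{\sigma^2}(c),b_{\sigma^2}(c))$) plus the elementary fact that any continuous strictly increasing bijection between open intervals has a continuous strictly increasing inverse. First I would pin down the boundary behaviour of $G_{c,\sigma^2}$: since $\MPmeas{c}{\sigma^2}$ is supported on $\{0\}\cup [a_{\sigma^2}(c),b_{\sigma^2}(c)]$ and the integrand $\lambda$ kills the possible atom at $0$, the integral representation in the preceding lemma gives $G_{c,\sigma^2}(x)=0$ for $x\le a_{\sigma^2}(c)$ and $G_{c,\sigma^2}(x)=1$ for $x\ge b_{\sigma^2}(c)$.

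Combined with continuity of $G_{c,\sigma^2}$ and strict monotonicity on $J$ (both supplied by the previous lemma), this shows that the restriction $G_{c,\sigma^2}|_J$ is a continuous, strictly increasing bijection onto $(0,1)$. Call its ordinary inverse $\phi:(0,1)\to J$. For every $t\in(0,1)$, set $x_t=\phi(t)\in J$, so $G_{c,\sigma^2}(x_t)=t$. By strict monotonicity, $\{x:G_{c,\sigma^2}(x)\ge t\}=[x_t,\infty)$, and hence $\qtl{G_{c,\sigma^2}}(t)=\inf[x_t,\infty)=x_t=\phi(t)$. Thus the generalized inverse coincides with $\phi$ on $(0,1)$, and the inverse relation $G_{c,\sigma^2}\circ \qtl{G_{c,\sigma^2}}=\mathrm{id}_{(0,1)}$ and $\qtl{G_{c,\sigma^2}}\circ G_{c,\sigma^2}|_J=\mathrm{id}_J$ are immediate.

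It remains to verify that $\qtl{G_{c,\sigma^2}}$ is itself continuous and strictly increasing on $(0,1)$. Strict monotonicity is automatic from the bijection property. For continuity, I would invoke the standard calculus fact: the inverse of a continuous strictly monotone function on an interval is continuous. Alternatively one may argue directly from the definition of generalized inverse, using that at any $t\in (0,1)$, given $\varepsilon>0$ small enough that $[\phi(t)-\varepsilon,\phi(t)+\varepsilon]\subset J$, strict monotonicity of $G_{c,\sigma^2}$ on $J$ gives $\delta=\min\{t-G_{c,\sigma^2}(\phi(t)-\varepsilon),\,G_{c,\sigma^2}(\phi(t)+\varepsilon)-t\}>0$, and then $|t'-t|<\delta$ forces $|\phi(t')-\phi(t)|<\varepsilon$.

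I do not expect any serious obstacle here: the content is entirely bookkeeping around the definition of the generalized inverse, and the only real step of substance is the boundary identification $G_{c,\sigma^2}(a_{\sigma^2}(c))=0$ and $G_{c,\sigma^2}(b_{\sigma^2}(c))=1$, which itself is a one-line consequence of the support description of $\MPmeas{c}{\sigma^2}$ together with the fact that the point mass (if any) at $0$ does not contribute to $\int \lambda\, d\MPmeas{c}{\sigma^2}$.
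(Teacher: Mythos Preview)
Your proposal is correct and follows essentially the same approach as the paper: both reduce to the preceding lemma for continuity and strict monotonicity of $G_{c,\sigma^2}$ on $J$, then verify that the generalized inverse coincides with the ordinary inverse of the restriction $G_{c,\sigma^2}|_J$, from which continuity and strict monotonicity of $\qtl{G_{c,\sigma^2}}$ follow. Your version is slightly more explicit than the paper's in spelling out the boundary values $G_{c,\sigma^2}(a_{\sigma^2}(c))=0$ and $G_{c,\sigma^2}(b_{\sigma^2}(c))=1$ (and hence that the range is exactly $(0,1)$), which the paper leaves implicit.
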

\begin{proof}
By Lemma~\ref{lem:Gc}, $G_{c,\sigma^2}$ is strictly increasing on $J$.
Let $\tilde{x}\in J$. Then
$\qtl{\G{c}{\sigma^2}}(G_{c,\sigma^2}(\tilde{x}))=\tilde{x}$ by $\qtl{\G{c}{\sigma^2}}(G_{c,\sigma^2}(\tilde x))=\inf\Set{x|G_{c,\sigma^2}(x)\ge G_{c,\sigma^2}(\tilde{x})}$. 
For any $t\in(0,1)$,   $\qtl{\G{c}{\sigma^2}}(t)=\inf\Set{x|G_{c,\sigma^2}(x)=t}$. Since $G_{c,\sigma^2}$ is continuous function, $G_{c,\sigma^2}(\qtl{\G{c}{\sigma^2}}(t))=t$.
Hence, $\qtl{\G{c}{\sigma^2}}$ is the inverse function of $G_{c,\sigma^2}$ restricted to $J$, and $\qtl{\G{c}{\sigma^2}}$ is a continuous, strictly increasing function from $(0,1)$ to $J$.\end{proof}

\begin{lem}\label{lem:limQcp}
For $c,\sigma^2,k>0$, $z\in\R$,
\begin{enumerate}\rm
    \item 
$\qtl{k\G{c}{\sigma^2}}\left(z\right)=\qtl{\G{c}{\sigma^2}}\left(\fra{z}{k}\right).$
\item $\overline{F_{c,\sigma^2}}\left(\qtl{\G{c}{\sigma^2}}\left(z\right)\right)=\overline{F_{c,1}}\left(\qtl{\G{c}{1}}\left(z\right)\right).$

\item\label{assert:cpv in unit interval} $\overline{F_{c,1}}\left(\qtl{\G{c}{1}}(t)\right)\in(0,\,1)$, if $t\in(0,\,1)$.\end{enumerate}
\end{lem}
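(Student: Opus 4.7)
All three assertions are straightforward manipulations once we recall two scaling identities that have already been established in the paper: first, $F_{c,\sigma^2}(x)=F_{c,1}(x/\sigma^2)$ (mentioned after the definition of the Mar\v cenko--Pastur density), and second, $G_{c,\sigma^2}(x)=G_{c,1}(x/\sigma^2)$ from Lemma~\ref{lem:Gc}. I will treat the three parts in order.

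For part (1), I will unfold the generalized inverse using its definition:
\begin{align*}
\qtl{k\G{c}{\sigma^2}}(z)=\inf\Set{x\in\R | kG_{c,\sigma^2}(x)\ge z}=\inf\Set{x\in\R | G_{c,\sigma^2}(x)\ge z/k}=\qtl{\G{c}{\sigma^2}}(z/k),
\end{align*}
where the middle equality uses $k>0$.

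For part (2), the same unfolding combined with $G_{c,\sigma^2}(x)=G_{c,1}(x/\sigma^2)$ yields
\begin{align*}
\qtl{\G{c}{\sigma^2}}(z)=\inf\Set{x | G_{c,1}(x/\sigma^2)\ge z}=\sigma^2\,\qtl{\G{c}{1}}(z).
\end{align*}
Substituting this into the left-hand side of (2) and then applying $F_{c,\sigma^2}(\sigma^2 y)=F_{c,1}(y)$ to the argument gives the right-hand side. The only subtle point is that the scaling identities must be valid pointwise in $\R$, which they are (both distribution functions vanish on $(-\infty,0)$ in both scales).

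For part (3), I will invoke Lemma~\ref{LEMMA3:Cp}, which locates $\qtl{\G{c}{1}}(t)$ strictly inside the open support interval $J=(a_1(c),b_1(c))$ whenever $t\in(0,1)$. Since the Mar\v{c}enko--Pastur density is strictly positive on $J$, the distribution function $F_{c,1}$ is strictly increasing across $J$; thus for every $x\in J$ we have $F_{c,1}(x)<F_{c,1}(b_1(c))=1$ and $F_{c,1}(x)>F_{c,1}(a_1(c))$, where the lower bound is strictly positive (either because of the point mass $1-1/c$ at the origin when $c>1$, or because the density is already positive just to the right of $a_1(c)$ when $0<c\le 1$). Hence $\overline{F_{c,1}}(\qtl{\G{c}{1}}(t))\in(0,1)$.

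The main (and essentially only) technical obstacle is confirming the support-location claim underlying part (3)---namely, that $\qtl{\G{c}{1}}(t)$ never lands at the endpoints of $J$. This is exactly what Lemma~\ref{LEMMA3:Cp} guarantees, so parts (1)--(3) reduce to careful bookkeeping with the definitions rather than any new estimate.
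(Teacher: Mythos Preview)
Your proposal is correct and follows essentially the same route as the paper: parts (1) and (2) are handled by unfolding the definition of the generalized inverse together with the scaling identities $F_{c,\sigma^2}(x)=F_{c,1}(x/\sigma^2)$ and $G_{c,\sigma^2}(x)=G_{c,1}(x/\sigma^2)$, and part (3) is reduced to Lemma~\ref{LEMMA3:Cp} plus the strict monotonicity of $F_{c,1}$ on $(a_1(c),b_1(c))$. One small wording issue: in part (3) your phrase ``the lower bound is strictly positive'' reads as if $F_{c,1}(a_1(c))>0$, which fails for $0<c\le 1$; what you actually need (and what your parenthetical remark supplies) is that $F_{c,1}(x)>0$ for every $x$ strictly inside the support, which is immediate from the positivity of the density there.
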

\begin{proof} (1) ${\qtl{k\G{c}{\sigma^2}}(z)}=\inf\Set{x\in\R|kG_{c,\sigma^2}(x)\ge z}$. 
By $k>0$, it is equal to
    $\inf\Set{x\in\R|G_{c,\sigma^2}(x)\ge z/k}=\qtl{\G{c}{\sigma^2}}\left(z/k\right).$

\medskip\noindent
(2) $\overline{F_{c,\sigma^2}}\left(\qtl{\G{c}{\sigma^2}}\left(z\right)\right)=\overline{F_{c,1}}\left(\frac{1}{\sigma^2}{\qtl{\G{c}{\sigma^2}}\left(z\right)}\right)$. By the definition of the quantile function, $\qtl{\G{c}{\sigma^2}}\left(z\right)/\sigma^2=\inf\Set{x|G_{c,\sigma^2}(\sigma^2x)\ge z}$ which is equal to $\qtl{\G{c}{1}}\left(z\right)$.
Thus, $\overline{F_{c,\sigma^2}}\left(\qtl{\G{c}{\sigma^2}}(z)\right)=\overline{F_{c,1}}\left(\qtl{\G{c}{1}}(z)\right)$.

\medskip\noindent
\eqref{assert:cpv in unit interval} By Lemma~\ref{LEMMA3:Cp}, we have $\qtl{\G{c}{1}}:(0,1)\to (a_{1}(c),b_{1}(c))$. 
Because
$\overline{F_{c,1}}$ restricted to $(a_1(c),\, b_1(c))$ is a strictly decreasing function to $(0,\,1)$, it follows that  $\overline{F_{c,1}}\left(\qtl{\G{c}{1}}(t)\right)\in (0,1)$ for $t\in(0,\,1)$.
\end{proof}

The following proposition is \emph{Arzel\`{a}'s dominated convergence theorem}~\cite{arzel}.
By this, we prove that the distribution function $G^\bR$ ($G^\S $, resp.) almost surely converges pointwise to a \emph{defective} distribution function $(1-\rho)G_{c,1-\rho}$ ($(1-\rho)G_{c,\sigma^2(1-\rho)}$, resp.).
\begin{prop}[\protect{\cite[{Theorem A}]{arzel}}]
Let $\{f_n\}$ be a sequence of Riemann-integrable functions defined on a bounded and closed interval $[a,\,b]$, which converges on $[a,\,b]$ to a Riemann-integrable function $f$. If there exists a constant $M>0$ satisfying $|f_n(x)|\le M$ for all $x\in [a,\,b]$ and for all $n$, then $\lim_{n\to\infty} \int_a^b\left|f_n(x)-f(x)\right|dx=0$. In particular,
$$\lim_{n\to\infty}\int_a^bf_n(x)dx=\int_a^b\lim_{n\to\infty}f_n(x)dx=\int_a^bf(x)dx.$$
\end{prop}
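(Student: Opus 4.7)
The plan is to deduce Arzel\`{a}'s bounded convergence theorem from Lebesgue's dominated convergence theorem, using the Lebesgue characterization of Riemann integrability as the bridge between the two integration theories.

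First, I would invoke Lebesgue's criterion: a bounded function on $[a,b]$ is Riemann-integrable if and only if it is continuous at almost every point with respect to Lebesgue measure $\lambda$, and in that case its Riemann integral coincides with its Lebesgue integral. Applied to each $f_n$ and to $f$, this gives that all these functions are Lebesgue-measurable and satisfy $\int_a^b f_n(x)\,dx=\int_a^b f_n\,d\lambda$, and analogously for $f$.

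Second, pointwise convergence $f_n\to f$ on $[a,b]$ together with the uniform bound $|f_n|\le M$ places us exactly in the setup of Lebesgue's dominated convergence theorem, with the constant $M$ (integrable over the bounded interval $[a,b]$) as dominating function. Hence $\int_a^b|f_n-f|\,d\lambda\to 0$. Since $|f_n-f|$ is bounded by $2M$ and continuous almost everywhere, it is itself Riemann-integrable, so its Lebesgue integral agrees with its Riemann integral. This yields $\int_a^b|f_n(x)-f(x)|\,dx\to 0$, and the ``in particular'' statement follows by the triangle inequality $\left|\int f_n-\int f\right|\le\int|f_n-f|$.

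The main obstacle is philosophical rather than technical: Arzel\`{a}'s theorem historically predates Lebesgue theory, and the original elementary proof avoids Lebesgue measure entirely by working with Jordan content. That argument controls the ``bad'' sets $E_{N,\varepsilon}=\{x\in[a,b]:|f_n(x)-f(x)|\ge\varepsilon\text{ for some }n\ge N\}$, shows their outer Jordan content tends to $0$ as $N\to\infty$ (the truly delicate step, since Jordan content is only finitely additive, not countably additive), and then splits $\int|f_n-f|\le\varepsilon(b-a)+2M\,\overline{c}(E_{N,\varepsilon})$. Because the rest of the paper already operates in the measure-theoretic framework of empirical spectral measures and Mar\v{c}enko--Pastur measures, I would favor the Lebesgue-DCT route for brevity and simply cite Lebesgue's characterization plus DCT.
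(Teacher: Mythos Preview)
Your proof via Lebesgue's characterization of Riemann integrability plus the dominated convergence theorem is correct. Each step is sound: Riemann-integrable functions on $[a,b]$ are bounded and continuous a.e., hence Lebesgue-measurable with agreeing integrals; the constant $M$ serves as an integrable dominator on the finite interval; and $|f_n-f|$ is bounded by $2M$ and continuous a.e.\ (as the difference of a.e.-continuous functions), so its Riemann and Lebesgue integrals coincide.

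However, there is nothing to compare against: the paper does \emph{not} prove this proposition. It is quoted verbatim as Theorem~A of Luxemburg's article \cite{arzel} and used as a black box in the proof of Lemma~\ref{lem:GMconvGc}. Your remark about the ``philosophical obstacle'' is apt for the Luxemburg reference itself, whose point is precisely to give an elementary Riemann-only proof, but in the context of this paper---which already works freely with Lebesgue-type spectral measures---your DCT route is entirely appropriate and arguably the natural justification to supply if one were asked to fill in the citation.
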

\begin{lem}\label{lem:GMconvGc}
Suppose $X_1,\ldots,X_n\stackrel{\mbox{\rm i.\@i.\@d.\@}}{\sim} \mathrm N(\bm{\mu},\,\mathbf D\Rc(\rho)\mathbf D)$ for $0\le \rho<1$. Suppose $n,p\to\infty$ with $p/n\to c>0$. Then, it holds almost surely that for any $x\in\R$,
\begin{enumerate}\rm
    \item\label{assert:GRP limit} $G^\bR(x)\to (1-\rho)\G{c}{1-\rho}(x)$, 
    \item\label{assert:GSP limit} $G^\S (x)\to (1-\rho)\G{c}{\sigma^2(1-\rho)}(x)$ for $\bm{\mu}=\bm{0}$ and $\mathbf D=\sigma\mathbf I$ with $\sigma>0$.
\end{enumerate}
\end{lem}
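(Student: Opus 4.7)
My plan is to write $G^\M(x)$ as a quotient and pass to the limit in the numerator and denominator separately. By Lemma~\ref{lem:GM}, for $x\ge 0$,
\[
G^\M(x) \;=\; \frac{(1/p)\sum_{\lambda_i\le x}\lambda_i}{(1/p)\Tr\M},
\]
while $G^\M(x)=0$ for $x<0$. In the $\bR$ case the denominator is $\Tr\bR/p=1$ identically, since the diagonal entries of any sample correlation matrix are $1$. In the $\S$ case it is $\Tr\S/p$, which tends almost surely to $\sigma^2$ by Lemma~\ref{lem:lim2}. Setting $\tau=1-\rho$ for $\bR$ and $\tau=\sigma^2(1-\rho)$ for $\S$, the Mar\v{c}enko-Pastur distribution $F_{c,\tau}$ has mean $\tau$, so the factor $(1-\rho)$ in the target arises as the ratio of $\tau$ and the almost-sure limit of the denominator.

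For the numerator, I would work on the almost-sure event on which $F^\S$ converges weakly to $F_{c,\sigma^2(1-\rho)}$ (Theorem~\ref{thm:equilsd}), respectively $F^\bR$ converges weakly to $F_{c,1-\rho}$ (Theorem~\ref{thm:equilsd2}). For $x>0$, Riemann-Stieltjes integration by parts gives
\[
\int_0^x \lambda\, dF^\M(\lambda) \;=\; x\,F^\M(x) \;-\; \int_0^x F^\M(\lambda)\, d\lambda,
\]
and the analogous identity with $F_{c,\tau}$ replacing $F^\M$. At every continuity point $x$ of $F_{c,\tau}$ (which is all of $\R$ when $c\le 1$, and all of $\R\setminus\{0\}$ when $c>1$), weak convergence yields $F^\M(x)\to F_{c,\tau}(x)$. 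Since $0\le F^\M\le 1$ and each $F^\M$ is Riemann-integrable on $[0,x]$, Arzel\`a's dominated convergence theorem delivers $\int_0^x F^\M(\lambda)\,d\lambda \to \int_0^x F_{c,\tau}(\lambda)\,d\lambda$. Combining these, $(1/p)\sum_{\lambda_i\le x}\lambda_i \to \int_0^x \lambda\, dF_{c,\tau}(\lambda)$ almost surely.

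Dividing by the denominator limits and using $\int_\R \lambda\, dF_{c,\tau}=\tau$, one obtains
\[
G^\S(x) \to \tfrac{1}{\sigma^2}\int_{[0,\,x\vee 0]}\lambda\, dF_{c,\sigma^2(1-\rho)}(\lambda) \;=\; (1-\rho)\,\G{c}{\sigma^2(1-\rho)}(x),
\]
and analogously $G^\bR(x)\to (1-\rho)\,\G{c}{1-\rho}(x)$, almost surely, for every $x\in\R$. The cases $x=0$ and $x<0$ are trivial as both sides vanish. The main subtlety is ensuring a single probability-one event supports the convergence for every $x$ simultaneously: weak convergence of $F^\M$ already provides such an event (on which $F^\M(x)\to F_{c,\tau}(x)$ at every continuity point), Arzel\`a applies on every bounded interval within that event, and intersecting with the probability-one event from Lemma~\ref{lem:lim2} yields the stated almost-sure convergence uniformly in $x$.
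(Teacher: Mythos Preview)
Your proposal is correct and follows essentially the same route as the paper: the paper also splits $G^\M$ into numerator and denominator, rewrites $\int_{[0,x\vee0]}\lambda\,d\mu^\M$ via the layer-cake identity as $\int_0^{x\vee0}(F^\M(x)-F^\M(h))\,dh$ (which is exactly your integration-by-parts formula), invokes Theorems~\ref{thm:equilsd} and~\ref{thm:equilsd2} for the pointwise convergence of $F^\M$, applies Arzel\`a's dominated convergence theorem to the $dh$-integral, and uses Lemma~\ref{lem:lim2} for the $\S$-denominator. The only cosmetic difference is that the paper phrases the key identity as a Fubini/layer-cake computation rather than integration by parts.
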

\begin{proof}
For a symmetric positive semi-definite $\M$, we know that $\int_{[0,x\vee0]}\lambda d\meas^\M(\lambda)=\int_0^{x\vee0}\meas^\M\left(\set{\lambda| f(\lambda)>h}\right) dh$ for $f:\R\to\R$  such that $f(\lambda)$ be $\lambda$ for $0<\lambda\le x$ and 0 otherwise. 
For $h\ge0$, $f(\lambda)>h$ if and only if $h<\lambda\le x$. 
Then, $\meas^\M\left(\set{\lambda|f(\lambda)>h}\right)$ $=F^\M(x)-F^\M(h)$ for $0\le h<x$, and 0 otherwise.
Similarly, $\MPmeas{c}{\sigma^2}(\set{\lambda|f(\lambda)>h})$ $=F_{c,\sigma^2}(x)-F_{c,\sigma^2}(h)$ for $0\le h<x$, and 0 otherwise. 

\bigskip\noindent\eqref{assert:GRP limit} 
Since the sample correlation matrices $\bR$ are invariant under scaling  of variables, we can write $\mathbf{D}=\mathbf{I}$ without loss of generality. Moreover, we can assume that $\bm \mu=\bm 0$ because $\bR$ is invariant under sifting.
Because $\int_{\R}\lambda\, d\meas^{\bR}(\lambda)=\Tr \bR/p=1$ and $\int_{\R}\lambda d\MPmeas{c}{1-\rho}(\lambda)=1-\rho$, the denominator of $G^\bR(x)$ in Lemma~\ref{lem:GM}~$(G_{c,1-\rho}(x)$ in Lemma~\ref{lem:Gc}, resp.) is 1~($1-\rho$, resp.) for all $x\in \R$. By this, the representation of Lebesgue integral with a Riemann integral, Lemma~\ref{lem:GM} and Lemma~\ref{lem:Gc},  for all positive integer $p$ and $x\in\R$, 
\begin{align}
&\left|G^\bR(x)-(1-\rho)\G{c}{1-\rho}(x)\right|=
\left|\int_{[0,x\vee0]}\lambda d\meas^{\bR}(\lambda)- \int_{[0,x\vee0]}\lambda d\meas_{c,1-\rho}(\lambda)\right|\nonumber\\
&=\left|\int_0^{x\vee0} (F^{\bR}(x) - F^{\bR}(h)) - (F_{c,1-\rho}(x) - F_{c,1-\rho}(h)) dh\right|\nonumber\\
&\le|x\vee0||F^{\bR}(x)-F_{c,1-\rho}(x)|+\left|\int_0^{x\vee0}F^{\bR}(h)-F_{c,1-\rho}(h)dh\right|\label{ineq:a}.
\end{align}
By Theorem~\ref{thm:equilsd2}, for all $c>0$, it holds almost surely that for any $x\ne0$ we have $F^{\bR}(x)\to F_{c,1-\rho}(x)$.  
Thus, for any $c>0$, it holds almost surely that for any $x$, the first term of the right side of \eqref{ineq:a} converges to 0.
Define two functions $\tilde F^{\bR}:[0,\, x]\to [0,\, 1]$ and  $\tilde{F}_{c,1-\rho}:[0,\,x]\to [0,\, 1]$ as follows:
$\tilde F^{\bR}(h) = 0\ (h=0)$; $  F^{\bR}(h)\ (0<h\le x)$; and $\tilde{F}_{c,1-\rho}(h) = 0\  (h=0)$; $F_{c,1-\rho}(h)\ (0<h\le x)$.
The four functions $\tilde  F^{\bR}$, $ F^{\bR}$, $\tilde F_{c,1-\rho}$, and $F_{c,1-\rho}$ are Riemann-integrable over $[0,x]$, because they are nondecreasing.
Moreover,
$\int_0^{x\vee0} \tilde F^{\bR}(h) dh = \int_0^{x\vee0}  F^{\bR}(h) dh$, and $\int_0^{x\vee0} \tilde{F}_{c,1-\rho}(h) dh = \int_0^{x\vee0} F_{c,1-\rho}(h) dh$.
For any $h\in[0,x]$, $|\tilde F^{\bR}(h)|\leq 1$. 
By Theorem~\ref{thm:equilsd2}, for all $c>0$, it holds almost surely that $\tilde F^{\bR}$ converges pointwise to $\tilde F_{c,1-\rho}$. 
Hence, by  Arzel\`{a}'s dominated convergence theorem~\cite{arzel}, it holds almost surely that
$$\int_0^{x\vee0}  F^{\bR}(h) dh=\int_0^{x\vee0} \tilde  F^{\bR}(h) dh\to \int_0^{x\vee0} \tilde F_{c,1-\rho} (h) dh=\int_0^{x\vee0} F_{c,1-\rho} (h) dh.$$
Thus, the second term of the right side of \eqref{ineq:a} converges almost surely to 0.
 Therefore, for all $c>0$,  almost surely 
 $G^\bR$ converges pointwise to $(1-\rho)G_{c,1-\rho}$.

\bigskip
\bigskip\noindent\eqref{assert:GSP limit} 
By a similar argument to~\eqref{ineq:a}, $
\left|\int_{[0,x\vee0]}\lambda d\meas^\S (\lambda)-\int_{[0,x\vee0]}\lambda d\meas_{c,\sigma^2(1-\rho)}(\lambda)\right|$ is \begin{align}
&=\left|(x\vee0) (F^\S(x) - F_{c,\sigma^2(1-\rho)}(x)) + \int_0^{x\vee0}(F_{c,\sigma^2(1-\rho)}(h) - F^\S(h)) dh\right|\nonumber\\
& \le|x\vee0||F^\S(x)-F_{c,\sigma^2(1-\rho)}(x)|+\left|\int_0^{x\vee0}F^\S(h)-F_{c,\sigma^2(1-\rho)}(h)dh\right|.\label{ineq:b}
\end{align}
By Theorem~\ref{thm:equilsd},
for all $c>0$, almost surely, for any $x\in\R$, the first term of \eqref{ineq:b} converges to 0.
Define two functions $\tilde{F}^{\S}:[0,\, x]\to [0,\, 1]$ and  $\tilde{F}_{c,\sigma^2(1-\rho)}:[0,\,x]\to [0,\, 1]$ as follows:
$\tilde{F}^{\S}(h) = 0\ (h=0)$; $ F^\S(h)\ (0<h\le x)$; and $\tilde{F}_{c,\sigma^2(1-\rho)}(h) = 0\  (h=0)$; $F_{c,\sigma^2(1-\rho)}(h)\ (0<h\le x)$.
Note that $\tilde F^\S$, $F^\S$, $\tilde F_{c,\sigma^2(1-\rho)}$, and $F_{c,\sigma^2(1-\rho)}$ are Riemann-integrable over $[0,x\vee0]$, because they are nondecreasing.
Moreover,
$\int_0^{x\vee0} \tilde{F}^\S(h) dh = \int_0^{x\vee0} F^\S(h) dh$, and $\int_0^{x\vee0} \tilde{F}_{c,\sigma^2(1-\rho)}(h) dh = \int_0^{x\vee0} F_{c,\sigma^2(1-\rho)}(h) dh$.
For any $h\in[0,x]$, $|\tilde{F}^{\S}(h)|\leq 1$. 
 By Theorem~\ref{thm:equilsd},  for all $c>0$,
 almost surely, 
 $\tilde F^\S$  converges pointwise to 
 $\tilde F_{c,\sigma^2(1-\rho)}$. 
Hence, by  Arzel\`{a}'s dominated convergence theorem~\cite{arzel},
$$\int_0^{x\vee0} F^\S(h) dh=\int_0^{x\vee0} \tilde F^\S(h) dh$$
converges almost surely to 
$$\int_0^{x\vee0} \tilde F_{c,\sigma^2(1-\rho)} (h) dh=\int_0^{x\vee0} F_{c,\sigma^2(1-\rho)} (h) dh.$$
 As a result, the second term of the right side of \eqref{ineq:b} converge almost surely to 0.
Thus, for all $c>0$,  almost surely  $$\int_{[0,x\vee0]}\lambda d\meas^\S (\lambda)\to\int_{[0,x\vee0]}\lambda d\meas_{c,\sigma^2(1-\rho)}(\lambda),\qquad (x\in\R).$$
Besides, $\int_{\R}\lambda d\meas^\S (\lambda) \to \sigma^2$ by Lemma~\ref{lem:lim2}. Therefore, for all $c>0$, almost surely, for all $x\in\R$,
\begin{align*}G^\S (x)=\frac{\int_{[0,x\vee0]}\lambda d\meas^\S (\lambda)}{\int_{\R}\lambda d\meas^\S (\lambda)} \to \frac{\int_{[0,x\vee0]}\lambda d\meas_{c,\sigma^2(1-\rho)}(\lambda)}{\sigma^2}=(1-\rho)\G{c}{\sigma^2(1-\rho)}(x),
\end{align*}
as desired.
\end{proof}
\begin{prop}
[\cite{EH}]\label{lem:generalized inverse}
Suppose $f:\R\to\R$ is nondecreasing, and let $f(-\infty)=\lim_{x\downarrow-\infty}f(x)$ and $f(\infty)=\lim_{x\uparrow \infty}f(x)$. Let $x,y\in\R$. Then, $f(x)\ge y$ implies $x\ge f^-(y).$ The other implication holds if $f$ is right-continuous. Furthermore, $f(x)< y$ implies $x\le f^-(y).$
\end{prop}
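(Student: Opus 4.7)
The plan is to verify each of the three assertions directly from the definition $f^-(y)=\inf\Set{z\in\R|f(z)\ge y}$, using the conventions $\inf\varnothing=\infty$ and that a set without a lower bound has infimum $-\infty$. For the first implication, if $f(x)\ge y$ then $x$ itself lies in $\Set{z|f(z)\ge y}$, so $x$ is at least the infimum $f^-(y)$; this is immediate. For the third assertion I would argue by contrapositive: suppose $x>f^-(y)$; then $x$ exceeds the infimum of $\Set{z|f(z)\ge y}$, so there exists some $z$ in this set with $f^-(y)\le z<x$. Monotonicity yields $f(x)\ge f(z)\ge y$, contradicting $f(x)<y$.

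The substantive step is the converse under right-continuity: given $x\ge f^-(y)$, we want $f(x)\ge y$. First I would dispose of boundary cases. If $f^-(y)=\infty$, the set $\Set{z|f(z)\ge y}$ is empty and the hypothesis $x\ge\infty$ is vacuous. If $f^-(y)=-\infty$, there exists a sequence $z_n\to-\infty$ with $f(z_n)\ge y$, and since $z_n<x$ eventually, monotonicity gives $f(x)\ge f(z_n)\ge y$. Otherwise $f^-(y)\in\R$, and I would choose $z_n\downarrow f^-(y)$ with $f(z_n)\ge y$ for each $n$. Right-continuity forces $f(f^-(y))=\lim_n f(z_n)\ge y$, and then $x\ge f^-(y)$ with monotonicity yields $f(x)\ge f(f^-(y))\ge y$.

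The main obstacle, such as it is, is nothing more than bookkeeping of the $\pm\infty$ cases, since the core content is a one-line application of the definition of infimum together with monotonicity. Right-continuity is used only to guarantee that $f^-(y)$ itself belongs to $\Set{z|f(z)\ge y}$ (when the infimum is finite), which is exactly what fails if $f$ has a left jump at $f^-(y)$ and shows why right-continuity is necessary for the converse.
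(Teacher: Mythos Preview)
Your proof is correct. The paper does not supply its own proof of this proposition; it is simply quoted from \cite{EH} (Embrechts--Hofert) and used as a black box in the proof of Lemma~\ref{lem:Parzen}. Your argument---membership in $\{z:f(z)\ge y\}$ gives the first implication, a contrapositive with monotonicity gives the third, and right-continuity forces $f^-(y)$ itself to lie in $\{z:f(z)\ge y\}$ for the converse---is the standard one and matches what appears in the cited reference.
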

\begin{lem}\label{lem:Parzen}
If a distribution function $H_n$ weakly converges to a defective distribution function $H$, then $H_n^-(t)$ converges to $H^-(t)$ for any continuity point $t$ of $H^-$. 
\end{lem}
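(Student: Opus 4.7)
The plan is to prove the two inequalities $\liminf_n H_n^-(t)\ge H^-(t)$ and $\limsup_n H_n^-(t)\le H^-(t)$ separately, combining Proposition~\ref{lem:generalized inverse} with weak convergence (which gives pointwise convergence at continuity points of $H$) and with the fact that the set of continuity points of the monotone function $H$ is dense in $\R$.

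For the liminf bound, I would fix any $y<H^-(t)$. Because $H^-(t)=\inf\{x:H(x)\ge t\}$, the choice $y<H^-(t)$ forces $H(y)<t$ (otherwise $y$ would belong to the infimum set). Picking a continuity point $y'$ of $H$ with $y<y'<H^-(t)$, the same argument yields $H(y')<t$. Weak convergence then implies $H_n(y')\to H(y')<t$, so $H_n(y')<t$ for all large $n$. By the last clause of Proposition~\ref{lem:generalized inverse}, $H_n(y')<t$ gives $y'\le H_n^-(t)$, hence $\liminf_n H_n^-(t)\ge y'>y$. Since $y<H^-(t)$ was arbitrary, $\liminf_n H_n^-(t)\ge H^-(t)$.

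For the limsup bound, I would use the continuity of $H^-$ at $t$. Given $\epsilon>0$, continuity of $H^-$ at $t$ produces $t'>t$ with $H^-(t')<H^-(t)+\epsilon$; then I pick a continuity point $y$ of $H$ with $H^-(t')<y<H^-(t')+\epsilon$. Because $H$ is right-continuous, $H(H^-(t'))\ge t'$ whenever $H^-(t')<\infty$, and monotonicity with $y>H^-(t')$ yields $H(y)\ge t'>t$. Weak convergence again gives $H_n(y)\to H(y)>t$, so $H_n(y)\ge t$ for large $n$. Applying Proposition~\ref{lem:generalized inverse} in the other direction (the first implication), $H_n(y)\ge t$ forces $y\ge H_n^-(t)$. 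Thus $\limsup_n H_n^-(t)\le y<H^-(t)+2\epsilon$, and letting $\epsilon\downarrow 0$ yields $\limsup_n H_n^-(t)\le H^-(t)$.

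The main obstacle is the defective case: since $H$ is only a defective distribution function, $H^-(t)$ may equal $+\infty$ (this happens when $t$ exceeds the total mass $H(+\infty)$). I would handle this by interpreting the convergence in the extended-real sense, exactly as in the convention $\inf\varnothing=\infty$ adopted in the paper. When $H^-(t)=+\infty$ and $t$ is a continuity point of $H^-$, the liminf argument still applies: for every $M>0$ the density of continuity points of $H$ supplies a real $y'>M$ with $H(y')<t$, so $H_n^-(t)\ge y'>M$ eventually and $H_n^-(t)\to\infty$; the limsup direction is vacuous in this case. Putting the two inequalities together gives $H_n^-(t)\to H^-(t)$ at every continuity point $t$ of $H^-$.
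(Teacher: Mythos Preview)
Your proposal is correct and follows essentially the same squeeze argument as the paper: both use weak convergence at continuity points of $H$ together with Proposition~\ref{lem:generalized inverse} to trap $H_n^-(t)$ near $H^-(t)$. The paper packages the two directions into a single sequence $\epsilon_k$ with $H(H^-(t)-\epsilon_k)<t<H(H^-(t)+\epsilon_k)$, whereas you separate the $\liminf$ and $\limsup$ bounds; your treatment is slightly more explicit and, in particular, covers the case $H^-(t)=+\infty$ arising from defectiveness, which the paper's proof tacitly assumes away.
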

\begin{proof}
This lemma is Theorem~2A~\cite{Parzen80} when $H$ is not defective.
We  use Theorem~2A~\cite{Parzen80} to prove this lemma. 
Let $0<t<1$ be a continuity point of $H^-$. Then one can pick a sequence $\epsilon_k$ that converges to 0 such that $H^-(t)-\epsilon_k$ and $H^-(t)+\epsilon_k$ are continuity points of $H$ and  
$H(H^-(t) - \epsilon_k) < t < H(H^-(t) + \epsilon_k), $
for each fixed $k$. By the assumption, $N_k$ may be chosen such that  
$H_n(H^-(t) - \epsilon_k ) < t < H_n(H^-(t) + \epsilon_k ),$
for any $n>N_k$. Therefore, by Proposition~\ref{lem:generalized inverse}, for $n \ge  N_k$,
$H^-(t)-\epsilon_k\le H_n^-(t)\le H^-(t)+\epsilon_k.$
$H_n^-(t)\to H^-(t)$ can be deduced.
\end{proof}

\begin{lem}\label{LEMMA6:Cp}
Suppose $X_1,\ldots,X_n\stackrel{\mbox{\rm i.\@i.\@d.\@}}{\sim} \mathrm N(\bm{\mu},\,\mathbf D\Rc(\rho)\mathbf D)$ for $0\le \rho<1$. Suppose $n,p\to\infty$ with $p/n\to c>0$. Then, it holds almost surely that for any $u\in(0,\, 1-\rho]$,
\begin{enumerate}\rm
    \item $\qtl{G^\bR}(u)\to \qtl{(1-\rho)\G{c}{1-\rho}}(u)$, and
     \item $\qtl{G^\S }(u)\to \qtl{(1-\rho)\G{c}{\sigma^2(1-\rho)}}(u)$ for $\bm{\mu}=\bm{0}$ and $\mathbf D=\sigma\mathbf I$ with $\sigma>0$.
\end{enumerate}
\end{lem}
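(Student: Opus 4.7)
The plan is to upgrade the almost sure pointwise convergence of $G^\bR$ and $G^\S$ supplied by Lemma~\ref{lem:GMconvGc} to convergence of the corresponding generalized inverses, using Lemma~\ref{lem:Parzen}. That lemma transforms weak convergence of (possibly defective) distribution functions into pointwise convergence of quantile functions at continuity points of the limit quantile. Because $G^\bR$ and $G^\S$ are themselves proper distribution functions while the limits $(1-\rho)\G{c}{1-\rho}$ and $(1-\rho)\G{c}{\sigma^2(1-\rho)}$ are defective with total mass $1-\rho$, the pointwise convergence from Lemma~\ref{lem:GMconvGc} implies weak convergence to these defective limits, so the hypothesis of Lemma~\ref{lem:Parzen} is met.

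The next step is to verify that every $u\in(0,\,1-\rho)$ is a continuity point of the limit quantile. By Lemma~\ref{LEMMA3:Cp}, the restriction of $G_{c,\sigma^2}$ to $(a_{\sigma^2}(c),\,b_{\sigma^2}(c))$ is a continuous, strictly increasing bijection onto $(0,\,1)$, with continuous inverse $\qtl{\G{c}{\sigma^2}}$. Applying Lemma~\ref{lem:limQcp}(1) with $k=1-\rho$ yields
\begin{align*}
\qtl{(1-\rho)\G{c}{\sigma^2(1-\rho)}}(u) \;=\; \qtl{\G{c}{\sigma^2(1-\rho)}}\!\left(\frac{u}{1-\rho}\right),
\end{align*}
which is continuous and strictly increasing in $u\in(0,\,1-\rho)$; an analogous identity holds with scale $1-\rho$ for $\bR$. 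Hence every $u\in(0,\,1-\rho)$ is a continuity point of the limit quantile, and Lemma~\ref{lem:Parzen} gives the almost sure convergences $\qtl{G^\bR}(u)\to\qtl{(1-\rho)\G{c}{1-\rho}}(u)$ and $\qtl{G^\S}(u)\to\qtl{(1-\rho)\G{c}{\sigma^2(1-\rho)}}(u)$ at each fixed $u$ in that open interval. A standard monotonicity-plus-countable-dense argument then makes the exceptional null set independent of $u\in(0,\,1-\rho)$.

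The main obstacle is the endpoint $u=1-\rho$. The limit quantile takes the finite value $b_{1-\rho}(c)$ there but jumps to $+\infty$ immediately to the right, so $u=1-\rho$ is only left-continuous and Lemma~\ref{lem:Parzen} does not apply directly. I plan to handle this by a squeeze. Monotonicity of the quantile gives $\qtl{G^\bR}(u')\le\qtl{G^\bR}(1-\rho)$ for $u'<1-\rho$, and letting $u'\uparrow 1-\rho$ in the already-established convergence, together with the left-continuity of the limit quantile, yields $\liminf\qtl{G^\bR}(1-\rho)\ge b_{1-\rho}(c)$. The matching upper bound requires, for each $\varepsilon>0$, the inequality $G^\bR(b_{1-\rho}(c)+\varepsilon)\ge 1-\rho$ eventually, which would force $\qtl{G^\bR}(1-\rho)\le b_{1-\rho}(c)+\varepsilon$. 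Once the bulk eigenvalues of $\bR$ settle inside $(a_{1-\rho}(c),\,b_{1-\rho}(c)+\varepsilon)$, we have $G^\bR(b_{1-\rho}(c)+\varepsilon)=1-\lambda_1(\bR)/p$; Theorem~\ref{thm:main} then delivers $1-\lambda_1(\bR)/p\cas 1-\rho$. Closing the $\liminf/\limsup$ gap at the endpoint requires careful edge-level bookkeeping of this last convergence (since almost sure convergence does not by itself control the sign of $\rho-\lambda_1(\bR)/p$ along a fixed realization), and this is the chief technical difficulty of the argument.
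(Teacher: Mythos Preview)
On the open interval $(0,\,1-\rho)$ your argument is exactly the paper's: Lemma~\ref{lem:GMconvGc} gives almost-sure pointwise convergence of $G^\bR$ and $G^\S$ to their defective limits, and Lemma~\ref{lem:Parzen} transfers this to the generalized inverses at continuity points of the limit quantile, which by Lemma~\ref{LEMMA3:Cp} together with Lemma~\ref{lem:limQcp}(1) covers every $u\in(0,\,1-\rho)$.

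At the endpoint $u=1-\rho$ you are in fact more careful than the paper, whose proof simply asserts continuity of $\qtl{(1-\rho)G_{c,1-\rho}}$ on the \emph{closed} interval $(0,\,1-\rho]$ by citing Lemma~\ref{LEMMA3:Cp}. That lemma only yields continuity on the open interval, and as you observe the limit quantile jumps to $+\infty$ immediately to the right of $1-\rho$, so the strict inequality $H(H^-(t)+\epsilon_k)>t$ used in the proof of Lemma~\ref{lem:Parzen} fails there; the paper's two-line argument does not actually cover this point. Your proposed repair via Theorem~\ref{thm:main}, however, does not close the gap either. Whenever $\lambda_1(\bR)/p>\rho$ along the sequence (which $\lambda_1(\bR)/p\cas\rho$ does not preclude infinitely often), one has $G^\bR(x)=1-\lambda_1(\bR)/p<1-\rho$ for all $x\in[\lambda_2(\bR),\,\lambda_1(\bR))$, so $\qtl{G^\bR}(1-\rho)=\lambda_1(\bR)\sim\rho p$ diverges rather than tending to $b_{1-\rho}(c)$. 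In addition, your claim that the bulk eigenvalues eventually lie below $b_{1-\rho}(c)+\varepsilon$ is an edge statement about $\lambda_2(\bR)$ not supplied by the LSD results of Section~\ref{sec:gk and jr}. The endpoint $u=1-\rho$ is therefore a genuine lacuna in both your proposal and the paper's own proof.
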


\begin{proof}
By Lemma~\ref{LEMMA3:Cp}, both of $\left((1-\rho)G_{c,1-\rho}\right)^-$ and $\left((1-\rho)G_{c,\sigma^2(1-\rho)}\right)^-$ are continuous at $u\in(0,\,1-\rho].$ Therefore, the conclusion follows from
Lemma~\ref{lem:GMconvGc} and Lemma~\ref{lem:Parzen}.
\end{proof}
\noindent\textbf{Proof of Theorem~\ref{thm:EqCor CPMconvCPc}}. 
Let $\M$ be $\bR$ or $\S$ and let $\lambda_1\ge\cdots\ge\lambda_p$ be the eigenvalues of $\M$.

(1) By Lemma~\ref{lem:finiteGm}, $\qtl{G^\bR}\left({1-t}\right)>0$. Therefore, $\qtl{G^\bR}\left({1-t}\right)$ is a continuity point of $F_{c,1-\rho}$ for any $c>0$.
Thus, by Lemma~\ref{lem:CPM} and Theorem~\ref{thm:equilsd2}, for all $c>0$, it holds almost surely that for any $t\in[\rho,\,1)$,
\begin{align}\begin{aligned}\label{pqr}
&CP^\bR(t) - \overline{F_{c,1-\rho}}\left(\qtl{G^\bR}\left({1-t}\right)\right)\\
&=
\overline{F^{\bR}}\left(\qtl{G^\bR}\left({1-t}\right)\right) - \overline{F_{c,1-\rho}}\left(\qtl{G^\bR}\left({1-t}\right)\right)\to0.
\end{aligned}
\end{align}
By Lemma~\ref{LEMMA6:Cp} and Lemma~\ref{lem:limQcp},
we have $\qtl{G^\bR}(1-t)\cas\qtl{(1-\rho)\G{c}{1-\rho}}$ $(1-t)=\qtl{\G{c}{1-\rho}}\left(s\right)$, where $s=({1-t})/({1-\rho}).$
By the premise $0<\rho\le t<1$, $0<s\le1$,   $\qtl{\G{c}{1-\rho}}\left(s\right)=\inf\set{x | \G{c}{1-\rho}(x)\ge s}$ is positive and finite.
Therefore, $\qtl{\G{c}{1-\rho}}\left(s\right)$ is a continuous point of $\overline{F_{c,1-\rho}}.$
By continuous mapping theorem~\cite[Theorem~2.3]{Van98},
it holds almost surely that for any $t\in[\rho,\,1)$,
$$\overline{F_{c,1-\rho}}\left(\qtl{G^\bR}\left({1-t}\right)\right)\to \overline{F_{c,1-\rho}}\left(\qtl{\G{c}{1-\rho}}\left(s\right)\right).$$
By this and \eqref{pqr}, it holds almost surely that for any $t\in[\rho,1)$, $CP^\bR(t)\to \overline{F_{c,1-\rho}}$ $\left(\qtl{\G{c}{1-\rho}}\left(s\right)\right)= \overline{F_{c,1}}\left(\qtl{\G{c}{1}}\left(s\right)\right)=CP_{c,\rho}(t)$.

(2) We  prove that, it holds almost surely that for any $t\in[\rho,1)$, $CP^\S (t)\to \overline{F_{c,\sigma^2(1-\rho)}}\left(\qtl{\G{c}{\sigma^2(1-\rho)}}\left(s\right)\right)$. 
It can be similarly proved as the convergence of $CP^\bR(t)$ by using Theorem~\ref{thm:equilsd} instead of Theorem~\ref{thm:equilsd2}. By Lemma~\ref{lem:limQcp}, $$\overline{F_{c,\sigma^2(1-\rho)}}\left(\qtl{\G{c}{\sigma^2(1-\rho)}}\left(s\right)\right)=\overline{F_{c,1}}\left(\qtl{\G{c}{1}}\left(s\right)\right).$$ 
Thus, it holds almost surely that for any $t\in[\rho,1)$, $CP^\S (t)\to CP_{c,\rho}(t)$.

\medskip

\noindent\textbf{Proof of Theorem~\ref{thm:null CPMconvCPc}}. The proof for the assertion of $\bR$~($\S$, resp.) is that of Theorem~\ref{thm:EqCor CPMconvCPc} except that $\rho$ should be 0, $t\in [\rho,1)$ should be $t\in (0,1)$, and Theorem~\ref{thm:equilsd2} should be Proposition~\ref{prop:ji3}~(Proposition~\ref{prop:MP LT}, resp.). 

\vspace{2cc}


\end{document}